\def\R{{\sl I\kern-.27em R}}
\def\N{{\sl I\kern-.27em N}}
\def\P{{\sl I\kern-.27em P}}
\def\b{{\sl I\kern-.27em b}}
\def\R{\mathbb{R}}
\def\N{\mathbb{N}}
\def\P{\mathbb{P}}
\def\max{\mathop{\rm max}\nolimits}
\def\q{ \quad}
\def\dis{ \displaystyle}
\def\gradV{\nabla}   
\def\laplac{\Delta}  
\def\uu{{\bf u}^{m+1}}
\def\umedio{\widetilde{\bf u}^{m+1}}
\def\ee{{\bf e}^{m+1}}
\def\emedio{\widetilde{\bf e}^{m+1}}
\def\deltaumedio{\delta_t  \widetilde{\bf u}^{m+1}}
\def\deltaee{\delta_t {\bf e}^{m+1}}
\def\deltaemedio{\delta_t  {\bf e}^{m+1/2}}
\def\utilde{\widetilde{{\bf u}}^{m+1}}
\def\etilde{\widetilde{{\bf e}}^{m+1}}
\newtheorem{theorem}{Theorem}
\newtheorem{lemma}[theorem]{Lemma}
\newtheorem{corollary}[theorem]{Corollary}
\newtheorem{proposition}[theorem]{Proposition}
\newtheorem{remark}[theorem]{Remark}
\newcommand{\cqfd}{\mbox{}\nolinebreak\hfill\rule{2mm}{2mm}\medskip\par}
\newenvironment{proof}[1] {\par\noindent{\bf Proof. }{#1}}{\cqfd}
\title{Optimal first-order error estimates of a fully segregation scheme
 for the Navier-Stokes equations\thanks{The authors have been partially supported by MINECO (Spain),  Grant  MTM2012--32325
 and the second author is also partially supported  by the research group
FQM-315 of  Junta de Andaluc\'{\i}a.}}
\author{F.~Guill\'en-Gonz\'alez\footnote{Departamento de
Ecuaciones Diferenciales y An\'alisis Num\'erico and IMUS. Universidad de
Sevilla. Aptdo 1160, 41080 Sevilla (Spain), email: guillen@us.es,
fax: ++ 34 5 4552898, phone: ++ 34 5 4559907.},
  M.V.~Redondo-Neble\footnote{Departamento de Matem\'aticas. Universidad de
C\'adiz. C.A.S.E.M. Pol\'{\i}gono R\'{\i}o San Pedro S/N, $11510$
Puerto Real. C\'adiz (Spain), email: victoria.redondo@uca.es,
 phone: ++ 34 5 6016058.} }
\begin{document}

\maketitle

\begin{abstract}
A first-order linear fully discrete scheme is studied for the incompressible
time-dependent Navier-Stokes equations in three-dimensional
domains. This scheme, based on an incremental pressure
projection method,  decouples each component  of the 
velocity and the pressure, solving in each time step, a linear 
convection-diffusion problem for each component of the velocity
and a Poisson-Neumann problem for the pressure.

 Using first-order \emph{inf-sup} stable $C^0$-finite elements,
 optimal error estimates of order $O(k+h)$ are deduced without
 imposing constraints on  $h$ and $k$, the mesh size and the time step, respectively. 
 
 Finally, some numerical results are presented according the  theoretical analysis, and  also comparing to other current first-order segregated schemes. 
 
 \end{abstract}
 \medskip

\noindent{\bf Subject Classification.} 35Q30, 65N15, 76D05.

\noindent {\bf Keywords:} Navier-Stokes Equations, incremental  pressure projection schemes, segregated scheme,  error estimates, finite elements.
\section*{Introduction}
Let us  consider the Navier-Stokes system, associated to the dynamics
of viscous and incompressible fluids filling a bounded domain
$\Omega\subset \R^3$ in a time interval $(0,T)$:
 $$
\left \{ \begin{array}{rcll}
  {\bf u}_t \,+\, ( {\bf u}\cdot \gradV ) {\bf u}\,
- \nu \, \Delta {\bf u}+\,
               \gradV\, p \,
              & =&
   \, {\bf f} & \hbox{ in $ \Omega \times (0,T)$,} \\
 \noalign{\smallskip}
 \nabla \cdot {\bf u} &=&0  & \hbox{ in $ \Omega \times (0,T)$,}  \\
\noalign{\smallskip}
{\bf u} &=& {\bf 0}  & \hbox{ on $ \partial\Omega \times (0,T)$}, \\
\noalign{\smallskip} {\bf u}_{|t=0}&=&{\bf u}_0  & \hbox{ in
$\Omega $}.
\end{array}\right.
\leqno{(P)}
 $$
where the unknowns are ${\bf u}: ({\bf x},t)\in \Omega \times
(0,T)\to \R^3$ the velocity field and  $ p:({\bf x},t)\in \Omega
\times (0,T)\to \R $ the  pressure, and data are $\nu>0$ the
  viscosity coefficient (which is assumed constant for simplicity) and
  ${\bf f}:\Omega \times (0,T)\to \R^3$ the external
  forces. We denote by   $\nabla $ the gradient operator
  and  $\laplac$ the Laplace operator.

We consider  a (uniform) partition of $[0,T]$ related to a fixed time step $k=T/M$:
$t_0=0,t_1=k,\dots , t_m=mk,\dots , t_M=T$. If $u=(u^m)_{m=0}^M$
is a given vector with $u^m\in X$ (a Banach space), let us to
introduce the following notation for discrete in time norms:
$$
\| u \|_{l^2(X)}=\left(k \sum_{m=0}^M \| u^m \|_X^2 \right)^{1/2}
\quad \hbox{and} \quad \| u \|_{l^\infty(X)}= \max_{m=0,\dots ,M} \|
u^m \|_X
$$
For simplicity, we will denote $H^1=H^1(\Omega)$ etc.,
$L^2(H^1)=L^2(0,T;H^1)$ etc., and ${\bf H^1}=H^1(\Omega)^3$ etc.  We will denote by $C>0$ different constants, always
independent of discrete parameters $k$ and $h$.

The numerical analysis for the Navier-Stokes problem $(P)$ has
received much attention in the last decades and many numerical
schemes are now available. The main (numerical) difficulties are: the coupling between the pressure term $\nabla p$ and the
incompressibility condition $\nabla\cdot{\bf u}=0$ and the
nonlinearity given by the convective terms $( {\bf u}\cdot \gradV ) {\bf
u}$.

Fractional-step projection methods are becoming widely used,
splitting the different operators appearing in the
problem.
The origin of these methods is generally credited to the works of
Chorin \cite{cho} and Temam \cite{teman}. They developed the well
known \emph{Chorin-Temam projection} method, which is a two-step
scheme,
computing firstly an intermediate velocity via a convection-diffusion
problem and secondly a  velocity-pressure pair via a divergence-free $L^2(\Omega)$-projection
problem. Afterwards, a modified
projection scheme (called \emph{incremental-pressure or Van-Kan
scheme}) was
developed \cite{vankan}, adding an explicit pressure term in the first
step and a pressure correction term  in the projection step. The main drawbacks of projection methods are that the
 end-of-step  velocity   does not satisfy the
 exact boundary conditions and the discrete pressure satisfies an 
 ``artificial'' Neumann boundary condition.

 Some current variants of projection methods are: rotational
pressure-correction schemes (\cite{t-m-v}, \cite{guermond-shen},
\cite{guermond-minev-shen}), velocity-correction schemes
(\cite{guermond-shen4},
\cite{guermond-shen3}),    consistent-splitting  schemes (\cite{guermond-shen-consistent},\cite{guermond-minev-shen}, \cite{shen-yang}) 
and  penalty pressure-projection schemes (\cite{jobelin}, \cite{angot-1},
\cite{angot-2}).   Other variants  can be seen in \cite{prohl} and \cite{pyo}.

The convergence of the \emph{Chorin-Temam projection} method was
proved first in \cite{teman2} for the time discrete scheme and afterwards  in
\cite{cho2} for a fully discrete finite element (FE) scheme.

 On the other hand,  error estimates for projection
methods were obtained (see \cite{shen}, \cite{shen2} for time
discrete schemes and \cite{gq} for a fully  discrete FE scheme).
 Basically,  the  Chorin-Temam  scheme has order $O(k^{1/2})$ in
$ l^{\infty}({\bf L}^2)\cap l^2({\bf H}^1)$ and  $O(k)$ in
$l^2({\bf L}^2)$ for the  velocity, and  $O(k^{1/2})$ in
$l^2(L^2)$ for the pressure. For the incremental-pressure scheme,
 these error estimates are improved in \cite{shen} and \cite{shen2} to
order $O(k)$ in $ l^{\infty}({\bf L}^2) \cap l^2({\bf H}^1)$ for the
 velocity and $O(k)$ in $l^2(L^2)$ for the
pressure (although this last estimate is proved only for the linear problem). In fact, these optimal
error estimates are extended  in
\cite{gq} to a fully discrete FE-stable scheme (see (\ref{cinfsup}) below)  under the constraint $k^2\le C\, h  $ in
 $3D$ domains or $k^2 \le \alpha \, (1+ \log (h^{-1}) ) $
 in $2D$ ones. The
 argument done in \cite{gq} is based on the direct comparison between an appropriate
spatial interpolation of the exact solution and the fully discrete
 scheme.

By the contrary, in this paper, we will obtain optimal
error estimates without imposing restrictions on $h$ and $k$  for a FE decoupled scheme different from scheme studied in \cite{gq} (which was not  decoupled because
the projection step is solved by means of a mixed velocity-pressure
formulation).  The argument used now is also different from \cite{gq}, because   the corresponding time
discrete scheme will be introduced as an intermediate problem. This 
argument has already been used in \cite{g-re-cras,g-re-NM,g-re-space} for a different splitting scheme (with decomposition of viscosity) applied to  Navier-Stokes
 equations. 

The particular property that some projection methods (without and
with  incremental pressure) can be rewritten as  segregated methods (decoupling  velocity and
pressure), was observed in \cite{rannacher,shen}.  
For a segregated fully discrete FE  scheme based on 
 the non-incremental projection method,
 the convergence and sub-optimal error estimates $O(k^{1/2}+h)$
 for the pressure  have been obtained in \cite{badia-codina}, without
imposing inf-sup condition, but
under the double constraint $\alpha \, h^2 \le  k \le \beta
\, h^2$.

 In this paper, we obtain optimal order $O(k+h)$ for the velocity and
  pressure, without imposing constraints on $h$ and $k$, for a time segregated scheme with first-order inf-sup stable FE spaces. 
   Up to  our knowledge, optimal first order  for the pressure  of a fully segregated scheme for the Navier-Stokes problem have not been proved before.

Ideas of this paper are being used  to design a segregated second order in time  scheme (\cite{cedya2013}). 

This paper is organized as follows:

 In Section 1, we study the time discrete scheme (see Algorithm~\ref{alg-time} below). Firstly, the  stability of this  scheme  is deduced,
 and we introduce the  discrete in time problems satisfied by
  errors and the regularity hypotheses that must be imposed on the exact
solution. Afterwards, we obtain $O(k)$ accuracy for
the velocity in $l^{\infty}({\bf L}^2) \cap l^2({\bf H}^1)$. As a consequence, the 
 velocity is bounded in $l^{\infty}( {\bf H^1})$. Then, we deduce $O(k)$ for the discrete in time derivative of velocities in $l^{\infty}({\bf L}^2) \cap l^2({\bf H}^1)$. Finally,
$O(k)$  for the velocity in $l^{\infty}({\bf H}^1)$ and for
the pressure in $l^{\infty}(L^2)$ hold.

  Section 2 is devoted to study the fully discrete FE scheme (see Algorithm~\ref{algo-fully} below). 
  We present the FE-stable spaces and
their approximation properties, the fully discrete segregated scheme and the problems satisfied by the errors (comparing the time discrete Algorithm~\ref{alg-time} 
with the fully discrete Algorithm~\ref{algo-fully}).
 With respect to the spatial error estimates, firstly we obtain $O(h)$
  for the velocity  in $l^{\infty}({\bf L}^2) \cap l^2({\bf H}^1)$. Then,  
the  velocity is bounded  in $l^{\infty}( {\bf H}^1)$. 
Afterwards, by using some additional estimates for
 the time discrete scheme, $O(h)$ for the discrete in time
 derivative of  velocity  in $l^{\infty}({\bf L}^2) \cap l^2({\bf H}^1)$
 is obtained.  Finally, $O(h)$ for the velocity
in $l^{\infty}({\bf H}^1)$  and for the pressure
  in $l^{\infty}(L^2)$ are deduced.

  In Section 3, some numerical simulations are presented, showing  first order
  accuracy in time for velocity and pressure. These simulations are
  also compared with  the segregated versions of the rotational,
  consistent and penalty-projection schemes.  

 Finally,  some  conclusions are given in Section 4.

In this paper, the following discrete Gronwall's lemma will be 
used (\cite[p.~369]{hr2}):
\begin{lemma}\label{GronwallD} (Discrete Gronwall inequality) \label{GronwallD}
Let $k$, $B$ and $a_m\,$, $b_m\,$, $c_m\,$, $\gamma_m$ be
nonnegative numbers. If we assume 
$$
a_{r+1}+k\sum_{m=0}^{r}b_{m}\leq k\sum_{m=0}^{r}
  \gamma_{m}a_{m}+k\sum_{m=0}^{r}c_{m}+B\qquad\forall r\geq0,
$$
then,  one has
$$
  a_{r+1}+k\sum_{m=0}^{r}b_{m}\leq\exp\left(k\sum_{m=0}^{r}
  \gamma_{m}\right)
  \left\{k\sum_{m=0}^{r}c_{m}+B\right\}\qquad\forall r\geq 0.
$$

\end{lemma}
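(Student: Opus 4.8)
The plan is to prove this by the discrete counterpart of the integrating‑factor argument for the classical Gronwall lemma: an induction on the time index, powered by the elementary convexity inequality $e^x\ge e^y+e^y(x-y)$ for $x\ge y$ (equivalently $1+t\le e^t$), used in a telescoping fashion. Fix $r\ge 0$ and abbreviate $D:=k\sum_{m=0}^{r}c_{m}+B$. Since every $c_m\ge 0$, the partial sums $k\sum_{m=0}^{j}c_m$ are nondecreasing in $j$, so for each $0\le j\le r$ the hypothesis gives $a_{j+1}+k\sum_{m=0}^{j}b_{m}\le k\sum_{m=0}^{j}\gamma_{m}a_{m}+D$. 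Dropping the nonnegative $b$‑terms, $a_{j+1}\le k\sum_{m=0}^{j}\gamma_{m}a_{m}+D$, so it suffices to control the $a_j$ in terms of the single constant $D$.

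The key claim is that $a_{j}\le\exp\!\big(k\sum_{m=0}^{j-1}\gamma_{m}\big)\,D$ for all $0\le j\le r+1$, proved by induction on $j$. The base case $a_{0}\le B\le D$ is the instance of the hypothesis at the initial step (empty sums). For the inductive step, writing $\Sigma_{m}:=k\sum_{l=0}^{m}\gamma_{l}$ (with $\Sigma_{-1}=0$) and inserting the inductive bounds into $a_{j+1}\le k\sum_{m=0}^{j}\gamma_{m}a_{m}+D$ yields $a_{j+1}\le D\big(1+\sum_{m=0}^{j}k\gamma_{m}\,e^{\Sigma_{m-1}}\big)$. Here the convexity bound $e^{\Sigma_{m}}-e^{\Sigma_{m-1}}\ge e^{\Sigma_{m-1}}(\Sigma_{m}-\Sigma_{m-1})=k\gamma_{m}\,e^{\Sigma_{m-1}}$ telescopes to $\sum_{m=0}^{j}k\gamma_{m}\,e^{\Sigma_{m-1}}\le e^{\Sigma_{j}}-1$, so $a_{j+1}\le D\,e^{\Sigma_{j}}=\exp\!\big(k\sum_{m=0}^{j}\gamma_{m}\big)\,D$, which is the claim at index $j+1$.

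To finish, run this last computation once more at $j=r$ but now retaining the nonnegative term $k\sum_{m=0}^{r}b_{m}$ on the left: the hypothesis together with the claim give $a_{r+1}+k\sum_{m=0}^{r}b_{m}\le\sum_{m=0}^{r}k\gamma_{m}\,e^{\Sigma_{m-1}}D+D\le(e^{\Sigma_{r}}-1)D+D=e^{\Sigma_{r}}D$, which is exactly the asserted inequality since $\Sigma_r=k\sum_{m=0}^{r}\gamma_m$ and $D=k\sum_{m=0}^{r}c_m+B$; as $r$ was arbitrary, it holds for all $r\ge0$. There is no serious obstacle here; the only points requiring a little care are to exploit $c_m\ge0$ so that a common constant $D$ can be factored through the induction, and to anchor the induction at the initial‑step bound $a_0\le B$ (which is implicitly part of the hypothesis).
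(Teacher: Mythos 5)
Your argument is correct and complete; note that the paper itself does not prove this lemma but simply cites it from Heywood--Rannacher \cite{hr2}, so there is no in-paper proof to compare against. Your induction, powered by the telescoping estimate $k\gamma_m e^{\Sigma_{m-1}}\le e^{\Sigma_m}-e^{\Sigma_{m-1}}$, is the standard route, and it correctly exploits the fact that in this (explicit) form of the recursion the right-hand sum stops at index $r$ while the left-hand side is $a_{r+1}$, so no smallness condition $k\gamma_m<1$ is needed (unlike the original Heywood--Rannacher version, where the top index appears on both sides). The one point you rightly single out deserves emphasis: as literally written, the hypothesis is asserted only for $r\ge 0$ and therefore never bounds $a_0$, and without some control such as $a_0\le B$ the statement is actually false (take $B=0$, $c_m=b_m=0$, $\gamma_0=1$, $a_0>0$: the hypothesis permits $a_1=k a_0>0$ while the conclusion forces $a_1\le 0$). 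Reading the ``$r=-1$'' instance with empty sums as $a_0\le B$, as you do, is the intended interpretation and is harmless in the paper's applications, where the initial errors are either zero or absorbed into $B$.
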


\section{Time discrete scheme (Algorithm~\ref{alg-time})}

The norm and inner product in $L^2 (\Omega) $ will be denoted by
$| \cdot | $ and $ \Big(\cdot, \cdot \Big) $, whereas
   the norm in $H_0^1 (\Omega) $ of the  gradient in $L^2 (\Omega) $ will be denoted
  by  $\| \cdot \| $. Any other norm in a space $X$ will be
  denoted by $\| \cdot \|_X$

Let us to  introduce  the standard Hilbert spaces in the Navier-Stokes framework:
\begin{eqnarray*}
   {\bf H}&=&\{  {\bf v} \in {\bf L}^2 (\Omega ) \ : \
  \nabla \cdot  {\bf v}  =0 \mbox{ in } \Omega,
  \   {\bf v}  \cdot {\bf n}_{\partial\Omega}=0 \},
 \\
   {\bf V}&=& \{ {\bf v} \in {\bf H}_0^1(\Omega ) \ : \
  \nabla \cdot {\bf v} =0 \mbox{ in } \Omega \},
\end{eqnarray*}
where ${\bf n}_{\partial\Omega}$ denotes the normal outwards vector to
$\partial\Omega$.

In the sequel,  the following standard skew-symmetric
form of the convective term  will be used:
$$C({\bf u},{\bf v} )=  ({\bf u} \cdot \nabla)  {\bf
  v} + \frac{1}{2} (\nabla \cdot {\bf u} )\, {\bf v}
\quad \forall \, {\bf u} \in {\bf H}_0^1,\, {\bf v}\in {\bf H}^1 ,\,
$$
 and the corresponding trilinear form
$$c({\bf u},{\bf v} ,{\bf w})=  \int_\Omega  C({\bf u},{\bf v} )\cdot {\bf
w}= \int_\Omega \Big \{ ({\bf u} \cdot \nabla ) {\bf
  v} \cdot {\bf w} + \frac{1}{2}  ( \nabla \cdot {\bf u} ) {\bf
  v}\cdot {\bf w} \Big \}, \q \ \forall \, {\bf u} \in {\bf H}_0^1,\, {\bf v}\in {\bf H}^1
,\, {\bf w} \in {\bf H}^1
$$
or equivalently
$$c({\bf u},{\bf v} ,{\bf w})= \frac{1}{2}\int_\Omega\Big\{({\bf u} \cdot \nabla ) {\bf
  v} \cdot {\bf w} - ({\bf u} \cdot \nabla ) {\bf w}\cdot {\bf v}\Big\}
  = - \int_\Omega \Big \{ ({\bf u} \cdot \nabla) {\bf
  w} \cdot {\bf v} + \frac{1}{2}  ( \nabla \cdot {\bf u} ) {\bf
  v}\cdot {\bf w} \Big \}.$$
 Previous equalities hold even in the fully discrete case, hence we can use, in the sequel, any of these three possibilities.

The trilinear form $c(\cdot , \cdot , \cdot )$ satisfies
\begin{equation}\label{antisim}
c({\bf u}, {\bf v}, {\bf v})=0 ,\qquad \forall\, {\bf u} \in {\bf
  H}_0^1 , \ \ \forall\, {\bf v} \in {\bf H}^1,
\end{equation}
$$c({\bf u}, {\bf v}, {\bf w}) \le
C \left \{
\begin{array} {l}
\| {\bf u} \| \, \| {\bf v} \|_{W^{1,3} \cap L^{\infty}} \, | {\bf w} |
\\
\| {\bf u} \|_{L^3} \, \| {\bf v} \|\, \| {\bf w} \|
\end{array} \right.
$$
where the role of ${\bf u} ,{\bf v} ,{\bf w} $ can be
interchanged, using the appropriate expression of
$c(\cdot,\cdot,\cdot)$.

For simplicity and without loss of generality, we fix the
viscosity constant $\nu =1$.

\subsection{Description of the time scheme (Algorithm~\ref{alg-time})}
Given $ ({\bf f} ^m= {\bf f}(t_m) )_{m=1}^M $, we  define an approximation $({\bf u}^m , p^m )_{m=1}^M $  of the solution $({\bf u} , p )$ of $(P)$ at time $t=t_m$, by means of  an  incremental pressure projection scheme of Van-Kan
type \cite{vankan}, splitting the nonlinearity $({\bf u}\cdot \nabla){\bf u} $ and the diffusion term $-\Delta{\bf u}$ to  the incompressibility condition $ \nabla  \cdot  {\bf u}  = 0$. Moreover, an explicit pressure term is
introduced in the convection-diffusion problem for the velocity
(Sub-step~1), with a pressure-correction in the divergence-free
projection step (Sub-step~2). See Algorithm~\ref{alg-time} for a description of the time  scheme.
\begin{algorithm}[htbp]
\begin{description}
\item[Initialization:]  Let $ p^0$ be given and to take ${\bf u}^0=\widetilde{\bf u}^0={\bf u}(0)(={\bf u}_0)$.  
\item[Step of time $m+1$:]  Let ${\bf u}^m $, $\widetilde{\bf u}^m $ and $p^m$ be given.
\begin{description}
\item[\bf Sub-step 1:]
 Find  $\utilde:\Omega \to\R^3$  solving
$$
\left \{\begin{array}{l} \displaystyle\frac{1}{k}(\utilde-{\bf
u}^{m}) + C(\widetilde {\bf u}^{m},  \utilde )
 -\laplac \utilde + \nabla p ^{m}
 =
{\bf f}^{m+1}  \q\hbox{in $\Omega$,}\\
\noalign{\smallskip}  \utilde|_{\partial\Omega} = 0 \q\hbox{on
$\partial\Omega$.}
\end{array} \right.
\leqno{(S_1)^{m+1}}
$$
\item[\bf Sub-step 2:] Find
 ${\bf u}^{m+1}:\Omega \to\R^3 $ and $p^{m+1}:\Omega \to\R$ solution of
 $$
\left \{ \begin{array}{l} \displaystyle\frac{1}{k}({\bf
u}^{m+1}-\utilde)
 +  \nabla\,( p ^{m+1}- p  ^{m}) =  0 \q\mbox{in } \Omega ,\\
\noalign{\smallskip} \nabla \cdot {\bf u}^{m+1} =0  \q\mbox{in  }
\Omega, \q \uu \cdot {\bf  n}|_{\partial \Omega}=0.
\end{array} \right.
\leqno{(S_2)^{m+1}}
$$
\end{description}
\end{description}
\caption{Time discrete algorithm} \label{alg-time}
\end{algorithm}

Notice that the  convection term has been taken in $(S_1)^{m+1}$ in the  
semi-implicit linear form $C(\widetilde {\bf u}^{m},  \utilde )$. 
On the other hand, adding $(S_1)^{m+1}$ and  $(S_2)^{m+1}$, we arrive at
$$
\left \{\begin{array}{l} \displaystyle\frac{1}{k}({\bf
u}^{m+1}-{\bf u}^{m}) +
 C(\widetilde {\bf u}^{m},  \utilde )
 -\laplac \utilde
+\nabla p^{m+1} =
{\bf f}^{m+1} \q \mbox{in  } \Omega ,\\
\noalign{\smallskip}  \quad \utilde|_{\partial \Omega }  = 0
,\quad \nabla \cdot {\bf u}^{m+1} =0 \q \mbox{in  } \Omega.
\end{array} \right.
\leqno{(S_3)^{m+1}}
$$
 In fact $(S_3)^{m+1}$ can be viewed as consistence relations, because
if $\utilde$ and ${\bf u}^{m+1}$ converge  to the same limit velocity
${\bf u}$ as $k$ goes to zero, then  taking
limits in  $(S_3)^{m+1}$, one has at least formally that ${\bf u}$ will be a solution of the exact problem $(P)$.

Now, some remarks about   Sub-step~2 are in order:
\begin{itemize}
\item Sub-step~2 can be viewed as a projection step. In fact, $\uu = P_{{\bf H}}
\utilde$ where $P_{{\bf H}} $ is the $L^2(\Omega)$-projector onto ${\bf
H}$, because $(S_2)^{m+1}$ implies in particular
$$ \Big(\uu-\utilde,{\bf u}\Big)=0 \quad \forall\, {\bf u} \in  {\bf H}.
$$
\item By using $\nabla \cdot {\bf u}^{m+1} =0$ in $
\Omega$ and  $\uu \cdot {\bf  n}|_{\partial \Omega}=0$, one has the orthogonality property 
\begin{equation}\label{L2-orthog}
    \Big( {\bf u}^{m+1} , \nabla q \Big)=0 \quad \forall\, q\in
    H^1(\Omega).
\end{equation}
\item It is well known that Sub-step 2 is equivalent to the 
following two (decoupled) problems:
\begin{enumerate}
    \item Find  $p^{m+1}:\Omega \to\R$ such that
$$\left \{
\begin{array}{c}
k\,\Delta (p^{m+1}-p^m)= \nabla \cdot
\utilde   \q\mbox{in }  \Omega  \\
\noalign{\smallskip} k\, \nabla (p^{m+1}-p^m) \cdot {\bf
n}|_{\partial \Omega} =0\q \mbox{on } \partial \Omega .
\end{array}\right.
\leqno{(S_2)_a^{m+1}}
$$
    \item  Find
 ${\bf u}^{m+1}:\Omega \to\R^3 $ as
 $$ \uu = \utilde  -k \,\nabla (p^{m+1}-p^m )  \q \mbox{in }
\Omega . \leqno{(S_2)_b^{m+1}}
$$
\end{enumerate}

\end{itemize}
\subsection{Unconditional stability and convergence
of Algorithm~\ref{alg-time}}

\begin{lemma}[Continuous dependence
  of the projection step] \label{regularidad} 
.

\noindent
 {\bf a)} (Continuous dependence with respect to ${\bf L}^2$) 
 If $\utilde$ and ${\bf u}^m \in {\bf L}^2 (\Omega )$,
 then there exists an unique
${\bf u}^{m+1} \in {\bf H}$ solution of $(S_2)^{m+1}$. Moreover,
\begin{equation}\label{continuous-dependence-a}
| \utilde |^2= | \uu | ^2 + | k\, \nabla (p^{m+1} -p^m) | ^2
\end{equation}
\begin{equation}\label{continuous-dependence-b}
|\uu - \utilde | \le | \utilde -{\bf u}^m |.
\end{equation}

\noindent
{\bf b)} (Continuous dependence with respect to ${\bf H}^1$) 
  If 
$ \utilde \in { \bf H}_0^1 ( \Omega) $ then
  ${\bf u}^{m+1}
 \in {\bf H}^1 (\Omega) \cap {\bf H} $.
 Moreover,
$$\| {\bf u}^{m+1} \| \leq C\,  \| \utilde\| .
$$
\end{lemma}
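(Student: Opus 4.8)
\medskip
\noindent\textbf{Proof plan.} The whole statement is a consequence of the fact that Sub-step~2 is precisely the Helmholtz--Leray orthogonal decomposition of $\utilde$ in ${\bf L}^2(\Omega)$. I would first settle existence and uniqueness: since ${\bf H}$ is a closed subspace of ${\bf L}^2(\Omega)$ with orthogonal complement $\{\nabla q:q\in H^1(\Omega)\}$ (so that ${\bf L}^2(\Omega)={\bf H}\oplus\{\nabla q:q\in H^1(\Omega)\}$), any $\utilde\in{\bf L}^2(\Omega)$ can be written in a unique way as $\utilde=\uu+\nabla\psi$ with $\uu=P_{{\bf H}}\utilde\in{\bf H}$ and $\psi\in H^1(\Omega)$ determined up to an additive constant; setting $k\,(p^{m+1}-p^m)=\psi$ one gets $\frac1k(\uu-\utilde)+\nabla(p^{m+1}-p^m)=0$ together with $\nabla\cdot\uu=0$ and $\uu\cdot{\bf n}|_{\partial\Omega}=0$, so $(\uu,p^{m+1})$ solves $(S_2)^{m+1}$. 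Conversely, the difference of two solutions would be an element of ${\bf H}$ that is also a gradient, hence zero, which gives uniqueness of $\uu$ (and of $\nabla(p^{m+1}-p^m)$, i.e. of $p^{m+1}$ up to a constant).

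For part \textbf{a)}, identity (\ref{continuous-dependence-a}) is just Pythagoras applied to $\utilde=\uu+(\utilde-\uu)$ with $\utilde-\uu=k\nabla(p^{m+1}-p^m)$: the cross term $(\uu,\utilde-\uu)=k\big(\uu,\nabla(p^{m+1}-p^m)\big)$ vanishes by the orthogonality property (\ref{L2-orthog}), whence $|\utilde|^2=|\uu|^2+|k\nabla(p^{m+1}-p^m)|^2$. For (\ref{continuous-dependence-b}) I would use that $\uu=P_{{\bf H}}\utilde$ realizes the ${\bf L}^2$-distance from $\utilde$ to ${\bf H}$, so $|\uu-\utilde|\le|{\bf v}-\utilde|$ for every ${\bf v}\in{\bf H}$; taking ${\bf v}={\bf u}^m$, which belongs to ${\bf H}$ throughout the iteration of Algorithm~\ref{alg-time} (the initial velocity is divergence-free and every subsequent ${\bf u}^{m+1}$ is the projection $P_{{\bf H}}\widetilde{\bf u}^{m+1}$), yields $|\uu-\utilde|\le|{\bf u}^m-\utilde|$.

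For part \textbf{b)}, if $\utilde\in{\bf H}_0^1(\Omega)$ then $\nabla\cdot\utilde\in L^2(\Omega)$ with $|\nabla\cdot\utilde|\le C\,\|\utilde\|$, and the compatibility condition $\int_\Omega\nabla\cdot\utilde=\int_{\partial\Omega}\utilde\cdot{\bf n}=0$ holds, so the potential $\psi=k(p^{m+1}-p^m)$ of the decomposition solves the Neumann problem $(S_2)_a^{m+1}$, i.e. $\Delta\psi=\nabla\cdot\utilde$ in $\Omega$ and $\nabla\psi\cdot{\bf n}=0$ on $\partial\Omega$. Assuming $\Omega$ regular enough for the Neumann--Laplacian to be $H^2$-regular (e.g. of class $C^{1,1}$ or convex polyhedral), one gets $\psi\in H^2(\Omega)$ with $\|\psi\|_{H^2}\le C\,|\nabla\cdot\utilde|\le C\,\|\utilde\|$ (after normalizing $\psi$ to zero mean); then from $(S_2)_b^{m+1}$, $\uu=\utilde-\nabla\psi\in{\bf H}^1(\Omega)$ (and $\uu\in{\bf H}$ by part a)), and $\|\uu\|=|\nabla\utilde-\nabla^2\psi|\le\|\utilde\|+\|\psi\|_{H^2}\le C\,\|\utilde\|$.

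The only ingredient beyond elementary Hilbert-space geometry is the $H^2$-elliptic regularity of the Neumann problem invoked in part \textbf{b)}: this is where the regularity hypotheses on $\Omega$ enter, and the bound $\|\uu\|\le C\,\|\utilde\|$ is \emph{not} obtainable by a simple energy test, since it really controls the $H^2$-regularity of the pressure-increment potential $\psi$. Everything else follows directly from the orthogonal structure of the projection step.
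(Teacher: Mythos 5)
Your proposal is correct and follows essentially the same route as the paper: identity (\ref{continuous-dependence-a}) from the orthogonality of the $L^2$-projection $\uu=P_{\bf H}\utilde$, estimate (\ref{continuous-dependence-b}) from the best-approximation property with ${\bf u}^m\in{\bf H}$ as competitor, and part b) from the $H^2$-regularity of the Poisson--Neumann problem $(S_2)_a^{m+1}$ giving $k\,\|\nabla(p^{m+1}-p^m)\|_{H^1}\le C\,\|\utilde\|$. Your explicit treatment of existence and uniqueness via the Helmholtz--Leray decomposition is merely a spelled-out version of what the paper leaves implicit.
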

\begin{proof}

{\bf a)} 
Since $\uu = P_{{\bf H}} \utilde$, one has (\ref{continuous-dependence-a}).
%
Moreover,  estimate (\ref{continuous-dependence-b}) 
can be obtained 
directly  from the best approximation property of the $L^2$-projection:
$$\dis |\uu -\utilde |= \min_{{\bf u} \in {\bf H}}| {\bf u} -
\utilde | .$$

${\bf b)}$
 By applying the $H^2(\Omega)$-regularity of  problem
  $(S_2)_a^{m+1}$ ,  there exists a unique $p^{m+1}-p^m \in H^2 \cap
L^2_0$ 
satisfying
$$k\, \|\nabla (p^{m+1}-p^m)\|_{H^1} \leq C \|\utilde\|.$$
 Therefore,
  ${\bf u}^{m+1} \in {\bf H}^1 (\Omega)$ and
$$\|{\bf u}^{m+1}\| \leq C \Big \{\|\utilde\|
+k\, \|\nabla (p^{m+1}-p^m) \|_{H^1} \Big\}  \leq C \,\|\utilde\|.$$ 
This estimate can be understood as the $ {\bf
H}^1$-stability of the $ L^2$-projector  onto ${\bf H}$.
\end{proof}

\begin{lemma}[Stability of Algorithm~\ref{alg-time}]\label{vn}
Let ${\bf f}\in  L ^2 (0,T;{\bf H}^{-1}(\Omega))$ (${\bf
H}^{-1}(\Omega)$  being the dual space of  ${\bf
H}_0^{1}(\Omega)$) and ${\bf u} _0 \in {\bf H}$. Assuming the
following constraint on the initial discrete pressure $k\, |\nabla
p^0 | \le C_0 $, then there exists a constant $ C=C\big(C_0, {\bf
  u}_0,  {\bf
f} , \Omega \big)
> 0 $ such that,
 \begin{eqnarray*}
   &&
|\widetilde{\bf u}^{r+1}|^2 + |{\bf u}^{r+1}|^2 + | k\, \nabla
p^{r+1}|^2\leq  C ,\quad \forall \, r=0,...,M-1,
\\
&&  k \sum_{m=0}^{M-1} \left \{  \|\utilde\|^2
+ \|{\bf u}^{m+1}\|^2 \right \}
\leq C
.
\end{eqnarray*}
\end{lemma}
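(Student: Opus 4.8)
The plan is to run the classical discrete energy argument for incremental-pressure projection schemes: I would test the convection--diffusion sub-step $(S_1)^{m+1}$ with the intermediate velocity $\utilde$, and then use the Hilbert-space geometry of the projection sub-step $(S_2)^{m+1}$ (the identities of Lemma~\ref{regularidad} together with the orthogonality (\ref{L2-orthog})) to absorb the explicit pressure term. A convenient point is that, because the convective term is linearized in the skew-symmetric form $C(\widetilde{\bf u}^m,\cdot)$, it disappears on testing and no Gronwall inequality (Lemma~\ref{GronwallD}) is needed: the bound will follow from a plain telescoping summation.

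First I would multiply $(S_1)^{m+1}$ by $2k\,\utilde$ and integrate over $\Omega$. The convective contribution $2k\,c(\widetilde{\bf u}^m,\utilde,\utilde)$ vanishes by (\ref{antisim}); the discrete time-derivative term is rewritten via $2(a-b,a)=|a|^2-|b|^2+|a-b|^2$, producing $|\utilde|^2-|{\bf u}^m|^2+|\utilde-{\bf u}^m|^2$; the diffusion term yields $2k\,\|\utilde\|^2$; and the forcing is controlled by $2k\,({\bf f}^{m+1},\utilde)\le k\,\|{\bf f}^{m+1}\|_{{\bf H}^{-1}}^2+k\,\|\utilde\|^2$, which absorbs half of the dissipation. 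The only remaining term is the explicit pressure term $2k\,(\nabla p^m,\utilde)$.

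That pressure term is the one point requiring care. Using $(S_2)^{m+1}$ in the form $\utilde={\bf u}^{m+1}+k\,\nabla(p^{m+1}-p^m)$ and applying (\ref{L2-orthog}) with $q=p^m$ to annihilate $({\bf u}^{m+1},\nabla p^m)$, one gets $2k\,(\nabla p^m,\utilde)=2k^2\,(\nabla p^m,\nabla(p^{m+1}-p^m))$; writing $\psi^m:=k\,\nabla p^m$ and expanding by $2(a,b-a)=|b|^2-|a|^2-|b-a|^2$, this equals $|\psi^{m+1}|^2-|\psi^m|^2-|\psi^{m+1}-\psi^m|^2$. On the other hand, the Pythagorean identity (\ref{continuous-dependence-a}) reads $|\utilde|^2=|{\bf u}^{m+1}|^2+|\psi^{m+1}-\psi^m|^2$, so the two copies of $|\psi^{m+1}-\psi^m|^2$ cancel exactly, and the per-step inequality collapses to
$$
|{\bf u}^{m+1}|^2-|{\bf u}^m|^2+|\psi^{m+1}|^2-|\psi^m|^2+|\utilde-{\bf u}^m|^2+k\,\|\utilde\|^2\;\le\;k\,\|{\bf f}^{m+1}\|_{{\bf H}^{-1}}^2 .
$$

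Finally, summing this telescoping inequality over $m=0,\dots,r$ and discarding the nonnegative terms $|\utilde-{\bf u}^m|^2$ gives $|{\bf u}^{r+1}|^2+|k\,\nabla p^{r+1}|^2+k\sum_{m=0}^r\|\utilde\|^2\le |{\bf u}_0|^2+|k\,\nabla p^0|^2+k\sum_{m=0}^r\|{\bf f}^{m+1}\|_{{\bf H}^{-1}}^2$, which is $\le C(C_0,{\bf u}_0,{\bf f},\Omega)$ by the hypotheses ${\bf u}_0\in{\bf H}$, $k\,|\nabla p^0|\le C_0$ and ${\bf f}\in L^2(0,T;{\bf H}^{-1})$. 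This already delivers the $l^\infty({\bf L}^2)$ bounds for ${\bf u}^{m+1}$ and for $k\,\nabla p^{m+1}$, and the estimate $k\sum\|\utilde\|^2\le C$. The bound on $|\widetilde{\bf u}^{r+1}|$ then follows from (\ref{continuous-dependence-a}) once $|{\bf u}^{m+1}|$ and the $|\psi^m|$'s are known to be bounded, and $k\sum\|{\bf u}^{m+1}\|^2\le C$ follows by applying the ${\bf H}^1$-stability of the projector, $\|{\bf u}^{m+1}\|\le C\,\|\utilde\|$ (Lemma~\ref{regularidad}\,b)), to the already-controlled quantity $k\sum\|\utilde\|^2$. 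The main obstacle is thus the bookkeeping in the pressure step that produces the exact cancellation; everything else is routine.
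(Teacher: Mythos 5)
Your proof is correct and is essentially the paper's own argument: you arrive at exactly the same per-step identity $|{\bf u}^{m+1}|^2+|k\nabla p^{m+1}|^2-|{\bf u}^m|^2-|k\nabla p^m|^2+|\utilde-{\bf u}^m|^2+2k\|\utilde\|^2=2k({\bf f}^{m+1},\utilde)$ (the paper gets it by testing $(S_2)^{m+1}$ with a suitable combination and adding, you by substituting $(S_2)^{m+1}$ into the pressure term --- algebraically the same cancellation), and you then invoke Lemma~\ref{regularidad} for the bounds on $\widetilde{\bf u}^{m+1}$ and $\|{\bf u}^{m+1}\|$ exactly as the paper does. Your observation that plain telescoping suffices is also consistent with the paper, since its appeal to Gronwall is with $\gamma_m=0$ and so reduces to summation.
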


\begin{proof} We only give here an outline of the proof, which follows the same lines given in  the proof of Theorem \ref{v1} below.  
By making 
$$2 k\, \Big ( (S_1)^{m+1} , \umedio \Big ) +k\,  \Big (
(S_2)^{m+1} ,\umedio + \uu + k\, ( \nabla p^{m+1} +
 \nabla p^{m})  \Big ),$$
and using orthogonality property (\ref{L2-orthog}):
$$| \uu|^2+|k\, \nabla p^{m+1} |^2 -| {\bf u}^m |^2-|k\, \nabla p^m|
  ^2+ |\utilde-{\bf u}^m|^2+2\, k\, \|\utilde\| ^2=2\, k\, ({\bf
    f}^{m+1}, \utilde ),
$$
hence, by using  the discrete Gronwall's Lemma (Lemma~\ref{GronwallD}):
$$
 \| {\bf u}^{m+1} \| _{l^{\infty}({\bf L}^2)} + \| k\, \nabla p^{m+1}
 \|_{l^{\infty}( L^2)} + \| \umedio \| _{ l^2 ({\bf H}^1)}  \le C \quad
 \mbox{and} \quad \sum_{m \ge 0} |\utilde-{\bf u}^m|^2\le C .
$$
Now, accounting  Lemma \ref{regularidad}, the following
supplementary stability  estimates hold:
 $$\| \umedio  \| _{l^{\infty}({\bf L}^2)} \le C
 \q\hbox{and}\q \|  {\bf u}^{m+1} \|_{l^2 ({\bf H}^1)} \le C. $$
\end{proof}

Starting from the previous stability estimates and taking limits as $k\downarrow
0$ in $(S_3)^{m+1}$, the convergence of
the velocity approximations have already been established  (for
instance, see \cite{teman3}). Concretely,  defining ${\bf u}_{k}:(0,T]\to {\bf H}\cap {\bf H}^1(\Omega)$
as the piecewise constant functions taking the value ${\bf u}^{m+1}$
in $(t_m,t_{m+1}]$, the following result holds:
\begin{proposition}[Convergence of Algorithm~\ref{alg-time}] Under conditions of Lemma~\ref{vn}, there
exists  a subsequence $(k')$ of $(k)$, 
and a weak solution ${\bf u}\in L^{\infty} (0,T;{\bf H})\cap L^{2} (0,T;{\bf V})$  of $(P)$ in $(0,T)$, such that:
 ${\bf u}_{k'}\to {\bf u}$
  weakly-$*$ in
$L^{\infty} (0,T;{\bf H})$, weakly in $L^2 (0,T;{\bf
H}^1(\Omega)\cap {\bf H})$ and strongly in $L^2 (0,T;{\bf H})$, as $ k' \downarrow 0 $.
\end{proposition}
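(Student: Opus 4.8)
The plan is to argue by compactness, following the classical convergence theory for fractional-step projection schemes; the only ingredients needed are the a priori bounds of Lemma~\ref{vn}. First I would introduce, besides ${\bf u}_k$, the piecewise constant interpolants $\widetilde{\bf u}_k$, ${\bf u}_k^-$ (the one-step shift, value ${\bf u}^m$ on $(t_m,t_{m+1}]$) and $P_k$ (value $k\,\nabla p^{m+1}$ on $(t_m,t_{m+1}]$). By Lemma~\ref{vn}, ${\bf u}_k$ and $\widetilde{\bf u}_k$ are bounded in $L^\infty(0,T;{\bf L}^2)\cap L^2(0,T;{\bf H}^1)$ and $P_k$ is bounded in $L^\infty(0,T;{\bf L}^2)$, so along a subsequence $(k')$ of $(k)$ one has ${\bf u}_{k'}\rightharpoonup{\bf u}$ and $\widetilde{\bf u}_{k'}\rightharpoonup\widetilde{\bf u}$ weakly-$*$ in $L^\infty(0,T;{\bf L}^2)$ and weakly in $L^2(0,T;{\bf H}^1)$. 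Since each ${\bf u}^{m+1}$ is divergence-free with ${\bf u}^{m+1}\cdot{\bf n}|_{\partial\Omega}=0$, the weak limit satisfies ${\bf u}\in L^2(0,T;{\bf V})$.

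Next I would check that $\widetilde{\bf u}={\bf u}$ and that ${\bf u}_k^-$ has the same limit: from (\ref{continuous-dependence-b}) and Lemma~\ref{vn},
$$
\|\widetilde{\bf u}_k-{\bf u}_k\|_{L^2(0,T;{\bf L}^2)}^2 = k\sum_{m=0}^{M-1}|\widetilde{\bf u}^{m+1}-{\bf u}^{m+1}|^2 \le k\sum_{m=0}^{M-1}|\widetilde{\bf u}^{m+1}-{\bf u}^{m}|^2 \le C\,k,
$$
which tends to $0$ as $k\to 0$; hence $\widetilde{\bf u}={\bf u}$, and using $|{\bf u}^{m+1}-{\bf u}^m|\le 2\,|\widetilde{\bf u}^{m+1}-{\bf u}^m|$ together with $\sum_{m}|\widetilde{\bf u}^{m+1}-{\bf u}^m|^2\le C$ one also gets $\|{\bf u}_k-{\bf u}_k^-\|_{L^2(0,T;{\bf L}^2)}\to 0$ and the equicontinuity bound $\sum_{m}|{\bf u}^{m+1}-{\bf u}^m|^2\le C$. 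For the strong $L^2(0,T;{\bf H})$ convergence I would test the consistency relation $(S_3)^{m+1}$ against ${\bf v}\in{\bf V}$ (which eliminates the pressure thanks to (\ref{L2-orthog})) to get a uniform bound for the discrete time-derivative of ${\bf u}_k$ in a space of negative order, say $L^{4/3}(0,T;{\bf V}')$; combined with the $L^2({\bf H}^1)$ bound and the equicontinuity estimate just proved, an Aubin–Lions–Simon argument applied to a suitable (piecewise linear) interpolant of $({\bf u}^m)$ yields, along a further subsequence, ${\bf u}_{k'}\to{\bf u}$ strongly in $L^2(0,T;{\bf H})$, and hence also $\widetilde{\bf u}_{k'},{\bf u}_{k'}^-\to{\bf u}$ strongly there.

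Finally I would pass to the limit in the scheme: fixing ${\bf v}\in{\bf V}$ and $\varphi\in C^\infty_c([0,T))$, I multiply $(S_3)^{m+1}$ by $k\,\varphi(t_m)$ and sum over $m$; a discrete summation by parts transfers the increment onto $\varphi$, the diffusion term passes to the limit by the weak $L^2({\bf H}^1)$ convergence, the pressure term vanishes identically on ${\bf V}$, and the trilinear term, written as $c({\bf u}_k^-,\widetilde{\bf u}_k,{\bf v})$, passes to the limit by combining the strong $L^2({\bf H})$ convergence of ${\bf u}_k^-$ with the weak $L^2({\bf H}^1)$ convergence of $\widetilde{\bf u}_k$; the boundary term carrying $\varphi(0)$ recovers ${\bf u}(0)={\bf u}_0$. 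This identifies ${\bf u}$ as a weak solution of $(P)$ on $(0,T)$. I expect the main obstacle to be the compactness step: the relation $(S_3)^{m+1}$ only controls a single time increment rather than a genuine time derivative and $\widetilde{\bf u}_k$ is not divergence-free, so some care is needed to set up Aubin–Lions correctly (working with the shifted interpolant and with test functions restricted to ${\bf V}$); this is exactly the point carried out in detail in the references quoted above, e.g.\ \cite{teman3}, which is why here only an outline is warranted.
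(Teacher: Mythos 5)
Your outline is correct and is essentially the argument the paper has in mind: the paper gives no written proof of this proposition, merely remarking that the convergence ``has already been established'' and citing \cite{teman3}, and your sketch reproduces exactly that classical compactness route (uniform bounds from Lemma~\ref{vn}, the $O(\sqrt{k})$ estimate identifying the limits of ${\bf u}_k$, $\widetilde{\bf u}_k$ and ${\bf u}_k^-$, a negative-norm bound on the discrete time derivative feeding an Aubin--Lions--Simon argument, and passage to the limit in $(S_3)^{m+1}$ against ${\bf V}$-valued test functions). One small misattribution: the zero Dirichlet trace of the limit ${\bf u}$ (hence ${\bf u}\in L^{2}(0,T;{\bf V})$) does not follow from the properties of ${\bf u}^{m+1}$, which has only vanishing normal trace, but from $\widetilde{\bf u}^{m+1}\in{\bf H}_0^1(\Omega)$ together with your subsequent identification $\widetilde{\bf u}={\bf u}$, so that justification should be placed after the estimate $\|\widetilde{\bf u}_k-{\bf u}_k\|_{L^2({\bf L}^2)}\le C\sqrt{k}$; you are likewise right that the genuinely delicate point is the compactness step, for precisely the reason you give (the end-of-step velocity is not in ${\bf H}_0^1$ and the single-increment bound alone does not yield uniform time-equicontinuity).
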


\subsection{Differential problems satisfied by the errors}
We will obtain  error estimates (for  velocity and pressure) with
respect to  a sufficiently regular (and unique) solution
$({\bf u}, p )$ of $(P)$. For this, we introduce the following
notations for the errors in $t=t_{m+1}$:
\begin{eqnarray*}
&& \widetilde{\bf e}^{m+1} :={\bf u}(t_{m+1})-\widetilde{\bf u}^{m+1}, \quad{\bf
e}^{m+1} :={\bf u}(t_{m+1})-{\bf u}^{m+1}, \quad e_p^{m+1} := p
(t_{m+1})- p^{m+1},
\end{eqnarray*}
and  for the discrete in time derivative of errors
$$\delta  _t  {\bf e}^{m+1}:=\displaystyle \frac{{\bf e}^{m+1}-{\bf
    e}^{m}}{k}, \quad
 \delta  _t  \widetilde{\bf e}^{m+1}:=\displaystyle \frac{\widetilde{\bf e}^{m+1}-\widetilde{\bf
     e}^{m}}{k}.$$
Subtracting    $(S_1)^{m+1}$  with the momemtum system of $(P)$ at $t=t_{m+1}$, using the integral rest and manipulating the convective terms, one has:
$$
\left \{ \begin{array}{l} \displaystyle\frac{1}{k} (\emedio -{\bf
e}^m) - \Delta \widetilde{\bf e}^{m+1}+ \nabla\, (e_{p} ^m + k\, \delta_t  p(t_{m+1}) )
=\mathcal{E}^{m+1} +{\bf NL}^{m+1} \q \mbox{in  } \Omega ,\\
\noalign{\smallskip} \widetilde{\bf e}^{m+1}|_{\partial
\Omega}={\bf 0},
\end{array}\right.
\leqno{(E_1)^ {m+1}}
$$
where
$$
\mathcal{E}^{m+1}=-\displaystyle\frac{1}{k}
\displaystyle\int_{t_m}^{t_{m+1}} (t-t_m) \, {\bf u}_{tt}  (t)\,
dt -\dis\left( \int_{t_m}^{t_{m+1}} {\bf u}_t\cdot \gradV \right)
{\bf u} (t_{m+1}) := \mathcal{E}_1^{m+1} + \mathcal{E}_2^{m+1}
$$
is the consistency error, and
$$
{\bf NL}^{m+1} = -C \Big(\widetilde{\bf e}^{m} ,  {\bf u}(t_{m+1})
\Big )-C \Big(\widetilde{\bf u}^{m}, \etilde \Big ) 
$$
are  
 terms depending  of the
convective terms. 

On the other hand,  adding and subtracting the term ${\bf
u}(t_{m+1})$ in  $(S_2)^{m+1}$,
$$
\left \{ \begin{array}{l}
\displaystyle\frac{1}{k}({\bf e}^{m+1}-\widetilde{\bf e}^{m+1})
  +  \nabla \Big (
 e_{p} ^{m+1} - e_p^m -k\,\delta_t p(t_{m+1})  \Big ) ={\bf 0} \q \mbox{in
} \Omega,
\\
 \noalign{\smallskip}
 \nabla \cdot {\bf e} ^ {m+1}  = 0 \q \mbox{in } \Omega ,
\qquad {\bf e}^{m+1} \cdot {\bf n}|_{\partial \Omega}= {\bf 0}.
\end{array}\right.
\leqno{(E_2)^{m+1}}
$$

%
%
%
%
%
Finally, adding  $(E_1)^{m+1}$ and  $(E_2)^{m+1}$, we arrive at:
$$
\left \{ \begin{array}{l}
\displaystyle\frac{1}{k}({\bf e}^{m+1}-{\bf e}^m ) -
\laplac \widetilde{\bf e}^{m+1} +\nabla e_p^{m+1}= \mathcal{E}^{m+1}
+{\bf NL}^{m+1}
  \q \mbox{in } \Omega,
\\
 \noalign{\smallskip}
\nabla \cdot {\bf e}^{m+1} = 0 \q \mbox{in } \Omega ,
\qquad {\bf e}^{m+1} \cdot {\bf n}|_{\partial \Omega}= {\bf 0}.
\end{array}\right.
\leqno{(E_3)^{m+1}}
$$
\begin{lemma}[Continuous dependence of
  the projection  errors] \label{regularidad-errores} The following inequalities hold
\begin{equation}\label{continuous-dependence-errorsa}
|\etilde|^ 2= | \ee|^2+ | k\, \nabla \, (e_p^{m+1}-e_p^m -k\, \delta_t p(t_{m+1}))|^2,
\end{equation}
$$
 | \ee -\etilde | \le | \etilde - {\bf e}^m | 
$$
$$\| {\bf e}^{m+1} \| \leq C\,  \| \etilde\| .
$$
\end{lemma}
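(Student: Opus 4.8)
The key observation is that the error system $(E_2)^{m+1}$ is structurally identical to the projection sub-step $(S_2)^{m+1}$, so the three conclusions will be obtained by transcribing the proof of Lemma~\ref{regularidad}. Indeed, abbreviating $g := e_p^{m+1}-e_p^m-k\,\delta_t p(t_{m+1})$, the first equation of $(E_2)^{m+1}$ says $\etilde = \ee + k\,\nabla g$, while the remaining relations give $\nabla\cdot\ee=0$ in $\Omega$ and $\ee\cdot{\bf n}|_{\partial\Omega}={\bf 0}$, that is, $\ee\in{\bf H}$. Since $k\,\nabla g$ is the gradient of an $H^1(\Omega)$ function it is ${\bf L}^2(\Omega)$-orthogonal to ${\bf H}$ (this is (\ref{L2-orthog}) with $\ee$ in the role of ${\bf u}^{m+1}$), so $\etilde = \ee + k\,\nabla g$ is precisely the Helmholtz decomposition of $\etilde$; hence $\ee = P_{{\bf H}}\,\etilde$, exactly as in Lemma~\ref{regularidad}.

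Granting this, the first identity (\ref{continuous-dependence-errorsa}) is just the Pythagorean identity applied to the orthogonal sum $\etilde=\ee+k\,\nabla g$. For the second inequality I would use the best-approximation characterization $|\ee-\etilde| = \min_{{\bf v}\in{\bf H}}|{\bf v}-\etilde|$, noting that ${\bf e}^m\in{\bf H}$ as well: indeed ${\bf u}(t_m)\in{\bf V}$ and ${\bf u}^m$ satisfies $\nabla\cdot{\bf u}^m=0$, ${\bf u}^m\cdot{\bf n}|_{\partial\Omega}=0$ by Sub-step~2. Taking ${\bf v}={\bf e}^m$ in the minimum yields $|\ee-\etilde|\le|{\bf e}^m-\etilde|$.

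For the ${\bf H}^1$ bound I would argue exactly as in part b) of Lemma~\ref{regularidad}. Applying the divergence operator to the first equation of $(E_2)^{m+1}$ and using $\nabla\cdot\ee=0$, the function $g$ solves the Poisson--Neumann problem $k\,\Delta g=\nabla\cdot\etilde$ in $\Omega$ with $k\,\nabla g\cdot{\bf n}|_{\partial\Omega}=0$; here the homogeneous Neumann condition comes from $\etilde|_{\partial\Omega}={\bf 0}$ (from $(E_1)^{m+1}$) together with $\ee\cdot{\bf n}|_{\partial\Omega}={\bf 0}$. By $H^2(\Omega)$-regularity of this problem, $k\,\|\nabla g\|_{H^1}\le C\,|\nabla\cdot\etilde|\le C\,\|\etilde\|$, and therefore $\|\ee\|\le C\{\|\etilde\|+k\,\|\nabla g\|_{H^1}\}\le C\,\|\etilde\|$, which is the ${\bf H}^1$-stability of $P_{{\bf H}}$.

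There is no genuine difficulty here: the whole content is that $(E_2)^{m+1}$ inherits the projection structure of $(S_2)^{m+1}$ because it is obtained by subtracting a relation satisfied by the (divergence-free, zero-trace) exact solution, so the three assertions reduce verbatim to Lemma~\ref{regularidad}. The only two points deserving an explicit word are the membership ${\bf e}^m\in{\bf H}$, needed for the best-approximation step, and the zero normal trace of $\etilde$, needed to obtain a homogeneous Neumann condition for $g$; both are immediate from $(E_1)^{m+1}$ and Sub-step~2 of Algorithm~\ref{alg-time}.
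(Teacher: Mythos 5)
Your proposal is correct and follows exactly the route the paper takes: its entire proof is the one-line remark that the argument of Lemma~\ref{regularidad} applies verbatim once one observes $\ee = P_{{\bf H}}\,\etilde$, which is precisely the observation you establish and then exploit. Your added details (the membership ${\bf e}^m\in{\bf H}$ for the best-approximation step and the zero trace of $\etilde$ giving the homogeneous Neumann condition for the Poisson problem) are accurate fillings-in of what the paper leaves implicit.
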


\begin{proof} The proof is similar to  Lemma \ref{regularidad}, by
 using that $\ee= P_{{\bf H}} \etilde$.
\end{proof}

\subsection{Regularity hypotheses.}
We will assume the following regularity hypothesis
on $\Omega$:
\medskip

\noindent $ {\bf (H0)}  \quad \Omega\subset \R^3$ such that Poisson problems in $\Omega$  have $ {\bf H}^2(\Omega)$-regularity.
\medskip

 In order to obtain the
different error estimates, the following regularity hypotheses for
the (unique)  solution
 $({\bf u},p)$ of $(P)$ will be appearing:
\medskip

\noindent $ {\bf (H1)} \quad \quad {\bf u} \in L^{\infty} ({\bf H}^2\cap {\bf V}),
\quad p_t \in  L^2 (H^1), \quad {\bf u}_t \in L^{2}({\bf L}^2) ,
 \quad   {\bf u}_{tt} \in L^{2} ({\bf H}^{-1})
$
\medskip

\noindent $ {\bf (H2)} \quad \quad
p_{tt} \in  L^2 ( H^1), \q
 {\bf u}_{t} \in L^{\infty} ({\bf L}^{3})\cap L^2({\bf H}^1)  ,\q {\bf u}_{tt} \in
L^{2} ({\bf L}^{2}),   \quad {\bf u}_{ttt} \in L^{2} ({\bf
H}^{-1}) $
\medskip

\noindent $ {\bf (H3)} \quad \quad  {\bf u}_{tt} \in
L^{\infty} ({\bf H}^{-1})$
\begin{remark} 
Unfortunately,  to obtain  hypotheses ${\bf (H1)}$-${\bf (H3)}$ is
necessary to assume that ${\bf u}_t (0) \in {\bf H}^1$, which
implies a non-local compatibility condition for the data ${\bf
u}_0$ and ${\bf f}(0)$. In particular, it is proved in \cite{hr} that
${\bf (H1)}$-${\bf (H3)}$ is satisfied (al least locally in time), if there exists  $p_0
\in H^1(\Omega)$ (the initial pressure)  solution
 of the following overdetermined Neumann problem
$$
\left\{
\begin{array}{l}
\Delta  p_0 = \nabla \cdot \Big ( {\bf f}(0) -( {\bf u}_0 \cdot
\nabla) {\bf u}_0 \Big ) \quad \mbox {in } \Omega, \\
\noalign{\smallskip} \nabla  p_0|_{\partial \Omega} = \Big ( \,
\Delta  {\bf u}_0 + {\bf f}(0) -({\bf u}_0 \cdot \nabla ) {\bf
u}_0 \Big ) |_{\partial \Omega}.
\end{array}
\right.
$$
 which in practice is hard to fulfill (see \cite{hr}).
 
In \cite{prohl},  error estimates for the (non-incremental) Chorin-Temam projection scheme are deduced without requiring this non-local compatibility condition,
arriving at the optimal  order $O(k)$  in $l^{\infty} ({\bf
L}^2)$ for the velocity and  in $l^{\infty}(H^{-1})$ for the
pressure, where a weight at the initial time steps must be
included to deduce the optimal order for the pressure (only possible in a negative norm).

Nevertheless, for the incremental scheme Algorithm~\ref{alg-time} it is not clear how to avoid this compatibility on the data  using adequate weights at the initial time steps. 
\end{remark}
\subsection{$O(k)$-error estimates for both velocities}

\begin{theorem} \label{v1}
 Under conditions of Lemma~\ref{vn},
  {\bf (H1)} and the bound for the initial error pressure  $ |\nabla e_{p}^0 | \leq C$,
 the following error estimates hold:
\begin{equation}\label{est-errores-velocidad}
\| \etilde  \| _{ l ^{\infty}( {\bf L}^2) \cap
l ^{2}( {\bf H}^1)}+ \| {\bf e}^{m+1}  \| _{ l ^{\infty}( {\bf
L}^2) \cap l ^{2}( {\bf H}^1)}  \leq C \, k,  \quad  \| e_{p}^{m+1}
\|_{l^{\infty}(H^1)} \le C ,
\end{equation}
\begin{equation}\label{est-diferencia-errores}
 \| \etilde -{\bf e}^m \| _{l^2 ( {\bf L}^2)}+\| \ee-\etilde \| _{l^2 ( {\bf L}^2)}
 \leq C\, k^{3/2}.
\end{equation}
\end{theorem}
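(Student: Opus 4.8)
The plan is to obtain the estimates in Theorem~\ref{v1} by a standard energy argument on the error equations, testing $(E_1)^{m+1}$ and $(E_2)^{m+1}$ against well-chosen multipliers and summing over $m$, so that the telescoping and skew-symmetry structures combine into a discrete Gronwall inequality. Concretely, I would mimic the combination used in the outline of Lemma~\ref{vn}: form
$$
2k\,\bigl((E_1)^{m+1},\emedio\bigr)+k\,\bigl((E_2)^{m+1},\,\emedio+\ee+k(\nabla e_p^{m+1}+\nabla e_p^m-2k\,\delta_t p(t_{m+1}))\bigr),
$$
using the orthogonality property \eqref{L2-orthog} (valid for $\ee$ since $\nabla\cdot\ee=0$, $\ee\cdot{\bf n}=0$) to kill the pressure/velocity cross terms, and using \eqref{continuous-dependence-errorsa} to get the telescoping identity for $|\ee|^2+|k\nabla(e_p^{m+1}-e_p^m-k\delta_tp)|^2$. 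The diffusion term gives $2k\|\etilde\|^2$, and the term $|\etilde-{\bf e}^m|^2$ appears with a good sign; this last one will be what eventually produces \eqref{est-diferencia-errores} once the main estimate is in hand (together with the projection inequality $|\ee-\etilde|\le|\etilde-{\bf e}^m|$ from Lemma~\ref{regularidad-errores}).

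The right-hand side splits into the consistency contribution $2k(\mathcal{E}^{m+1},\emedio)$ and the nonlinear contribution $2k({\bf NL}^{m+1},\emedio)$. For the consistency part, $\mathcal{E}_1^{m+1}$ is handled by $|\mathcal{E}_1^{m+1}|_{{\bf H}^{-1}}\le Ck^{1/2}(\int_{t_m}^{t_{m+1}}\|{\bf u}_{tt}\|_{{\bf H}^{-1}}^2)^{1/2}$ so that $k\sum_m|\mathcal{E}_1^{m+1}|_{{\bf H}^{-1}}^2\le Ck^2$ by {\bf (H1)}; $\mathcal{E}_2^{m+1}$ is bounded using ${\bf u}_t\in L^2({\bf L}^2)$ and ${\bf u}\in L^\infty({\bf H}^2)$ in the same way. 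Each is paired with $\|\emedio\|$ via Young's inequality, absorbing $\varepsilon k\|\etilde\|^2$ into the diffusion term. The nonlinear term is the delicate point: write ${\bf NL}^{m+1}=-C(\etilde[m],{\bf u}(t_{m+1}))-C(\widetilde{\bf u}^m,\etilde)$ (with $\etilde[m]=\widetilde{\bf e}^m$). The second piece, tested against $\emedio$, is not simply zero because the test function is $\emedio$, not $\etilde$; so I would split $\emedio=\etilde-(\etilde-\emedio)$, use antisymmetry $c(\widetilde{\bf u}^m,\etilde,\etilde)=0$ on the first summand, and control the remainder $c(\widetilde{\bf u}^m,\etilde,\etilde-\emedio)$ by $\|\widetilde{\bf u}^m\|_{L^3}\|\etilde\|\,|\ee-\etilde|/\dots$ — here I would rather use the error equation $(E_2)^{m+1}$ to replace $\etilde-\ee$, but cleanly it is easier to test $(E_1)^{m+1}$ directly against $\etilde$ and carry the projection identity separately. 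The first piece, $-c(\widetilde{\bf e}^m,{\bf u}(t_{m+1}),\emedio)$, is estimated by $\|\widetilde{\bf e}^m\|_{L^3}\|{\bf u}(t_{m+1})\|\,\|\emedio\|$ or, better, by the bound $\|\widetilde{\bf e}^m\|\,\|{\bf u}(t_{m+1})\|_{W^{1,3}\cap L^\infty}\,|\emedio|$, so it enters as $\gamma_m|\emedio|^2$-type terms plus an absorbable $\varepsilon k\|\etilde\|^2$, feeding the Gronwall coefficient. The boundedness of $\|\widetilde{\bf u}^m\|_{L^3}$ comes from Lemma~\ref{vn}, and $\|{\bf u}(t_{m+1})\|_{{\bf H}^2}$ is uniformly bounded by {\bf (H1)}.

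After applying Lemma~\ref{GronwallD}, one obtains $\|\etilde\|_{l^\infty({\bf L}^2)}+|k\nabla(e_p^{m+1}-e_p^m-k\delta_tp)|_{l^\infty(L^2)}+\|\etilde\|_{l^2({\bf H}^1)}\le Ck$ and $\sum_m|\etilde-{\bf e}^m|^2\le Ck^2$; the starting constant $B$ involves $|\nabla e_p^0|^2$ which is $O(1)$ by hypothesis, times $k^2$, so it is $O(k^2)$. Then $\|\ee\|_{l^\infty({\bf L}^2)}\le\|\etilde\|_{l^\infty({\bf L}^2)}\le Ck$ from \eqref{continuous-dependence-errorsa}, and $\|\ee\|_{l^2({\bf H}^1)}\le C\|\etilde\|_{l^2({\bf H}^1)}\le Ck$ from the ${\bf H}^1$-stability of the projector in Lemma~\ref{regularidad-errores}. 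For the pressure estimate $\|e_p^{m+1}\|_{l^\infty(H^1)}\le C$: since $k\nabla\delta_tp(t_{m+1})=\nabla(p(t_{m+1})-p(t_m))$ has $l^2(L^2)$-norm $O(k)$ under {\bf (H1)} ($p_t\in L^2(H^1)$), the controlled quantity $k\nabla(e_p^{m+1}-e_p^m)$ is $O(k)$ in $l^\infty(L^2)$, hence $k(e_p^{m+1}-e_p^m)$ is $O(k)$ in $l^\infty(H^1)$ modulo constants, i.e.\ $e_p^{m+1}-e_p^m$ is $O(1)$; combined with the given bound $|\nabla e_p^0|\le C$ and a discrete summation this gives $|\nabla e_p^{m+1}|\le C$ — but for this summation to stay bounded one needs the per-step difference to be $O(k)$, not $O(1)$, so I would instead extract $\|k\nabla(e_p^{m+1}-e_p^m)\|_{l^2(L^2)}\le Ck^{3/2}$ from the energy sum directly, giving $\|\nabla(e_p^{m+1}-e_p^m)\|_{l^2(L^2)}\le Ck^{1/2}$ and then $\|e_p^{m+1}\|_{l^\infty(H^1)}\le|\nabla e_p^0|+\sum_m\|\nabla(e_p^{m+1}-e_p^m)\|\le C+M^{1/2}\cdot Ck^{1/2}\le C$. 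Finally, \eqref{est-diferencia-errores} follows: $\|\etilde-{\bf e}^m\|_{l^2({\bf L}^2)}=(k\sum|\etilde-{\bf e}^m|^2)^{1/2}\le(k\cdot Ck^2)^{1/2}=Ck^{3/2}$, and $\|\ee-\etilde\|_{l^2({\bf L}^2)}\le\|\etilde-{\bf e}^m\|_{l^2({\bf L}^2)}\le Ck^{3/2}$ by the projection inequality in Lemma~\ref{regularidad-errores}. The main obstacle is the bookkeeping of the convective terms with the projected test function $\emedio$ instead of $\etilde$, and making sure the mismatch $\etilde-\emedio=\etilde-\ee$ is absorbed using $(E_2)^{m+1}$ rather than crudely bounded; getting the exact powers of $k$ right in that step is where the argument is most easily botched.
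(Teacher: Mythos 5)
Your overall strategy --- testing $(E_1)^{m+1}$ against $2k\,\etilde$, testing $(E_2)^{m+1}$ against $k(\etilde+\ee)+k^2(\nabla e_p^{m+1}+\nabla e_p^m)$ (your extra $-2k^3\nabla\delta_t p(t_{m+1})$ in the multiplier is harmless but unnecessary), cancelling the cross term $2k(\etilde,\nabla e_p^m)$, using the orthogonality \eqref{L2-orthog} and the projection identities of Lemma~\ref{regularidad-errores}, bounding $\mathcal{E}_1^{m+1},\mathcal{E}_2^{m+1}$ under {\bf (H1)}, exploiting the antisymmetry $c(\widetilde{\bf u}^m,\etilde,\etilde)=0$ and closing with the discrete Gronwall lemma --- is exactly the paper's proof, and your derivations of \eqref{est-diferencia-errores} and of the velocity bounds in \eqref{est-errores-velocidad} are sound. (Incidentally, in this paper's notation $\emedio$ and $\etilde$ denote the same object $\widetilde{\bf e}^{m+1}$, so your worry about the test function being ``$\emedio$, not $\etilde$'' is vacuous.) One bookkeeping caution on the convective term: you should place the $L^2$-norm on the level-$m$ factor, i.e.\ $c(\widetilde{\bf e}^m,{\bf u}(t_{m+1}),\etilde)\le C\,|\widetilde{\bf e}^m|\,\|{\bf u}(t_{m+1})\|_{W^{1,3}\cap L^\infty}\,\|\etilde\|$, and then convert $|\widetilde{\bf e}^m|^2$ into $|{\bf e}^m|^2$ plus $|k\nabla e_p^m|^2+|k\nabla e_p^{m-1}|^2+\dots$ via \eqref{continuous-dependence-errorsa}; your variant, which puts the $L^2$-norm on the level-$(m+1)$ factor, produces a Gronwall coefficient at the top index that the lemma as stated does not accommodate without a prior absorption step.

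The genuine gap is your derivation of $\|e_p^{m+1}\|_{l^\infty(H^1)}\le C$. First, the claimed bound $\|k\nabla(e_p^{m+1}-e_p^m)\|_{l^2(L^2)}\le C k^{3/2}$ is not available from the energy sum: by \eqref{continuous-dependence-errorsa} each increment satisfies $|k\nabla(e_p^{m+1}-e_p^m-k\delta_t p(t_{m+1}))|^2=|\etilde|^2-|\ee|^2\le Ck^2$, and summing $M=T/k$ such (non-telescoping) terms only gives $\sum_m|k\nabla(\cdots)|^2\le Ck$, i.e.\ an $l^2(L^2)$ norm of order $k$, not $k^{3/2}$. Second, even granting $Ck^{3/2}$, your final Cauchy--Schwarz is off by $k^{1/2}$: from $\|\nabla(e_p^{m+1}-e_p^m)\|_{l^2(L^2)}\le Ck^{1/2}$ one gets $\sum_m|\nabla(e_p^{m+1}-e_p^m)|^2\le C$, hence $\sum_m|\nabla(e_p^{m+1}-e_p^m)|\le M^{1/2}C^{1/2}=Ck^{-1/2}$, which blows up; the factor $M^{1/2}\,Ck^{1/2}$ you wrote would require $\sum_m|\nabla(\cdots)|^2\le Ck$. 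The correct (and simpler) route, which your own choice of multiplier already sets up, is that the combined identity contains the telescoping difference $|k\nabla e_p^{m+1}|^2-|k\nabla e_p^m|^2$ of the \emph{full} pressure error gradient, not merely of its increments; the discrete Gronwall inequality therefore delivers $|k\nabla e_p^{r+1}|^2\le C\big(|k\nabla e_p^0|^2+k^2\big)\le Ck^2$ uniformly in $r$, i.e.\ $\|\nabla e_p^{m+1}\|_{l^\infty(L^2)}\le C$ directly. This is also why the hypothesis $|\nabla e_p^0|\le C$ enters (as $|k\nabla e_p^0|^2\le Ck^2$ in the Gronwall constant $B$), and why $|k\nabla e_p^m|^2$ must be carried as part of the Gronwall iterate in the first place, since the convective bound feeds $Ck\big(|k\nabla e_p^m|^2+|k\nabla e_p^{m-1}|^2\big)$ back into the right-hand side.
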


\begin{proof}
The proof follows similar lines of \cite{gq} and \cite{shen}.

\noindent 
By multiplying $(E_1)^ {m+1}$ by $ 2 \, k\,\etilde$ and
integrating in $\Omega$, one has:
\begin{equation}\label{23}
\begin{array}{l}
|\etilde|^2 -|{\bf e}^{m}|^2 + |\etilde-{\bf e}^{m}|^2 + 2\, k
\|\etilde\|^2 +2\, k\, \Big ( \nabla  e_{p}^m   , \etilde \Big )
\\
\noalign{\smallskip} 
\quad =  2 \, k \, \Big( \mathcal{E}^{m+1} + {\bf NL}^{m+1}, \etilde
\Big) -2\, k^2\, \Big (
\nabla  \delta_t  p
(t_{m+1}))   , \etilde \Big ).
\end{array}
\end{equation}
On the other hand, multiplying $(E_2)^{m+1}$ by $k ({\bf
e}^{m+1}+\etilde) + k^2\, \Big ( \nabla e_{p}^{m+1} + \nabla
e_{p}^m \Big )$ and using that $\Big ( \ee, \nabla e_p^{m+1} \Big )
=0=\Big ( \ee,
\nabla e_p^m \Big ) =\Big ( \ee, \nabla \delta_t p(t_{m+1}) \Big ) $
(see 
(\ref{L2-orthog})), we obtain
\begin{equation}\label{26}
\begin{array}{c}
\displaystyle |{\bf e}^{m+1}|^2 - |\etilde |^2 +  
|k\, \nabla e_{p}^{m+1}|^2 - |k\, \nabla e_{p}^m |^2 
- 2\, k \,
\Big( \etilde , \nabla e_{p}^m  \Big )
\\
\quad = k^2\, \Big ( \etilde , \nabla\delta_t p(t_{m+1})\Big ) +k^3 \Big ( \nabla \delta_t p(t_{m+1}), \nabla e_p ^{m+1} +
\nabla e_p ^m    \Big ).
\end{array}
\end{equation}
By adding   $ (\ref{23})$ and $(\ref{26})$,
the term $2\, k \Big ( \etilde , \nabla e_p^m \Big )$ vanish,
 obtaining
\begin{equation}\label{24}
\begin{array}{l}
|\ee|^2 -|{\bf e}^{m}|^2 +
|k\, \nabla e_{p}^{m+1}|^2 - |k\, \nabla e_{p}^m |^2 
  + |\etilde-{\bf e}^{m}|^2 + 2\, k
\|\etilde\|^2 
\\
\noalign{\smallskip}
\quad  \le  2 \, k \, \Big( \mathcal{E}^{m+1} + {\bf NL}^{m+1}, \etilde \Big) - k^2\, \Big (
\nabla  \delta_t  p(t_{m+1}))   , \etilde \Big )
\\
\noalign{\smallskip}
\quad +k^3 \Big ( \nabla \delta_t p(t_{m+1}), \nabla e_p ^{m+1} +
\nabla e_p ^m    \Big ).
\end{array}
\end{equation}
The consistency error can be bounded as
follows:
$$
  2 \, k \, \Big( \mathcal{E}_1^{m+1}, \etilde \Big) \le 
\displaystyle \frac{k}{3} \, \| \etilde \| ^2
 + C \,k^2 \int_{t_m}^{t_{m+1}}  \|  {\bf u}_{tt}
\| _ {{\bf H}^{-1}}^2 \, dt ,
$$
$$ 2\, k \, \Big ( \mathcal{E}^{m+1}_2 , \etilde \Big )
\le 2\, k\, \Big ( \int_{t_m}^{t_{m+1}} | {\bf u}_t| \Big ) \, \| \nabla {\bf
u} (t_{m+1}) \|_{L^3} \| \etilde \|_{L^6}
 \le \frac{k}{3} \, \| \etilde \|^2 +
C\,k^2 \, \int_{t_m
  }^{t_{m+1}} | {\bf u}_t |^2 .
$$

 By using the antisymmetry property $c(\widetilde{\bf u}^m , \etilde
 ,\etilde )=0$ and equality (\ref{continuous-dependence-errorsa}), we bound  the convective terms as follows:
 $$  2\, k\, \Big( {\bf NL}^{m+1},
\etilde \Big)= 2\, k \,c \Big( \widetilde{\bf e}^m, {\bf u}(t_{m+1}),
\etilde \Big)        \leq \displaystyle\frac{k}{3} \|\etilde \| ^2
+ C\,  k \, \|{\bf u} (t_{m+1})\|_{L^{\infty} \cap W^{1,3}} ^2
 |\widetilde{\bf e}^m| ^2
$$
$$\le \frac{k}{3} \| \etilde \|^2+C\, k\, |{\bf e}^m|^2+C\, k \, \Big
( |k\, \nabla e_p^m |^2 + |k\, \nabla e_p^{m-1}|^2 +k^2\, |\nabla
\delta_t p(t_{m}) |^2 \Big )
$$

Now, by using that $\Big({\bf e}^m , \nabla\delta_t
p(t_{m+1}) \Big)=0$, we bound the third term at RHS of (\ref{24}):
$$
 -k^2\, \Big (\etilde , \nabla\delta_t p(t_{m+1}) \Big ) =- k^2\, \Big (
 \etilde-{\bf e}^m
 , \nabla\delta_t p(t_{m+1})\Big )
\le \frac14 \,  | \etilde -{\bf e}^m |^2 +C\, k^4 \,|\nabla\delta_t p(t_{m+1})  |^2.
$$
 Finally, we bound the last term at RHS of  (\ref{24}):
$$
k^3 \Big ( \nabla \delta_t p(t_{m+1}), \nabla e_p ^{m+1} +
\nabla e_p ^m    \Big )  = k^3\,  \Big ( \nabla \delta_t p(t_{m+1}),
\nabla e_p ^{m+1} -\nabla e_p^m \Big )+ k^3 \,\Big ( \nabla \delta_t p(t_{m+1}),
  2\, \nabla e_p ^m    \Big )=I_1+I_2
$$
By using $(E_2)^{m+1}$, the  $I_1$-term can be rewritten as
$$I_1 =  k^2\,  \Big ( \nabla \delta_t p(t_{m+1}), \etilde -\ee \Big )+k^4
\, |\nabla \delta_t p(t_{m+1}) |^2 
$$
$$= k^2\,  \Big ( \nabla \delta_t
p(t_{m+1}), \etilde -{\bf e}^m \Big )+k^4
\, |\nabla \delta_t p(t_{m+1}) |^2
$$
$$
\le \frac14 \, | \etilde - {\bf e}^m |^2+ C\, k^3
\int_{t_m}^{t_{m+1}} | \nabla  p_t|^2
$$
We bound $I_2$ as:
$$I_2 \le C\, k\, | k\, \nabla e_p^m | ^2+ C\,k^3 \, | \nabla \delta_t
p(t_{m+1}))| ^2\le C\, k\, | k\, \nabla e_p^m | ^2+ C\,k^2
\int_{t_m}^{t_{m+1}} | \nabla  p_t|^2
$$
By applying these bounds in  (\ref{24}), 
$$
\begin{array}{l}
|\ee|^2 -|{\bf e}^{m}|^2 +
|k\, \nabla e_{p}^{m+1}|^2 - |k\, \nabla e_{p}^m |^2 
  +\displaystyle\frac{1}{2} |\etilde-{\bf e}^{m}|^2 +  k
\|\etilde\|^2 
 \le 
 \displaystyle  C \, k\, |   {\bf e}^m |^2 \\
\noalign{\smallskip}
\displaystyle 
\quad +  C\,k^2
 \int_{t_m}^{t_{m+1}} \Big( \| {\bf u}_{tt} \|_ {{\bf H}^{-1}}^2 +
  |{\bf u}_t |^2 +  | \nabla  p_t|^2 \Big) dt +C\, k\, \Big (| k\, \nabla e_p^m|^2+ | k\, \nabla
  e_p^{m-1}|^2 \Big ).
\end{array}
$$
Adding up from $m=1$ to $r$, 
%
and  applying the  discrete Gronwall inequality, we arrive at:
$$  \|{\bf
  e}^{m+1}\|_{l^{\infty}({\bf L}^2)}
+ \|\etilde\|_{l^{2}({\bf H}^1)}  \leq C\,k, \quad \| \etilde - {\bf
  e}^m\|_{l^{2}({\bf L}^2)} \le C\, k^{3/2} \quad \mbox{ and  }\quad  \| e_{p}^{m+1}
\|_{l^{\infty}(H^1)} \le C .
$$
Finally,  by applying Lemma~\ref{regularidad-errores}, estimates $\|\etilde\|_{l^{\infty}({\bf L}^2)}\le C\, k $ and $\|{\bf e}^{m+1}\|_{l^{2}({\bf H}^1)} \leq  C\,k$ hold.
%
%
\end{proof}

Notice that the error estimate $\|\widetilde {\bf e}^m\|_{l^2({\bf H}^1)}\le
C\,k$ implies in particular the uniform estimates
$$\|\widetilde{\bf e}^m\|_{{\bf H}^1}\le C
\q\hbox{and}\q \|\widetilde{\bf u}^m\|_{{\bf H}^1}\le C\q  \forall\, m.$$

\subsection{$O(k)$-error estimates for the  pressure}

First, we are going to obtain error estimates for the discrete time derivative of velocity, and then the optimal order $O(k)$ for the pressure.
\begin{lemma}[Continuous
  dependence of discrete derivatives for the projection step] \label{regularidad-derivadas-errores} 
  
 \noindent It holds
\begin{equation}\label{dependence-deltaerrors-1}
|\delta_t \etilde |^2= | \delta_t \ee |^2+ | k\, \nabla \delta_t
(e_p^{m+1}-e_p^m -k\, \delta_t p(t_{m+1}) )|^2 , 
\end{equation}
$$
 | \delta_t \ee
-\delta_t \etilde | \le | \delta_t \etilde -
\delta_t {\bf e}^m | ,
$$
$$\| \delta_t {\bf e}^{m+1} \| \leq C\,  \| \delta_t \etilde\| .
$$
\end{lemma}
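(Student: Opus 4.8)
The plan is to exploit the linearity of Sub-step~2: since the system $(E_2)^{m+1}$ satisfied by the projection errors is a linear projection-type system, it commutes with the discrete-in-time difference operator $\delta_t$, and the statement then reduces to the very arguments already used in Lemma~\ref{regularidad} and Lemma~\ref{regularidad-errores}.

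First I would write $(E_2)^{m+1}$ and $(E_2)^{m}$, subtract them and divide by $k$; since $\delta_t$ commutes with $\nabla$ and with the constraints $\nabla\cdot(\,\cdot\,)=0$ and $(\,\cdot\,)\cdot{\bf n}|_{\partial\Omega}=0$, this yields
$$
\frac{1}{k}\big(\delta_t{\bf e}^{m+1}-\delta_t\widetilde{\bf e}^{m+1}\big)
+\nabla\,\delta_t\big(e_p^{m+1}-e_p^m-k\,\delta_t p(t_{m+1})\big)={\bf 0}
\q\mbox{in }\Omega,
$$
together with $\nabla\cdot\delta_t{\bf e}^{m+1}=0$ in $\Omega$ and $\delta_t{\bf e}^{m+1}\cdot{\bf n}|_{\partial\Omega}={\bf 0}$. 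This is exactly the system $(E_2)^{m+1}$ (of the same form as $(S_2)^{m+1}$) with $\widetilde{\bf e}^{m+1}$, ${\bf e}^{m+1}$ and the pressure increment replaced by their $\delta_t$'s; hence $\delta_t{\bf e}^{m+1}=P_{{\bf H}}\,\delta_t\widetilde{\bf e}^{m+1}$, with $P_{{\bf H}}$ the $L^2(\Omega)$-orthogonal projection onto ${\bf H}$, and $k\,\nabla\,\delta_t(e_p^{m+1}-e_p^m-k\,\delta_t p(t_{m+1}))$ its complementary gradient part.

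Then the three assertions follow verbatim from Lemmas~\ref{regularidad}--\ref{regularidad-errores}. Identity (\ref{dependence-deltaerrors-1}) is the Pythagoras identity of this orthogonal decomposition, as in (\ref{continuous-dependence-a}) and (\ref{continuous-dependence-errorsa}). For the second inequality, since ${\bf e}^m={\bf u}(t_m)-{\bf u}^m\in{\bf H}$ for all $m$ (both terms being solenoidal with vanishing normal trace), one has $\delta_t{\bf e}^m\in{\bf H}$, and the best-approximation property of $P_{{\bf H}}$ gives
$$
|\delta_t{\bf e}^{m+1}-\delta_t\widetilde{\bf e}^{m+1}|
=\min_{{\bf v}\in{\bf H}}|{\bf v}-\delta_t\widetilde{\bf e}^{m+1}|
\le|\delta_t{\bf e}^m-\delta_t\widetilde{\bf e}^{m+1}|.
$$
For the third one, $\widetilde{\bf e}^{m+1}={\bf u}(t_{m+1})-\widetilde{\bf u}^{m+1}\in{\bf H}_0^1(\Omega)$, hence $\delta_t\widetilde{\bf e}^{m+1}\in{\bf H}_0^1(\Omega)$, and applying the ${\bf H}^2(\Omega)$-regularity of the Poisson--Neumann problem (hypothesis ${\bf (H0)}$) solved by $\delta_t(e_p^{m+1}-e_p^m-k\,\delta_t p(t_{m+1}))$ (the $\delta_t$ of the error version of $(S_2)_a^{m+1}$) yields $k\,\|\nabla\,\delta_t(e_p^{m+1}-e_p^m-k\,\delta_t p(t_{m+1}))\|_{H^1}\le C\,\|\delta_t\widetilde{\bf e}^{m+1}\|$; combining this with the identity above gives $\|\delta_t{\bf e}^{m+1}\|\le C\,\|\delta_t\widetilde{\bf e}^{m+1}\|$, i.e.\ the ${\bf H}^1$-stability of $P_{{\bf H}}$ applied to $\delta_t\widetilde{\bf e}^{m+1}$.

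There is essentially no analytic obstacle: everything rests on Sub-step~2 being linear (so that it commutes with $\delta_t$) and on $\delta_t$ preserving the relevant function-space structure (solenoidality, vanishing normal trace, the ${\bf H}_0^1$ boundary condition) used in Lemmas~\ref{regularidad}--\ref{regularidad-errores}. The only items deserving attention are the bookkeeping of the pressure increments --- checking that the $\delta_t$ of the pressure term in $(E_2)^{m+1}$ is precisely the gradient component orthogonal to ${\bf H}$ --- and the tacit restriction $m\ge1$, needed so that $(E_2)^m$ is available; for $m=0$ the relevant quantities are fixed by the initialization and the statement is not invoked.
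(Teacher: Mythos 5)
Your proposal is correct and follows essentially the same route as the paper, whose proof consists precisely of the observation $\delta_t {\bf e}^{m+1}=P_{{\bf H}}(\delta_t \widetilde{\bf e}^{m+1})$ followed by the arguments of Lemmas~\ref{regularidad} and~\ref{regularidad-errores}. You merely make explicit the details the paper leaves implicit (commutation of $\delta_t$ with the linear projection structure, $\delta_t{\bf e}^m\in{\bf H}$, and the restriction $m\ge 1$), all of which are accurate.
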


\begin{proof}
The proof is similar to Lemma \ref{regularidad} and Lemma
\ref{regularidad-errores},  using that $\delta_t \ee= P_{{\bf
H}} (\delta_t \etilde)$.
\end{proof}

\begin{theorem}\label{lv12}
Assuming hypotheses of Theorem \ref{v1},  $({\bf H2})$ and the
following constraints on the first-step approximation
$$
| \delta_t {\bf e}^{1} | + | k \,
 \nabla \delta_t e_{p}^1 | \le C\, k 
 \q\hbox{and}\q
\| \delta_t \widetilde{\bf e}^{1} \|
\le C\, \sqrt{k},
$$
 one has
$$\|\delta_t {\bf e}^{m+1} \| _{l^{\infty}({\bf L}^2)\cap l^2 ({\bf H}^1)} + \| \delta _t
\etilde \| _{l^{\infty}({\bf L}^2)\cap l^2 ({\bf H}^1)} \le C \, k
\quad \mbox{and} \quad \| \delta_t e_p^{m+1} \| _{l^{\infty}( H^1)} \le C.
  $$
\end{theorem}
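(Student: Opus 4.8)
The plan is to repeat, at the level of the discrete-in-time derivatives, the energy argument of Theorem~\ref{v1}. First I would apply the difference operator $\delta_t$ to $(E_1)^{m+1}$ and to $(E_2)^{m+1}$. For $m\ge 1$ this gives a convection--diffusion problem for $\delta_t\widetilde{\bf e}^{m+1}$, namely
$$\frac1k(\delta_t\widetilde{\bf e}^{m+1}-\delta_t{\bf e}^m)-\Delta\delta_t\widetilde{\bf e}^{m+1}+\nabla\bigl(\delta_t e_p^m+k\,\delta_t^2 p(t_{m+1})\bigr)=\delta_t\mathcal E^{m+1}+\delta_t{\bf NL}^{m+1}\quad\text{in }\Omega,$$
with $\delta_t\widetilde{\bf e}^{m+1}|_{\partial\Omega}={\bf 0}$, together with a differentiated projection relation for $(\delta_t{\bf e}^{m+1},\delta_t e_p^{m+1})$ coming from $(E_2)^{m+1}$; here $\delta_t^2 p(t_{m+1})$ denotes the second divided difference of $p$ and, by the difference product rule,
$$\delta_t{\bf NL}^{m+1}=-C(\delta_t\widetilde{\bf e}^m,{\bf u}(t_{m+1}))-C(\widetilde{\bf e}^{m-1},\delta_t{\bf u}(t_{m+1}))-C(\delta_t\widetilde{\bf u}^m,\widetilde{\bf e}^{m+1})-C(\widetilde{\bf u}^{m-1},\delta_t\widetilde{\bf e}^{m+1}).$$
Adding the two differentiated equations reproduces a differentiated $(E_3)^{m+1}$, and Lemma~\ref{regularidad-derivadas-errores} plays here the role that Lemma~\ref{regularidad-errores} played in Theorem~\ref{v1}.

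Next I would run the energy estimate exactly as in (\ref{23})--(\ref{24}): test the differentiated $(E_1)$-equation by $2k\,\delta_t\widetilde{\bf e}^{m+1}$ and the differentiated $(E_2)$-equation by $k(\delta_t{\bf e}^{m+1}+\delta_t\widetilde{\bf e}^{m+1})+k^2(\nabla\delta_t e_p^{m+1}+\nabla\delta_t e_p^m)$. The orthogonality $(\delta_t{\bf e}^{m+1},\nabla q)=0$ for all $q\in H^1(\Omega)$ follows from (\ref{L2-orthog}) at levels $m$ and $m+1$, and the convective term $C(\widetilde{\bf u}^{m-1},\delta_t\widetilde{\bf e}^{m+1})$ disappears after testing by the antisymmetry (\ref{antisim}), $c(\widetilde{\bf u}^{m-1},\delta_t\widetilde{\bf e}^{m+1},\delta_t\widetilde{\bf e}^{m+1})=0$. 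Adding the two identities cancels the cross pressure term $2k(\delta_t\widetilde{\bf e}^{m+1},\nabla\delta_t e_p^m)$ and produces, as in (\ref{24}), a telescoping inequality for $|\delta_t{\bf e}^{m+1}|^2+|k\,\nabla\delta_t e_p^{m+1}|^2$ with the dissipation $k\|\delta_t\widetilde{\bf e}^{m+1}\|^2$ and the square differences $|\delta_t\widetilde{\bf e}^{m+1}-\delta_t{\bf e}^m|^2$ on the left-hand side.

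Then I would bound the right-hand side term by term. The consistency part $\delta_t\mathcal E^{m+1}=\delta_t\mathcal E_1^{m+1}+\delta_t\mathcal E_2^{m+1}$ is a second divided difference of the local truncation errors; after Cauchy--Schwarz in time it is controlled by $k^2\int_{t_{m-1}}^{t_{m+1}}(\|{\bf u}_{ttt}\|_{{\bf H}^{-1}}^2+|{\bf u}_{tt}|^2+|\nabla p_t|^2)\,dt$, which sums to $O(k^2)$ exactly under the extra regularity $({\bf H2})$ (in particular ${\bf u}_{ttt}\in L^2({\bf H}^{-1})$, ${\bf u}_{tt}\in L^2({\bf L}^2)$); the pressure term $k\,\delta_t^2 p(t_{m+1})$ and the terms $k^3(\nabla\delta_t^2 p(t_{m+1}),\ldots)$ are treated with the $I_1,I_2$-splitting of Theorem~\ref{v1}, using now $p_{tt}\in L^2(H^1)$. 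For the convective terms one writes $\delta_t\widetilde{\bf u}^m=\delta_t{\bf u}(t_m)-\delta_t\widetilde{\bf e}^m$: the term $C(\delta_t\widetilde{\bf e}^m,{\bf u}(t_{m+1}))$ gives a Gronwall contribution $C\,k\,|\delta_t\widetilde{\bf e}^m|^2$ (using ${\bf u}\in L^\infty({\bf H}^2)$, hence $\|{\bf u}(t_{m+1})\|_{W^{1,3}\cap L^\infty}\le C$); the remaining pieces are bounded by Hölder plus interpolation inequalities (expressing $\|\cdot\|_{L^3},\|\cdot\|_{L^4}$ through $|\cdot|$ and $\|\cdot\|$), invoking $\|\widetilde{\bf u}^m\|_{{\bf H}^1}\le C$, $\|\delta_t{\bf u}\|_{l^\infty({\bf L}^3)}\le C$ (from $({\bf H2})$) and, crucially, the smallness $\|\widetilde{\bf e}^m\|_{l^\infty({\bf L}^2)}+\|\widetilde{\bf e}^m\|_{l^2({\bf H}^1)}\le C\,k$ and $\|e_p^m\|_{l^\infty(H^1)}\le C$ already established in (\ref{est-errores-velocidad})--(\ref{est-diferencia-errores}); this makes their summed contribution $O(k^2)$ while leaving at most one factor $\|\delta_t\widetilde{\bf e}^{m+1}\|$, absorbed into the dissipation. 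Summing from $m=1$ to $r$, the boundary terms at $m=1$ are $O(k^2)$ by the first-step hypotheses $|\delta_t{\bf e}^1|+|k\,\nabla\delta_t e_p^1|\le C\,k$ and $\|\delta_t\widetilde{\bf e}^1\|\le C\sqrt k$, and the discrete Gronwall inequality (Lemma~\ref{GronwallD}), with $a_m=|\delta_t{\bf e}^m|^2$, yields $\|\delta_t{\bf e}^{m+1}\|_{l^\infty({\bf L}^2)}+\|\delta_t\widetilde{\bf e}^{m+1}\|_{l^2({\bf H}^1)}\le C\,k$ and $\|k\,\nabla\delta_t e_p^{m+1}\|_{l^\infty(L^2)}\le C\,k$, i.e.\ $\|\delta_t e_p^{m+1}\|_{l^\infty(H^1)}\le C$; finally Lemma~\ref{regularidad-derivadas-errores} upgrades this to $\|\delta_t\widetilde{\bf e}^{m+1}\|_{l^\infty({\bf L}^2)}\le C\,k$ and $\|\delta_t{\bf e}^{m+1}\|_{l^2({\bf H}^1)}\le C\,k$.

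The main obstacle is the treatment of the differentiated convective terms. The quantity $\delta_t\widetilde{\bf u}^m$ is controlled only through $\delta_t\widetilde{\bf e}^m$, which is precisely what is being estimated, so no inverse estimate (and hence no constraint on $k$) may be used; after splitting off the exact part $\delta_t{\bf u}(t_m)$, the genuinely quadratic contributions in $\delta_t\widetilde{\bf e}$ must be organized so that every one of them either vanishes by antisymmetry, carries at most one full gradient norm (absorbed by $k\|\delta_t\widetilde{\bf e}^{m+1}\|^2$ or handed to Gronwall via $|\delta_t\widetilde{\bf e}^m|^2$), or pays its ``loss'' with the $O(k)$ smallness of $\widetilde{\bf e}$ in $l^\infty({\bf L}^2)\cap l^2({\bf H}^1)$ proven in Theorem~\ref{v1}. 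Making all of these estimates close simultaneously, without any additional condition on $k$, is the delicate point of the proof.
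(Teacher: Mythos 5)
Your proposal follows essentially the same route as the paper's proof: differencing $(E_1)^{m+1}$ and $(E_2)^{m+1}$, testing with $2k\,\delta_t\widetilde{\bf e}^{m+1}$ and $k(\delta_t{\bf e}^{m+1}+\delta_t\widetilde{\bf e}^{m+1})+k^2(\nabla\delta_t e_p^{m+1}+\nabla\delta_t e_p^m)$, the same four-term splitting of $\delta_t{\bf NL}^{m+1}$ with the last term killed by antisymmetry, the consistency bounds under $({\bf H2})$, discrete Gronwall, and the upgrade via Lemma~\ref{regularidad-derivadas-errores}. The delicate point you flag (the quadratic terms in $\delta_t\widetilde{\bf e}$, handled by splitting off $\delta_t{\bf u}(t_m)$, interpolating the $L^3$-norm, and paying with the Theorem~\ref{v1} smallness and the first-step hypothesis $\|\delta_t\widetilde{\bf e}^1\|\le C\sqrt k$) is exactly how the paper closes the estimate.
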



\begin{proof} By making $\delta_t (E_1)^{m+1}$ and
$\delta_t (E_2)^{m+1}$:
$$ \left.  \begin{array}{c}
\displaystyle\frac{\delta_t \etilde -\delta_t {\bf e}^{m}}{k}
-\Delta  \delta_t \etilde +  \nabla (\delta_t  e_{p}^m +k\,
\delta_t \delta_t p (t_{m+1}))=
 \delta_t \mathcal{E}^{m+1} +\delta_t {\bf NL}^{m+1}
\end{array}
\right.\leqno (D_1)^{m+1}$$
 where $\delta_t \delta_t p(t_{m+1}) = \displaystyle\frac{1}{k}
(\delta_t p(t_{m+1}) -\delta_t p(t_{m}))$, and
$$ \left.  \begin{array}{c}
\displaystyle\frac{\delta_t {\bf e}^{m+1} -\delta_t \etilde}{k} +
\nabla  ( \delta_t e_{p}^{m+1}- \delta_t  e_{p}^m  -k\, \delta_t
\delta_t p(t_{m+1})  ) =0.
\end{array}
\right. \leqno (D_2)^{m+1} $$ The proof follows similar lines of
Theorem \ref{v1}. Multiplying $(D_1)^{m+1}$ by $2\,k\,
\delta_t \etilde $, we get:
\begin{equation}\label{est-derivatives}
\begin{array}{lcr}
 |  \delta_t  \etilde | ^2 -|  \delta_t  {\bf e}^m | ^2 +
 |  \delta _t  \etilde -\delta_t  {\bf e}^m | ^2 + 2\,
 k \|  \delta_t \etilde \| ^2
 +2\, k \Big ( \nabla  \delta_ t e_{p}^m ,  \delta_t \etilde \Big )
 &&\\ 
 \quad =2\, k\,  \Big (\delta_t \mathcal{E}^{m+1} + \delta_t  {\bf NL}^{m+1},\delta_t \etilde
\Big )-2\, k^2\, \Big (\nabla\delta_t\delta_t p (t_{m+1})) , \delta_t \etilde \Big ).&&
\end{array}
\end{equation}
On the other hand,  multiplying $(D_2)^{m+1}$ by $k\, (\delta_t
{\bf e}^{m+1} + \delta_t \etilde )+ k^2 \, (\nabla \delta_t
e_{p}^{m+1} +\nabla \delta_t e_{p}^{m})$,
\begin{equation}\label{8n}
 \begin{array}{lcr}
&& | \delta_t {\bf e}^{m+1} |^2 - | \delta _t \etilde | ^2  +
  |k\, \nabla \delta_t e_{p}^{m+1} | ^ 2 -
 |k\, \nabla \delta_t e_{p}^{m} | ^ 2 -2\, k \, \Big ( \delta_t \etilde,
 \nabla  \delta_t e_{p}^m  \Big )\\
&& =k^2 \,  \Big (  \nabla  \delta_t \delta_t p (t_{m+1}),\delta_t \etilde
 \Big ) +k^3 \, \, \Big (\nabla \delta_t \delta_t p(t_{m+1}) , \nabla
 \delta_t e_p^{m+1} +\nabla \delta_t e_p^m  \Big )  .
\end{array}
\end{equation}
By adding (\ref{est-derivatives}) and (\ref{8n}), the term $2\, k \, \Big ( \delta_t \etilde, \nabla  \delta_t e_{p}^m  \Big )$ cancels, arriving at
\begin{equation}\label{suma-est-derivatives}
\begin{array}{lcr}
 |  \delta_t  \ee | ^2 -|  \delta_t  {\bf e}^m | ^2 +
 |  \delta _t  \etilde -\delta_t  {\bf e}^m | ^2  +
  |k\, \nabla \delta_t e_{p}^{m+1} | ^ 2 -
 |k\, \nabla \delta_t e_{p}^{m} | ^ 2 + 2\,
 k \|  \delta_t \etilde \| ^2
 &&\\ 
 \quad = 2\, k\,  \Big (\delta_t \mathcal{E}^{m+1} + \delta_t  {\bf NL}^{m+1}  ,\delta_t \etilde
\Big )&& \\
\quad - k^2\, \Big (\nabla\delta_t\delta_t p (t_{m+1})) , \delta_t \etilde \Big )+k^3 \, \, \Big (\nabla  \delta_t e_p^{m+1} +\nabla \delta_t e_p^m, \nabla \delta_t \delta_t p(t_{m+1})   \Big )  .&&
\end{array}
\end{equation}
We bound the RHS of (\ref{suma-est-derivatives}) as follows:
$$
2\,k \Big( \delta_t\mathcal{E}^{m+1}_1  , \delta_t  \etilde
\Big) \le
  \varepsilon \,k  \| \delta_t \etilde \| ^2
+ C\, k^2 \int_{t_{m-1}}^{t_{m+1}} \|
 {\bf u}_{ttt} \|_{H^{-1}}^2
$$
\begin{eqnarray*}
2\,k \Big( \delta_t \mathcal{E}^{m+1}_2  , \delta_t  \etilde
\Big) &=& 2\, k \Big( \delta _t  {\bf u}(t_{m+1}) \cdot \nabla
({\bf u}(t_{m+1}) -{\bf u} (t_m)) , \delta _t  \etilde \Big)
 \\
  &+& 2\, k \Big( ( \delta _t  {\bf u}(t_{m+1})
 -  \delta_t  {\bf u}(t_m) )
\cdot \nabla {\bf u} (t_m) , \delta_t \etilde \Big):=I_1+I_2
\end{eqnarray*}
$$I_1  \le \varepsilon  k \| \delta_t \etilde \|^2 +
C\,k \| \delta _t {\bf u}(t_{m+1}) \|_{L^{3}}^2
  \|\int_{t_m}^{t_{m+1}} \partial_t {\bf u} \|_{H^1}^2
\le \varepsilon  k \| \delta_t \etilde \|^2 + C\, k^2 \| {\bf u}_t
\| _{L^{\infty}(L^3)} \int_{t_m}^{t_{m+1}} \| {\bf u}_t \|^2
_{H^{1}}
$$
$$I_2\le \varepsilon \, k \| \delta_t \etilde \|^2 +
C\,k  \, \| \nabla{\bf u}(t_{m})\|_{L^3}^2 \,
 \Big | \delta_t \Big ( \displaystyle \int_{t_m}^{t_{m+1}} {\bf u}_t  \Big ) \Big |^2
 \le
\varepsilon \, k \| \delta_t \etilde \|^2 +C\, k^2 \,
\int_{t_{m-1}}^{t_{m+1}}  | {\bf u}_{tt} |^2
$$
(in the above inequality we have used estimates obtained in \cite{shen2}).

Now, we bound the non-linear terms:
\begin{eqnarray*}
&&2 \, k\, \Big ( \delta_t {\bf NL}^{m+1}, \delta_t \etilde
\Big )= 2\, k\, c \Big ( \delta _t  \widetilde{\bf e}^m , {\bf
u}(t_{m+1}), \delta _t  \etilde       \Big )
 +2\,k\, c \Big (\delta_t \widetilde{\bf u}^m ,
\etilde, \delta _t  \etilde  \Big )
 \\
 &&\quad +2\,k\, c \Big (
\widetilde{\bf e}^{m-1} , \delta_t {\bf u}(t_{m+1}) , \delta _t
\etilde    \Big )
 + 2\,k\,c \Big ( \widetilde{\bf u} ^{m-1} ,   \delta _t \etilde,
   \delta_t \etilde \Big ):=\sum_{i=1}^4 L_i
\end{eqnarray*}
$$L_1\le k\, |   \delta_t \widetilde{\bf e}^m | \, \|{\bf u}
(t_{m+1})\| _{L^{\infty}\cap W^{1,3}}\, \|  \delta_t \etilde\|
\le \varepsilon \, k\, \|  \delta_t \etilde\| ^2 + C \, k  \, |
\delta_t \widetilde{\bf e}^m   | ^2  $$
$$\le  \varepsilon \, k\, \|  \delta_t \etilde\| ^2 + C \, k  \, |
\delta_t {\bf e}^m   | ^2 +C\,k \, \Big (  | k\, \nabla \delta_t e_p^m|^2+ |k\, \nabla \delta_t e_p^{m-1}| ^2 +C\,k^2 \, | \nabla \delta_t \delta_t
p(t_{m})|^2 \Big )
$$
(here  $(\ref{dependence-deltaerrors-1})$ is used),
$$L_2=2\, k\, c \Big (  \delta_t \widetilde {\bf e}^m ,
\etilde, \delta_t \etilde\Big )+ 2\,k\,  c \Big ( \delta_t
{\bf u}(t_{m}), \etilde, \delta_t \etilde\Big
):=L_{21}+L_{22} $$
$$ L_{21} \le 2\, k\, \|\etilde \| \,\| \delta_t \widetilde {\bf e}^m \|_{L^3}  \|
\delta_t \etilde  \| 
\le \varepsilon \, k \Big( \|  \delta_t \etilde \| ^2  + 
\| \delta_t \widetilde {\bf e}^m \|^2 \Big)+ C\, k \,|  \delta_t \widetilde {\bf e}^m |^2
$$
where we have used that $\|\etilde \|\le C$,
$$ L_{22}\le k\,
 \| \delta_t {\bf u}(t_{m}) \| _{L^{3}}\,\| \delta_t \etilde \| \, \|\etilde \|
 \leq \varepsilon \, k\, \|  \delta_t \etilde \| ^2 + C \, k\,
\|\etilde \|^2 $$ (in the above estimate we have used the
regularity ${\bf u}_t \in L^\infty({\bf L}^{3})$), and from a
similar way,
$$ L_3\leq  \varepsilon \, k\, \| \delta_t \etilde \| ^2 + C \,
k\, \|\widetilde{\bf e} ^{m-1}\|^2 .$$

\vspace{.3cm}
\noindent
Finally,
$$L_4 =0.$$
 Reasoning as in Theorem~\ref{v1}, taking into account the above estimates and choising  $\varepsilon$
small enough, we arrive at
$$
\begin{array}{lcr}
|  \delta_t  \ee | ^2 -|  \delta_t {\bf e}^m | ^2 +
\displaystyle\frac{1}{2} |  \delta _t  \etilde -\delta_t  {\bf e}^m | ^2 +
  |k\, \nabla \delta_t e_{p}^{m+1} | ^ 2 -
 |k\, \nabla \delta_t e_{p}^{m} | ^ 2 + 
 k \|  \delta_t \etilde \| ^2
&&\\
\noalign{\smallskip} \dis\le   C\, k |\delta_t {\bf e}^m
|^2  +C\, k^2  \int_{t_{m-1}}^{t_{m+1}} \Big(\| {\bf u}_{ttt} \|
_{H^{-1}}^2 + | {\bf u}_{tt} |^2 \Big )+ C\, k^2 \,   \displaystyle  \int_{t_{m}}^{t_{m+1}}\| {\bf u}_t \|^2 + \frac{k}2 \| \delta_t \widetilde {\bf e}^m \|^2
&&\\
\noalign{\smallskip}
+ C\,
k\Big( \|\widetilde{\bf e} ^{m-1}\|^2
+ \|\widetilde{\bf e}^{m+1}\|^2 \Big)
 +C\,k \, \Big (  | k\, \nabla \delta_t e_p^m|^2+ |k\, \nabla \delta_t
e_p^{m-1}| ^2 )\Big )+C\,k^2 \,\displaystyle\int_{t_{m-1}}^{t_{m+1}} | \nabla
p_{tt} |^2 .&&
\end{array}
$$
 Now, by adding from $m=1$ to $r$ and
  using error estimates of Theorem~\ref{v1},
  we arrive at
 \begin{eqnarray*}
&& | \delta_t  {\bf e}^{r+1} |^2 + |k\,\nabla \delta_t  e_{p}^{r+1} |^2
 + \frac{1}{2}\sum_{m=1}^r  | \delta_t
 \etilde - \delta_t  {\bf e}^m |^2 
 +  \frac{k}{2} \sum_{m=1}^r \| \delta_t \etilde \| ^2  
 \\ &&  \le 
 | \delta_t {\bf e}^1 |^2 + |k\,\nabla \delta_t e_{p} ^1  |^2 +
 \frac{k}2 \| \delta_t \widetilde {\bf e}^1 \|^2
+ C\, k \sum_{m=1}^r \Big ( |
\delta_t {\bf e}^m |^2 + |k\, \nabla
\delta_t e_p^m| ^2 + |k\, \nabla
\delta_t e_p^{m-1}| ^2 \Big )
 +C\, k^2 .
 \end{eqnarray*}


Then,  applying the discrete Gronwall Lemma, we obtain the estimates
$$
\| \delta_t {\bf e}^{m+1}\|_{l^{\infty}(L^2)} \le C\, k , \quad
\sum_{m=1}^r  | \delta_t  \etilde - \delta_t  {\bf e}^m |^2\le C\,
k^2,\q \| \delta_t \widetilde{\bf e}^{m+1}\|_{l^{2}(H^1)} \le C\,
k $$
$$\quad \mbox{and}\quad  \|  \delta_t e_p^{m+1} \|
_{l^{\infty}(H^1)}\le C .$$
 After that,  taking into account
Lemma \ref{regularidad-derivadas-errores},
$$ \|\delta_t \ee \| _{l^{2}(H^1)} \le C\, k  \quad 
\sum | \delta_t \etilde - \delta_ t \ee | ^2 \le C\, k^2
 \quad \mbox{and}\quad
 \| \delta_t \widetilde{\bf e}^{m+1}\|_{l^{\infty}(L^2)} \le C\, k
   $$
hence the proof is finished.
\end{proof}









\begin{theorem}\label{cor1} Under hypothesis of Theorem \ref{lv12} and
  $({\bf H3})$,
the following error estimates hold
\begin{equation}\label{linftyH1-velocity}
\|\etilde \| _{l^{\infty} (H^1)}\, + \,
\|e_p^{m+1}\|_{l^{\infty} (L ^2 )} \le C\, k.
\end{equation}

\end{theorem}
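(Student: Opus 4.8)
The plan is to deduce the two estimates in the stated order: first the $l^{\infty}({\bf H}^1)$ bound for the velocity error $\widetilde{\bf e}^{m+1}$, and then the $l^{\infty}(L^2)$ bound for the pressure error, the latter relying on the former. Both Theorems~\ref{v1} and~\ref{lv12} do the heavy lifting, so this step is essentially a post-processing of the estimates already obtained.

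\textbf{Step 1: $\|\widetilde{\bf e}^{m+1}\|_{l^{\infty}({\bf H}^1)}\le C\,k$.} The idea is a discrete fundamental theorem of calculus, starting from the estimate $\|\delta_t\widetilde{\bf e}^{m+1}\|_{l^2({\bf H}^1)}\le C\,k$ of Theorem~\ref{lv12}. The initialization ${\bf u}^0=\widetilde{\bf u}^0={\bf u}(0)$ gives $\widetilde{\bf e}^0={\bf 0}$, hence for every $r\ge 0$
$$\widetilde{\bf e}^{r+1}=\widetilde{\bf e}^0+k\sum_{m=1}^{r+1}\delta_t\widetilde{\bf e}^{m}=k\sum_{m=1}^{r+1}\delta_t\widetilde{\bf e}^{m}.$$
Taking the norm $\|\cdot\|$, the triangle inequality and the discrete Cauchy--Schwarz inequality give
$$\|\widetilde{\bf e}^{r+1}\|\le k\sum_{m=1}^{r+1}\|\delta_t\widetilde{\bf e}^{m}\|\le\big((r+1)\,k\big)^{1/2}\Big(k\sum_{m=1}^{r+1}\|\delta_t\widetilde{\bf e}^{m}\|^2\Big)^{1/2}\le T^{1/2}\,\|\delta_t\widetilde{\bf e}^{m+1}\|_{l^2({\bf H}^1)}\le C\,k,$$
where the first-step term $m=1$ is controlled by the hypothesis $\|\delta_t\widetilde{\bf e}^1\|\le C\sqrt{k}$ of Theorem~\ref{lv12}. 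This yields $\|\widetilde{\bf e}^{m+1}\|_{l^{\infty}({\bf H}^1)}\le C\,k$, and by Lemma~\ref{regularidad-errores} also $\|{\bf e}^{m+1}\|_{l^{\infty}({\bf H}^1)}\le C\,k$.

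\textbf{Step 2: $\|e_p^{m+1}\|_{l^{\infty}(L^2)}\le C\,k$.} Here I would use the momentum error equation $(E_3)^{m+1}$ together with the continuous inf-sup condition on $\Omega$ (valid since $\Omega$ is regular by ${\bf (H0)}$): there exists $\beta>0$ with $\beta\,\|q\|_{L^2}\le\sup\{(q,\nabla\cdot{\bf v})/\|{\bf v}\|:\ {\bf v}\in{\bf H}_0^1(\Omega)\setminus\{{\bf 0}\}\}$ for every zero-mean $q\in L^2(\Omega)$. Testing $(E_3)^{m+1}$ with an arbitrary ${\bf v}\in{\bf H}_0^1(\Omega)$ (not divergence free) and using $(\nabla e_p^{m+1},{\bf v})=-(e_p^{m+1},\nabla\cdot{\bf v})$ one gets
$$\big|(e_p^{m+1},\nabla\cdot{\bf v})\big|\le C\Big(|\delta_t{\bf e}^{m+1}|+\|\widetilde{\bf e}^{m+1}\|+\|\mathcal{E}^{m+1}\|_{{\bf H}^{-1}}+\|{\bf NL}^{m+1}\|_{{\bf H}^{-1}}\Big)\,\|{\bf v}\|,$$
and, since $e_p^{m+1}=p(t_{m+1})-p^{m+1}$ has zero mean, the inf-sup condition bounds $\|e_p^{m+1}\|_{L^2}$ by the same right-hand side. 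Each term is then $O(k)$: $|\delta_t{\bf e}^{m+1}|\le C\,k$ by Theorem~\ref{lv12}; $\|\widetilde{\bf e}^{m+1}\|\le C\,k$ by Step~1; for the consistency error, $\|\mathcal{E}_1^{m+1}\|_{{\bf H}^{-1}}\le\frac1k\int_{t_m}^{t_{m+1}}(t-t_m)\,\|{\bf u}_{tt}(t)\|_{{\bf H}^{-1}}\,dt\le\frac{k}{2}\,\|{\bf u}_{tt}\|_{L^{\infty}({\bf H}^{-1})}\le C\,k$, which is where ${\bf (H3)}$ is used, and $\|\mathcal{E}_2^{m+1}\|_{{\bf H}^{-1}}\le C\,\big|\int_{t_m}^{t_{m+1}}{\bf u}_t\big|\,\|\nabla{\bf u}(t_{m+1})\|_{L^3}\le C\,k$ by ${\bf (H1)}$--${\bf (H2)}$; finally, writing $({\bf NL}^{m+1},{\bf v})=-c(\widetilde{\bf e}^m,{\bf u}(t_{m+1}),{\bf v})-c(\widetilde{\bf u}^m,\widetilde{\bf e}^{m+1},{\bf v})$ and using the two forms of the trilinear estimate, $|c(\widetilde{\bf e}^m,{\bf u}(t_{m+1}),{\bf v})|\le C\,|\widetilde{\bf e}^m|\,\|{\bf u}(t_{m+1})\|_{W^{1,3}\cap L^{\infty}}\|{\bf v}\|\le C\,k\,\|{\bf v}\|$ (by ${\bf (H1)}$ and $|\widetilde{\bf e}^m|\le C\,k$) and $|c(\widetilde{\bf u}^m,\widetilde{\bf e}^{m+1},{\bf v})|\le C\,\|\widetilde{\bf u}^m\|_{{\bf H}^1}\|\widetilde{\bf e}^{m+1}\|\,\|{\bf v}\|\le C\,k\,\|{\bf v}\|$ (using $\|\widetilde{\bf u}^m\|_{{\bf H}^1}\le C$ and Step~1). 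Collecting these bounds gives $\|e_p^{m+1}\|_{l^{\infty}(L^2)}\le C\,k$.

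\textbf{Where the difficulty lies.} The one genuinely delicate point is obtaining the $l^{\infty}$-in-time control of the pressure: estimating the truncation term $\mathcal{E}_1^{m+1}$ in ${\bf H}^{-1}$ with only ${\bf u}_{tt}\in L^2({\bf H}^{-1})$ would cost a factor $k^{1/2}$, which is precisely why the extra regularity ${\bf (H3)}$ is imposed. A secondary subtlety is choosing the expression of $c$ so that $\widetilde{\bf e}^m$ enters $\|{\bf NL}^{m+1}\|_{{\bf H}^{-1}}$ through its $L^2$-norm (possible thanks to ${\bf u}(t_{m+1})\in W^{1,3}\cap L^{\infty}$ from ${\bf (H1)}$), and the fact that the remaining convective term is controlled only once Step~1 is in hand — so the order of the two steps is essential.
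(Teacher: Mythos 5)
Your proof is correct, but your Step 1 takes a genuinely different --- and shorter --- route than the paper. The paper first derives an intermediate estimate $\|e_p^{m+1}\|_{l^2(L^2)}\le C\,k$ via the inf-sup condition applied to $(E_3)^{m+1}$, and only then obtains the $l^\infty({\bf H}^1)$ velocity bound from an energy identity: it tests $-\Delta \etilde=-\delta_t{\bf e}^{m+1}-\nabla e_p^{m+1}+\mathcal{E}^{m+1}+{\bf NL}^{m+1}$ against $2k\,\delta_t\etilde$, sums in $m$, and absorbs the pressure term using the $l^2(L^2)$ bound just obtained. You instead telescope $\widetilde{\bf e}^{r+1}=k\sum_{m=1}^{r+1}\delta_t\widetilde{\bf e}^{m}$ (using $\widetilde{\bf e}^0={\bf 0}$ from the initialization) and apply discrete Cauchy--Schwarz together with $\|\delta_t\etilde\|_{l^2({\bf H}^1)}\le C\,k$ from Theorem~\ref{lv12}, the first-step term being covered by the hypothesis $\|\delta_t\widetilde{\bf e}^1\|\le C\sqrt{k}$. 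This yields the $l^\infty({\bf H}^1)$ bound directly, with no pressure information needed, and makes the paper's intermediate $l^2(L^2)$ pressure estimate superfluous for the stated conclusion; the only cost is a harmless factor $T^{1/2}$. Your Step 2 is essentially the paper's final step: the continuous inf-sup condition applied to $(E_3)^{m+1}$, with $({\bf H3})$ entering only through the ${\bf H}^{-1}$-bound of $\mathcal{E}_1^{m+1}$, and the convective terms controlled via $|\widetilde{\bf e}^m|\le C\,k$ and $\|\etilde\|\le C\,k$ (the paper uses $\|\widetilde{\bf e}^m\|\le C\,k$ for the first convective term, but your $L^2$-based bound is equally valid given ${\bf u}(t_{m+1})\in {\bf W}^{1,3}\cap {\bf L}^\infty$). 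Your observation that the velocity estimate must precede the pressure estimate is exactly the structure of the paper's argument as well.
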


\begin{proof} 

\noindent{\bf Step 1.} To prove 
\begin{equation}\label{l2l2-pressure}
\|e_p^{m+1}\|_{l^{2} (L ^2 )} \le C\, k.
\end{equation}

We are going to deduce the estimate (\ref{l2l2-pressure}) from Theorem \ref{lv12}  and the continuous
inf-sup condition applied to  $ (E_3 ) ^{m+1}$. Indeed, rewritten
$(E_3)^{m+1}$ as
$$-\nabla e_{p} ^{m+1} =\delta_t  {\bf e}^{m+1} - \Delta
\etilde  -\mathcal{E}^{m+1} -{\bf NL}^{m+1}, \quad \ e_p ^{m+1} \in
L_0^2 (\Omega), $$ then, applying the continuous inf-sup condition
\begin{equation}\label{est-l2-error presion}
\begin{array}{l}
\|e_{p} ^{m+1} \|  _{L^2}
 \le  C  \Big \{ \| \delta_t  {\bf e} ^{m+1}  \|
_{H^{-1}} + \| \etilde \|+
 \|\mathcal{E}^{m+1} \|_{H^{-1}}  +  \|{\bf NL}^{m+1} \|_{H^{-1}}
 \Big\}
\\
 \qquad \le  C  \Big \{ \| \delta_t  {\bf e} ^{m+1}  \|
_{H^{-1}} + \| \etilde \|+  \| \widetilde{\bf e}^m  \|
+  k \| {\bf u}_{tt} \|_{L^\infty(0,T;H^{-1})} 
+  k \| {\bf u}_{t} \|_{L^\infty(0,T;L^{3})} \Big \},
\end{array}
\end{equation}
where we have used the estimate
$$\| {\bf NL}^{m+1}\|_{H^{-1}} 
\le C ( \| \widetilde{\bf e}^m \|\, \|{\bf u}(t_{m+1})\|_{L^3} + \| \etilde \| \,\|\widetilde {\bf u}^{m}\|_{L^3} )
 \le C ( \| \widetilde{\bf e}^m \|  + \| \etilde \| ).$$
By taking into account that  $\| \etilde \|_{l^{2}(H^1)} \le C\,k$ and $ \| \delta_t \ee \|
_{l^{\infty} (L^2)} \le C\, k$ and hypothesis {\bf (H2)} and {\bf (H3)}, we arrive at \eqref{l2l2-pressure}.
\medskip

\noindent{\bf Step 2.} 
To prove (\ref{linftyH1-velocity}) for $\etilde$.  

From $(E_3)^{m+1}$ we have
$$- \Delta \etilde = -\delta_t {\bf e}^{m+1} -\nabla  e_{p} ^{m+1} +
\mathcal{E}^{m+1} + {\bf NL}^{m+1}, \qquad \etilde
  |_{\partial \Omega } =0.$$

  Multiplying  by  $2\,k\,  \delta _t \etilde $,
  we obtain
\begin{equation}\label{step-2}
\begin{array}{l}
  |\nabla \etilde |^2 - | \nabla \widetilde{\bf  e}^{m}|^2 +  |\nabla \etilde - \nabla \widetilde{\bf  e}^{m}|^2
\\
=
2\, k \Big(-\nabla e_{p} ^{m+1} -  \delta_t {\bf e}^{m+1} + \mathcal{E}^{m+1} + {\bf NL}^{m+1},  \delta _t \etilde       \Big)
\\
 \le C\,k \, |e_{p} ^{m+1} |^2
 +C\,k \,  |\nabla \delta_t \etilde|^2 +C\,k\, |\delta_t {\bf
e} ^{m+1} |^2 +
 C\, k\, \|\mathcal{E}^{m+1} \|_{H^{-1}}^2  +C\, k\,  \|{\bf NL}^{m+1} \|_{H^{-1}}^2
 \\
  \le C\,k \, |e_{p} ^{m+1} |^2
 +C\,k \,  |\nabla \delta_t \etilde|^2 +C\,k\, |\delta_t {\bf
e} ^{m+1} |^2 +
 C\, k^3  +C\, k\, ( \| \widetilde{\bf e}^m \| ^2 + \| \etilde \|^2 )
 \end{array}
\end{equation}
where we have bounded the two last terms at RHS of (\ref{step-2}) as in (\ref{est-l2-error presion}). Adding (\ref{step-2}) from $m=0$ to $r$  and  applying the estimates of Theorems \ref{v1} and \ref{lv12} and (\ref{l2l2-pressure}), we arrive at (\ref{linftyH1-velocity}) for $\etilde$.
\medskip

\noindent{\bf Step 3.} To prove  (\ref{linftyH1-velocity}) for $e_p^{m+1}$.

By using  the inequality
(\ref{est-l2-error presion}) and taking into account that
$$ \|
\delta_t {\bf e} ^{m+1}  \| _{H^{-1}} \le C\,  | \delta _t {\bf e}
^{m+1}| \le C\, k \q\hbox{and}\q \| \etilde \| \le C\, k,
$$ we arrive at  (\ref{linftyH1-velocity}) for $e_p^{m+1}$.
 \end{proof}

\subsection{Adittional estimates}




Now, we are going to obtain some $H^2$ stability estimates which will be necessary in next Section  to get  optimal error estimates in space.


%
\begin{lemma}\label{reg-h2}
Under hypotheses of Theorem \ref{v1} and ${\bf (H0)}$, one has
$$ \| \etilde \|_{{\bf H}^2} \le C ,\qquad \forall m .$$
\end{lemma}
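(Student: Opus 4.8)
The plan is to obtain the $\mathbf{H}^2$-bound for $\widetilde{\mathbf{e}}^{m+1}$ by applying elliptic ($\mathbf{H}^2$) regularity to the momentum-type relation $(E_3)^{m+1}$ (equivalently, to the equation $-\Delta\widetilde{\mathbf{e}}^{m+1}=-\delta_t\mathbf{e}^{m+1}-\nabla e_p^{m+1}+\mathcal{E}^{m+1}+\mathbf{NL}^{m+1}$ with $\widetilde{\mathbf{e}}^{m+1}|_{\partial\Omega}=\mathbf{0}$ used in Step~2 of Theorem~\ref{cor1}). Under hypothesis $\mathbf{(H0)}$, the $\mathbf{H}^2$-regularity of the Poisson problem gives
$$
\|\widetilde{\mathbf{e}}^{m+1}\|_{\mathbf{H}^2}\le C\Big\{|\delta_t\mathbf{e}^{m+1}|+\|\nabla e_p^{m+1}\|+\|\mathcal{E}^{m+1}\|+\|\mathbf{NL}^{m+1}\|\Big\},
$$
so it suffices to bound each of these four terms uniformly in $m$.

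First I would handle $\delta_t\mathbf{e}^{m+1}$: by Theorem~\ref{lv12} we have $\|\delta_t\mathbf{e}^{m+1}\|_{l^\infty(\mathbf{L}^2)}\le C\,k$, hence $|\delta_t\mathbf{e}^{m+1}|\le C$. For the pressure gradient, Theorem~\ref{cor1} gives $\|e_p^{m+1}\|_{l^\infty(L^2)}\le C\,k$, but I actually need $\|\nabla e_p^{m+1}\|$ in $L^2$; here I would instead invoke $\|\nabla e_p^{m+1}\|\le C$, which follows because $|k\,\nabla e_p^{m+1}|$ is controlled together with $k\,|\nabla p(t_{m+1})|$ (the exact pressure is regular by $\mathbf{(H1)}$, giving $p\in L^\infty(H^1)$) — more directly, one can bound $|\nabla e_p^{m+1}|\le |\nabla p(t_{m+1})|+|\nabla p^{m+1}|$, the first term bounded by $\mathbf{(H1)}$ and the second via the stability estimate $|k\,\nabla p^{m+1}|\le C$ of Lemma~\ref{vn} divided by $k$... this is the delicate point. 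The cleaner route is: from $(E_3)^{m+1}$, the pressure error satisfies a Poisson-Neumann problem $\Delta e_p^{m+1}=\nabla\cdot(-\delta_t\mathbf{e}^{m+1}+\mathcal{E}^{m+1}+\mathbf{NL}^{m+1})$ (using $\nabla\cdot\widetilde{\mathbf{e}}^{m+1}$ is not zero, so care is needed with the $\Delta\widetilde{\mathbf{e}}^{m+1}$ term); alternatively, use $\|\nabla e_p^{m+1}\|_{H^{-1}}=\|e_p^{m+1}\|_{L^2}\le C\,k$ is not enough. I expect the intended argument is simply that $\|e_p^{m+1}\|_{l^\infty(H^1)}\le C$ has already been proved inside Theorem~\ref{v1} (the estimate $\|e_p^{m+1}\|_{l^\infty(H^1)}\le C$ appears there), so $\|\nabla e_p^{m+1}\|\le C$ directly.

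For the consistency and nonlinear terms: $\|\mathcal{E}^{m+1}\|\le\|\mathcal{E}_1^{m+1}\|+\|\mathcal{E}_2^{m+1}\|$, and here I would use the stronger regularity available, bounding $\mathcal{E}_1^{m+1}$ by $\frac1k\int_{t_m}^{t_{m+1}}(t-t_m)|\mathbf{u}_{tt}(t)|\,dt$ — but this needs $\mathbf{u}_{tt}\in L^2(\mathbf{L}^2)$, which is $\mathbf{(H2)}$, already assumed via the hypotheses of Theorem~\ref{v1}? No — Theorem~\ref{v1} assumes only $\mathbf{(H1)}$. So I would bound $\|\mathcal{E}_1^{m+1}\|_{\mathbf{H}^{-1}}$ and note this gives only an $\mathbf{H}^{-1}$ estimate; for the $\mathbf{H}^2$-elliptic estimate I genuinely need the $\mathbf{L}^2$-norm of the right-hand side, which forces $\mathbf{u}_{tt}\in L^2(\mathbf{L}^2)$, i.e. $\mathbf{(H2)}$. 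The main obstacle, and the thing to get right, is precisely which regularity hypothesis this lemma rests on: I expect the proof to invoke $\mathbf{(H1)}$ plus the already-established error estimates, bounding the right-hand side of the elliptic problem in $\mathbf{H}^{-1}$ is insufficient, so the correct statement uses that $\widetilde{\mathbf{e}}^{m+1}=\mathbf{u}(t_{m+1})-\widetilde{\mathbf{u}}^{m+1}$ and it is enough to show $\|\widetilde{\mathbf{u}}^{m+1}\|_{\mathbf{H}^2}\le C$ via the Stokes-type regularity applied to Sub-step~1, using $\|\widetilde{\mathbf{u}}^{m+1}\|_{\mathbf{H}^1}\le C$ (from the remark after Theorem~\ref{v1}), $\|\mathbf{u}^m\|_{\mathbf{H}^1}\le C$, $|k\nabla p^m|\le C$ hence $\|k\nabla p^m\|$... — concluding, the plan is: apply $\mathbf{H}^2$-regularity to $(S_3)^{m+1}$ (or $(S_1)^{m+1}$) for $\widetilde{\mathbf{u}}^{m+1}$, bound the convective term $C(\widetilde{\mathbf{u}}^m,\widetilde{\mathbf{u}}^{m+1})$ in $\mathbf{L}^2$ using $\|\widetilde{\mathbf{u}}^m\|_{\mathbf{H}^1}\le C$ and $\|\widetilde{\mathbf{u}}^{m+1}\|_{\mathbf{W}^{1,3}\cap\mathbf{L}^\infty}$ — but this last norm is not yet controlled, so one must first bootstrap. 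Given the structure, I believe the honest proof writes $-\Delta\widetilde{\mathbf{u}}^{m+1}=\mathbf{f}^{m+1}-\nabla p^m-\frac1k(\widetilde{\mathbf{u}}^{m+1}-\mathbf{u}^m)-C(\widetilde{\mathbf{u}}^m,\widetilde{\mathbf{u}}^{m+1})$ and uses that the right side is bounded in $\mathbf{L}^2$ *by the already-proven first-order estimates* (since $\frac1k(\widetilde{\mathbf{u}}^{m+1}-\mathbf{u}^m)$ relates to $\delta_t\widetilde{\mathbf{u}}^{m+1}$ which, by Theorem~\ref{lv12} combined with $\mathbf{u}_t\in L^\infty(\mathbf{L}^2)$ from $\mathbf{(H2)}$, is bounded in $\mathbf{L}^2$), giving $\|\widetilde{\mathbf{u}}^{m+1}\|_{\mathbf{H}^2}\le C$ and therefore $\|\widetilde{\mathbf{e}}^{m+1}\|_{\mathbf{H}^2}\le\|\mathbf{u}(t_{m+1})\|_{\mathbf{H}^2}+\|\widetilde{\mathbf{u}}^{m+1}\|_{\mathbf{H}^2}\le C$ using $\mathbf{(H1)}$.
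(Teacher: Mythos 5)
Your overall strategy --- apply $\mathbf{H}^2$-elliptic regularity to one of the error (or scheme) equations and bound its right-hand side in $\mathbf{L}^2$ --- is the same as the paper's, but the paper works with $(E_1)^{m+1}$ rather than $(E_3)^{m+1}$, and this is not cosmetic: the time-difference term in $(E_1)^{m+1}$ is $\frac1k(\widetilde{\bf e}^{m+1}-{\bf e}^m)$, which is bounded in $\mathbf{L}^2$ directly from $|\widetilde{\bf e}^{m+1}-{\bf e}^m|\le C\,k$, a consequence of (\ref{est-diferencia-errores}), i.e.\ under the hypotheses actually stated in the lemma (those of Theorem~\ref{v1} plus $({\bf H0})$). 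Your route through $\delta_t{\bf e}^{m+1}$ instead invokes Theorem~\ref{lv12}, hence $({\bf H2})$ and the first-step constraints, which are not among the lemma's hypotheses. The pressure term is handled as you eventually guess, via $\|e_p^{m}\|_{l^\infty(H^1)}\le C$ from Theorem~\ref{v1}.

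The genuine gap is the convective term, and you name the obstruction yourself (``$\|\widetilde{\bf u}^{m+1}\|_{{\bf W}^{1,3}\cap {\bf L}^\infty}$ is not yet controlled, so one must first bootstrap'') but never perform the bootstrap: your concluding sentence simply asserts that the right-hand side is bounded in $\mathbf{L}^2$ by the already-proven first-order estimates, which is precisely what fails for $C(\widetilde{\bf u}^m,\widetilde{\bf u}^{m+1})$, equivalently for ${\bf NL}^{m+1}$. The paper closes this loop as follows: it bounds $|{\bf NL}^{m+1}|\le C\big(\|\widetilde{\bf e}^m\|_{{\bf L}^\infty\cap{\bf W}^{1,3}}\|{\bf u}(t_{m+1})\|+\|\widetilde{\bf u}^m\|\,\|\widetilde{\bf e}^{m+1}\|_{{\bf L}^\infty\cap{\bf W}^{1,3}}\big)$, uses the interpolation inequality $\|v\|^2_{{\bf L}^\infty\cap{\bf W}^{1,3}}\le C\,\|v\|\,\|v\|_{{\bf H}^2}$ together with the uniform $\mathbf{H}^1$ bounds on $\widetilde{\bf e}^m$ and $\widetilde{\bf u}^m$ already available after Theorem~\ref{v1}, and Young's inequality, to obtain $|{\bf NL}^{m+1}|^2\le\varepsilon\big(\|\widetilde{\bf e}^m\|_{{\bf H}^2}^2+\|\widetilde{\bf e}^{m+1}\|_{{\bf H}^2}^2\big)+C$. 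Absorbing $\varepsilon\|\widetilde{\bf e}^{m+1}\|_{{\bf H}^2}^2$ into the left-hand side yields the recursion $\|\widetilde{\bf e}^{m+1}\|_{{\bf H}^2}^2\le\alpha\,\|\widetilde{\bf e}^m\|_{{\bf H}^2}^2+C$ with $\alpha<1$, and an induction in $m$ (summing the geometric series $\alpha^m+\cdots+1$) gives the uniform bound. Without this absorption-plus-induction step the argument does not close, so as written the proposal is incomplete at its central point.
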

\begin{proof} From the $H^2$-regularity of the Poisson problem
$(E_1)^{m+1}$, one has
\begin{equation}\label{unif-estim-veloc}
\|\etilde \|_{{\bf H}^2}^2 \le C\left( \Big |\displaystyle \frac{\etilde -{\bf
    e}^m }{k} \Big | ^2 + | \nabla e_p ^m|^2 + k^2 | \nabla \, \delta_t
p(t_{m+1})| ^2+ | \mathcal{E}^{m+1} | ^2 + |{\bf NL}^{m+1}|^2
\right).
\end{equation}
The first and second term of the RHS of (\ref{unif-estim-veloc}) are bounded  using that $|
\etilde -{\bf e}^m | \le C\, k $ from
(\ref{est-diferencia-errores}) and $ \| e_{p}^{m+1}
\|_{l^{\infty}(H^1)} \le C$ from (\ref{est-errores-velocidad}). It
is easy to bound the third and the forth term of the RHS of (\ref{unif-estim-veloc}).
Finally, we bound the nonlinear term as follows
$$
\begin{array}{rcl}
 |{\bf NL}^{m+1}|^2 
&\le& C \Big( \| \widetilde{\bf e}^m \|^2_{{\bf L}^{\infty} \cap {\bf W}^{1,3}}  \| {\bf u}(t_{m+1}) \|^2
+  \| \widetilde{\bf u}^m \|^2 \| \etilde \|^2_{{\bf L}^{\infty} \cap {\bf W}^{1,3}} \Big)
\\ 
 &\le& C\Big(\| \widetilde{\bf e}^{m} \|\,   \| \widetilde{\bf e}^{m} \|_{{\bf H}^2} + \| \widetilde{\bf e}^{m+1} \|\,   \| \etilde \|_{{\bf H}^2}\Big) \le \varepsilon \Big(\| \widetilde{\bf e}^{m} \|_{{\bf H}^2}^2 + \| \etilde \|_{{\bf H}^2}^2  +C\Big) .
 \end{array}
$$
 Then, by applying these estimates in (\ref{unif-estim-veloc}) and taking a small enough $\varepsilon$,  there exists $\alpha<1$ such that
  $$
\|\etilde \|_{{\bf H}^2}^2 \le \alpha \| \widetilde{\bf e}^{m} \|_{{\bf H}^2}^2 + C,
 $$
 hence, by an induction process,
 $$
  \|\etilde\|_{{\bf H}^2}^2 \le \alpha^{m+1}  \| \widetilde{\bf e}^{0} \|_{{\bf H}^2}^2+ C\, (\alpha^m+\cdots +\alpha + 1) \le C
 $$
  and the proof is concluded.
  \end{proof}

\begin{remark}\label{reg-fuerte} As a consequence of  the $l^\infty$ in time estimates $\|\widetilde{\bf e}^{m+1}\|_{{\bf H}^2}\le C$ and $\| e_p^{m+1} \| \le C, \ \forall m $,
one also   has
$$ \| \utilde \| _{{\bf H}^2} \le C \  \mbox{ and } \ \ \| p^{m+1} \| \le C \quad \forall m.
$$
\end{remark}

On the other hand, as a direct consequence of Theorem \ref{lv12}, one has
$$\| \delta_t \delta_t \ee \|_{l^{\infty}({\bf L}^2)}+
\| \delta_t \delta_t \etilde \|_{l^{\infty}({\bf L}^2)}\le C. $$
In particular, using that ${\bf u}_{tt} \in  L^{2}({\bf L}^2)$ (see  {\bf (H2)}), this estimate can be extended to the scheme as
\begin{equation}\label{reg-delta-delta}
\| \delta_t \delta_t \utilde \|_{l^{2}({\bf L}^2)}\le C .
\end{equation}


\begin{lemma}\label{reg-delta-h2}
Under hypotheses of Theorem \ref{lv12} and ${\bf (H0)}$, one has
 $$\| \delta_t \etilde \|_{l^2({\bf H}^2)} \le C. $$ 
In particular $\|
 \delta_t \utilde \|_{l^2({\bf H}^2)} \le C.$
\end{lemma}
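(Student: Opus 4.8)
The strategy mirrors the proof of Lemma~\ref{reg-h2}, but now applied to the time-differenced Sub-step~1 problem $(D_1)^{m+1}$ instead of $(E_1)^{m+1}$, and the estimate is obtained in $l^2({\bf H}^2)$ rather than $l^\infty({\bf H}^2)$. First I would apply the $H^2$-regularity hypothesis ${\bf (H0)}$ to $(D_1)^{m+1}$ viewed as a Poisson problem for $\delta_t\etilde$ with homogeneous Dirichlet boundary condition, obtaining
$$
\|\delta_t\etilde\|_{{\bf H}^2}^2 \le C\Big( |\delta_t\delta_t\etilde - \delta_t\delta_t{\bf e}^m|^2\cdot 0 + \big|\tfrac{\delta_t\etilde-\delta_t{\bf e}^m}{k}\big|^2 + |\nabla\delta_t e_p^m|^2 + k^2|\nabla\delta_t\delta_t p(t_{m+1})|^2 + |\delta_t\mathcal{E}^{m+1}|^2 + |\delta_t{\bf NL}^{m+1}|^2\Big).
$$
(The first term above is spurious; the genuine forcing terms are the five that follow.) Then I would sum over $m$, with the weight $k$, and bound each contribution on the right-hand side using the estimates already proved.

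The bounds proceed term by term. The term $k\sum_m\big|\tfrac{\delta_t\etilde-\delta_t{\bf e}^m}{k}\big|^2 = \tfrac1k\sum_m|\delta_t\etilde-\delta_t{\bf e}^m|^2$ is controlled because Theorem~\ref{lv12} gives $\sum_m|\delta_t\etilde-\delta_t{\bf e}^m|^2 \le C k^2$, so this sum is $\le Ck$. The term $k\sum_m|\nabla\delta_t e_p^m|^2$ is bounded since $\|\delta_t e_p^{m+1}\|_{l^\infty(H^1)}\le C$ by Theorem~\ref{lv12} (actually one gets $\le CT$ after summing, independent of $k$). The term $k\sum_m k^2|\nabla\delta_t\delta_t p(t_{m+1})|^2$ is bounded using $p_{tt}\in L^2(H^1)$ from ${\bf (H2)}$ via a standard Taylor/integral-remainder estimate $k^2|\nabla\delta_t\delta_t p(t_{m+1})|^2 \le C\int_{t_{m-1}}^{t_{m+1}}|\nabla p_{tt}|^2$. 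For the consistency error $\delta_t\mathcal{E}^{m+1}$ I would reuse exactly the decomposition and bounds from the proof of Theorem~\ref{lv12} (the terms involving ${\bf u}_{ttt}\in L^2({\bf H}^{-1})$, ${\bf u}_{tt}\in L^2({\bf L}^2)$, ${\bf u}_t\in L^2({\bf H}^1)$ from ${\bf (H2)}$), giving $k\sum_m|\delta_t\mathcal{E}^{m+1}|^2\le C k^2$ (or at least $\le C$).

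The main obstacle is the nonlinear term $k\sum_m|\delta_t{\bf NL}^{m+1}|^2$. Expanding $\delta_t{\bf NL}^{m+1}$ as in Theorem~\ref{lv12} produces four pieces built from $C(\delta_t\widetilde{\bf e}^m,{\bf u}(t_{m+1}))$, $C(\delta_t\widetilde{\bf u}^m,\etilde)$, $C(\widetilde{\bf e}^{m-1},\delta_t{\bf u}(t_{m+1}))$, $C(\widetilde{\bf u}^{m-1},\delta_t\etilde)$. Each must be estimated in ${\bf L}^2$, using the embeddings ${\bf H}^2\hookrightarrow {\bf L}^\infty\cap {\bf W}^{1,3}$ together with the already-established uniform bounds $\|\etilde\|_{{\bf H}^2}\le C$ and $\|\utilde\|_{{\bf H}^2}\le C$ (Lemma~\ref{reg-h2} and Remark~\ref{reg-fuerte}), $\|\widetilde{\bf e}^m\|\le Ck$, $\|\delta_t\etilde\|\le Ck$ in ${\bf H}^1$ and $\|\delta_t\etilde\|_{{\bf L}^2}\le Ck$ (Theorem~\ref{lv12}), and crucially the regularity ${\bf u}_t\in L^\infty({\bf L}^3)\cap L^2({\bf H}^1)$ from ${\bf (H2)}$. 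The delicate point is the piece $C(\widetilde{\bf u}^{m-1},\delta_t\etilde)$: one writes $|C(\widetilde{\bf u}^{m-1},\delta_t\etilde)|\le C\|\widetilde{\bf u}^{m-1}\|_{{\bf L}^\infty}\|\delta_t\etilde\|_{{\bf H}^1}\le C\|\delta_t\etilde\|$ — no, this is too lossy; instead use $|C(\widetilde{\bf u}^{m-1},\delta_t\etilde)|\le C\|\widetilde{\bf u}^{m-1}\|_{{\bf H}^2}|\nabla\delta_t\etilde|$ and absorb $\varepsilon\|\delta_t\etilde\|_{{\bf H}^2}^2$ into the left-hand side after noting $|\nabla\delta_t\etilde|$ is itself controlled by $\|\delta_t\etilde\|_{{\bf H}^2}$ with a small constant, or simply bound it by $C k$ via Theorem~\ref{lv12}. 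After all four pieces are handled, $k\sum_m|\delta_t{\bf NL}^{m+1}|^2\le C$, and collecting everything gives $\|\delta_t\etilde\|_{l^2({\bf H}^2)}\le C$. The final claim $\|\delta_t\utilde\|_{l^2({\bf H}^2)}\le C$ then follows immediately from $\delta_t\utilde = {\bf u}_t$-type identity minus $\delta_t\etilde$, i.e. from $\delta_t\widetilde{\bf e}^m = \delta_t{\bf u}(t_m) - \delta_t\utilde$ together with ${\bf u}\in L^\infty({\bf H}^2)$ (hence $\delta_t{\bf u}(t_m)$ bounded in ${\bf H}^2$ after a standard argument) — actually more directly from $\delta_t\utilde = \delta_t({\bf u}(t_{m+1})) - \delta_t\etilde$ and the bound just obtained.
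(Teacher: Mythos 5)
Your proposal follows exactly the strategy of the paper's own (one-sentence) proof: apply the $H^2$-elliptic regularity guaranteed by ${\bf (H0)}$ to the Poisson problem $(D_1)^{m+1}$ for $\delta_t\widetilde{\bf e}^{m+1}$, argue as in Lemma~\ref{reg-h2}, and control the right-hand side in $l^2({\bf L}^2)$ via Theorem~\ref{lv12}, ${\bf (H2)}$ and the uniform ${\bf H}^2$-bounds of Lemma~\ref{reg-h2} and Remark~\ref{reg-fuerte}; your term-by-term bounds are essentially correct. The only caveat is that the consistency term $\delta_t\mathcal{E}^{m+1}$ must be re-estimated genuinely in ${\bf L}^2$ (using ${\bf u}_{tt}\in L^{2}({\bf L}^{2})$ from ${\bf (H2)}$) rather than by literally reusing the ${\bf H}^{-1}$-duality bounds of Theorem~\ref{lv12}, but this is a minor adjustment and the required regularity is available.
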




\begin{proof} The idea is to argue as in Lemma \ref{reg-h2}, using  the $H^2$-regularity of the Poisson problem
$(D_1)^{m+1}$ and applying Theorem \ref{lv12}.
\end{proof}

\section{ Fully discrete scheme (Algorithm~\ref{algo-fully})}
In this section,  we will denote by $C$ different constants, always
independent of $k$ and $h$.
\subsection{Finite element approximation and fully discrete scheme}

We  consider a segregated FE approximation of the time discrete
Algorithm~\ref{alg-time}. We restrict ourselves to the case where
$\Omega$ is a $2D$ polygon or a $3D$ polyhedron satisfying the
regularity hypothesis  $({\bf H0})$. We consider two FE spaces
 ${\bf Y}_h \subset {\bf H}_0^1
(\Omega)$ and $Q_h \subset H^1(\Omega)\cap L_0^2 (\Omega)$
associated to a regular family of triangulations ${\cal T}_h$ of
the domain $\Omega$ of mesh size $h$ (regular in the Ciarlet's sense 
\cite{ciarlet}). For simplicity, we restrict ${\bf Y}_h$ and $Q_h$ to
globally continuous functions and locally polynomials of degree at
least $1$. Finally, we will assume:

\begin{enumerate}
\item The inverse inequality $ \| {\bf u}_h \| \le C\,h^{-1}  |{\bf u}_h   | $ for each ${\bf u}_h \in {\bf Y}_h$ holds.
\item The stable ``{\it inf-sup}''
condition (\cite{gr}) for $({\bf Y}_h,Q_h)$:
There exists $ \beta >0 $ independent of $ h$ such that,
\begin{equation} \label{cinfsup}
\inf_{q_h \in Q_h \setminus\{0\}} \left( \sup_{{\bf v}_h\in {\bf Y}_h \setminus\{0\}}
\frac{(q_h, \nabla \cdot {\bf v}_h)}{\|{\bf v}_h \| \, |q_h
|}\right) \ge \beta .
\end{equation}
\item There exists  some  interpolation operators with the following properties:
\begin{enumerate}
    \item $I_h : {\bf L}^2 \rightarrow {\bf Y}_h  $ such as
\begin{equation}\label{orthog-interp-u}
    ({\bf u}-I_h {\bf u}, \nabla q_h)=0 , \quad \forall \, q_h \in Q_h
\end{equation}
 satisfying the  approximation properties:
 \begin{equation} \label{H-1approx}
 \|  {\bf u}- I_h   {\bf u} \|_{H^{-1}}  \le C\, h\,
\| {\bf u}\|_{L^2} \quad \forall \,{\bf u} \in
{\bf L}^2 (\Omega),
\end{equation}
  $$  \|  {\bf u}- I_h   {\bf u} \|_{L^2}  \le C\, h\,
\| {\bf u}\|_{H^1} \quad \forall \,{\bf u} \in
{\bf H}_0^1 (\Omega),
$$
 $$  \|  {\bf u}- I_h   {\bf u} \|_{H^1}  \le C\, h\,
\| {\bf u}\|_{H^2} \quad \forall \,{\bf u} \in
{\bf H}^2 (\Omega)\cap {\bf H}_0^1(\Omega),
$$
and the  stability property:
$$  \|  I_h   {\bf u} \|_{H^1}  \le C\, 
\| {\bf u}\|_{H^1} \quad \forall \,{\bf u} \in
{\bf H}_0^1 (\Omega).
$$
    \item $J_h : H^1(\Omega) \cap L_0^2 (\Omega)  \rightarrow Q_h  $ defined by
$$\Big(\nabla (J_h p-p), \nabla q_h \Big)=0 \quad \forall\, q_h \in Q_h ,$$
satisfied the approximation property
 $$ \|p- J_h p \|_{L^2} \le C\, h \, \| p\|_{H^1}
  \quad \forall\, p \in H^1(\Omega) \cap L_0^2 (\Omega).
$$
\end{enumerate}
\end{enumerate}
\begin{remark}[Choice of  $I_h$]
For instance, if we consider the  $\P_1\hbox{-bubble}\times \P _1$
approximation to construct the space ${\bf Y}_h\times Q_h$, then a
possible manner to choose  $I_h$ is as follows: Let $\widetilde I_h$
be a regularization interpolation operator (of Cl\'ement or
Scott-Zhang type) onto the globally continuous and locally $\P _1$
FE space, that is $\widetilde{I}_h {\bf u} \in C^0 (\overline{\Omega})$ and $\widetilde{I}_h {\bf u}|_{T} \in \P _1$ for each
$T\in {\cal T}_h$. Then, $\widetilde I_h$ satisfies
\begin{equation}\label{interp-properties}
 |\widetilde I_h {\bf u}-{\bf
u}|\le C\,h\, \| {\bf u} \|,\quad \|\widetilde I_h {\bf u}-{\bf
u}\|\le C\,h\, \| {\bf u} \|_{H^2}.
\end{equation}
 We define
 $I_h {\bf u} = \widetilde I_h {\bf u} + R_h {\bf u}$, where $R_h {\bf u}= \sum _{T}
\b_T \, \alpha_T({\bf u})$ with $\b_T$ a bubble function and
$\alpha_T\in \R^3$ such as 
\begin{equation}\label{zero-local-average}
\int_T ({\bf u} -I_h {\bf u})=0,\quad  \forall\,T\in {\cal T}_h,
\end{equation}
that is
$$
\alpha _T ({\bf u}) = \frac{\int _T ({\bf u} -\widetilde I_h {\bf
u} )}{\int_T \b _T}\quad \forall\, T\in {\cal T}_h.
$$
 Then,  (\ref{orthog-interp-u}) can be deduced from (\ref{zero-local-average}). 
 Moreover, by using again (\ref{zero-local-average}), 
   it is  known by means of a duality argument (\cite{gr}) that
    $$\| {\bf u}-I_h {\bf u} \|_{H^{-1}} \le C\,
 h\, | {\bf u}-I_h {\bf u} |.$$

 Now, in order  to obtain estimate (\ref{interp-properties}) but 
 changing $ \widetilde I_h$ by $I_h$ it suffices to prove 
$$| R_h ({\bf u}) | \le C\, h\, \| {\bf u} \|\quad\hbox{and}  \quad | \nabla \, R_h
{\bf u} | \le C\, h \, \| {\bf u} \|_{{\bf H}^2}.$$
Indeed, by using orthogonality of the bubble functions,
\begin{eqnarray*}
 \| R_h ({\bf u}) \|_{L^2} ^2 &=& \sum_{T}
 \Big | \alpha_T ({\bf u}) \Big | ^2 \, \| \b_T \|_{L^2}^2 =
 \sum_T \Big (  \int _T {\bf u} -\widetilde I_h {\bf u}  \Big
)^2 \frac{\int_T |\b _T|^2 }{\Big ( \int_T \b_T \Big )^2}
\\
&\le& C \sum_T  |T| \Big ( \int_T | {\bf u}- \widetilde  I_h {\bf u} | ^2 \Big ) \,
 \displaystyle\frac{ |T|}{| T| ^2} \le C \, \| {\bf u} - \widetilde
 I_h {\bf u}\|_{L^2}^2
\end{eqnarray*}
hence $| R_h ({\bf u}) | \le C\, h\, \| {\bf u} \|$, owing to the
approximation property $ |{\bf u}-\widetilde I_h {\bf u} | \le C\,
h\, \| {\bf u}\|$.

Taking the $L^2$-norm of the gradient,
$$ \| \nabla R_h ({\bf u}) \|_{L^2} ^2 = \sum_T \Big (  \int _T {\bf u} -\widetilde I_h {\bf u}  \Big)^2 \frac{\int_T |\nabla \b _T|^2 }{\Big ( \int_T \b_T \Big )^2}
 \le C \sum_T  |T| \Big ( \int_T | {\bf u}- \widetilde  I_h {\bf u} | ^2 \Big ) \,
 \displaystyle\frac{ 1}{| T| ^2} \le \frac{C}{h^2} \, \| {\bf u} - \widetilde
 I_h {\bf u}\|_{L^2}^2 $$
 hence $ \| \nabla R_h ({\bf u}) \|_{L^2} ^2 \le C\, h^2 \| {\bf u} \|_{H^2}^2$, owing to the approximation property $\| {\bf u} - \widetilde I_h {\bf u}\|_{L^2} \le C\, h^2 \| {\bf u} \|_{H^2}.$
\end{remark}
\bigskip 

\noindent Now, following the equality $ \uu= \utilde -k \, \nabla
(p^{m+1} -p^m )$, we define:
\begin{equation}\label{def-interp-aux}
K_{h,k} \uu := I_h
\utilde - k \, \nabla J_h (p^{m+1} -p^m).
\end{equation}
Note that $K_{h,k} \uu \in {\bf Y}_h + \nabla
Q_h$. By  comparing (\ref{def-interp-aux}) with the time discrete Algorithm~\ref{alg-time}:
$$\uu -K_{h,k} \uu = \utilde -I_h \utilde -k \, \nabla \Big ( (p^{m+1}
-J_h p^{m+1}) - (p^m- J_h p^{m})  \Big ),$$
hence, using the $L^2$ approximation property for $I_h$, the $H^1$-stability for $J_h$ and the $H^2 \times H^1$ estimates for $(\widetilde{\bf u}^{m+1}, p^{m+1})$:
$$
|\uu -K_{h,k} \, \uu |  \le C\,\Big( h^2 \,\| \utilde \|_{H^2} +k\,
\|p^{m+1} -p^m \| \Big) \le C (k+h^2)  \quad \forall m.
$$


The fully discrete scheme is described in Algorithm~\ref{algo-fully}.
\begin{algorithm}[htbp]
\begin{description}
\item[Initialization:]  Let $(\widetilde{\bf u}_h^0,p_h^0) \in {\bf Y}_h \times Q_h$ be an approximation of $({\bf u}^0, p^0)$. Put ${\bf u}_h^0=\widetilde{\bf u}_h^0$.
\item[Step of time $m+1$:]  Let $(\widetilde{\bf u}_h^m ,p_h^m)\in {\bf
Y}_h \times Q_h$ and ${\bf u}_h^m \in {\bf Y}_h+\nabla Q_h $ be given.
\begin{description}
\item[\bf Sub-step 1]: 
 Find  $\widetilde{\bf u}_h^{m+1}\in {\bf Y}_h$  such  that,
$$
\Big(\frac{\widetilde{\bf
u}_h^{m+1}-{\bf u}_h^{m}}{k},  {\bf v}_h\Big) + c\Big( \widetilde{\bf
u}_h^{m}, \widetilde{\bf u}_h^{m+1},  {\bf v}_h\Big)
 +  \Big(\nabla \,  \widetilde{\bf u}_h^{m+1},  \nabla \, {\bf v}_h\Big)
 +\Big(\nabla p_h ^{m},   {\bf v}_h\Big)  =
\Big({\bf f}^{m+1},  {\bf v}_h\Big).
\leqno{(S_1)^{m+1}_h}
$$
\item[\bf Sub-step 2]: Find $p_h^{m+1}\in Q_h$ such that
$$\Big(k\, \nabla (p_h^{m+1}- p_h^{m} ) , \nabla q_h\Big )= \Big(\widetilde{\bf u}_h ^{m+1},
\nabla q_h \Big) \quad \forall q_h \in Q_h .
\leqno{(S_2)^{m+1}_{a,h}}$$
Now, we  define ${\bf u}_h^{m+1}\in {\bf Y}_h+\nabla Q_h  $ by
$$
\uu_h = \utilde_h -k\, \nabla (p_h^{m+1} -p_h^m).
\leqno{(S_2)^{m+1}_{b,h}}$$
\end{description}
\end{description}
\caption{Fully discrete algorithm}\label{algo-fully}
\end{algorithm}

Notice that, adding both sub-steps of Algorithm~\ref{algo-fully}, we obtain:
$$
\Big(\frac{{\bf u}_h^{m+1}-{\bf u}_h^{m}}{k},  {\bf v}_h\Big) +
c\Big( \widetilde{\bf u}_h^{m}, \widetilde{\bf u}_h^{m+1},  {\bf
v}_h\Big)
 +  \Big(\nabla \widetilde{\bf u}_h^{m+1},  \nabla {\bf v}_h\Big)
 +\Big(\nabla p_h ^{m+1},{\bf v}_h\Big)
 =
\Big({\bf f}^{m+1},  {\bf v}_h\Big).
\leqno{(S_3)^{m+1}_h}
$$
From $(S_2)^{m+1}_{b,h}$, one has the orthogonality property
\begin{equation}  \label{L2-orthog-discrete}
 \Big(\uu_h , \nabla q_h\Big)=0 \quad \forall q_h \in Q_h .
\end{equation}
\begin{remark}[Segregated version of Algorithm~\ref{algo-fully}]\label{segr-Al-2}
 We introduce the end-of-step velocity ${\bf u}_h^{m}$
only for doing the numerical analysis. For 
practical implementations, this velocity ${\bf u}_h^{m}$ can be eliminated, rewriting Algorithm~\ref{algo-fully} as follows:
\medskip

Let $(p_h^{m-1} , p_h^m,  \widetilde{\bf u}_h^m)\in Q_h\times Q_h\times {\bf
Y}_h$ be given.

\begin{description}
\item (a) Find $\utilde_h \in {\bf Y}_h $ such that,  $ \forall \,{\bf v}_h \in
{\bf Y}_h $:
$$ \Big ( \frac{\utilde_h -\widetilde {\bf u}_h^m}{k}, {\bf v}_h \Big )
+ c\Big( \widetilde{\bf u}_h^{m}, \widetilde{\bf u}_h^{m+1},  {\bf
v}_h\Big)
 +  \Big(\nabla \,  \widetilde{\bf u}_h^{m+1},  \nabla \, {\bf v}_h\Big) +
 \Big(\nabla ( 2 p_h ^{m} -p_h ^{m-1}),   {\bf v}_h\Big)  =
\Big({\bf f}^{m+1},  {\bf v}_h\Big).
$$

\item (b) Find $p_h^{m+1}\in Q_h$ such that, $\forall \,q_h \in Q_h$:
$$\Big(k\, \nabla (p_h^{m+1}- p_h^{m} ) , \nabla q_h\Big )=\Big (\widetilde{\bf u}_h ^{m+1},
\nabla q_h\Big )  .$$
\end{description}
Then,  computations for pressure $p_h^{m+1}$ and velocity $\utilde_h $ are decoupled. In
fact, $(a)$ is a linear convection-diffusion-Dirichlet problem for $\utilde_h $ (where each  component of $\utilde_h$ is 
also decoupled from the other ones)  and $(b)$  is a Poisson-Neumann problem for $p_h^{m+1}$. Therefore, Algorithm~\ref{algo-fully} can be rewritten as a fully decoupled scheme.
 
Note that, in order to initialize the scheme we have to start with  a pressure $p_h^{-1}$ which has not sense. We can avoid it starting    from 
an  auxiliary initial step given by either one-step scheme or  by  the scheme written as Algorithm~\ref{algo-fully}, i.e.,
given  $\widetilde{\bf u}^0_h$, $p^0_h$ and ${\bf u}^0_h= \widetilde{\bf
u}^0_h$, we compute first ${\widetilde{\bf u}}_h^1$ from $(S_1)_h^1$ and after  $p_h^1$ from $(S_2)_{a,h}^1$.
\end{remark}
\subsection{Stability and convergence of Algorithm~\ref{algo-fully}}
It is easy to  extend the results given in the previous Section
about the continuous dependence of the projection step  of Algorithm~\ref{alg-time}  to the fully discrete Algorithm~\ref{algo-fully}. Indeed, from
$(S_2)_{b,h}^{m+1}$ and the orthogonality property (\ref{L2-orthog-discrete}), we have
\begin{equation}\label{est-s2}
 |\utilde_h |^2= |\uu_h |^2 + | k\, \nabla (p_h^{m+1} -p_h^m)|^2
\end{equation}
hence, in particular, $ | \uu_h| \le | \utilde_h |$. From
$(S_2)_{a,h}^{m+1}$
$$ | k\, \nabla (p_h^{m+1} -p_h^m)| ^2 =(\utilde_h, k\,\nabla (p_h^{m+1}
-p_h^m ) ) =(\utilde_h -{\bf u}_h^m , k\,\nabla (p_h^{m+1}-p_h ^m
)), $$ hence
$$|\uu_h -\utilde_h | \le | \utilde_h -{\bf u}_h^m |.
$$

Moreover,
 using the antisymmetric property $c(\widetilde{\bf u}_h^m ,
 \widetilde{\bf u}_h^{m+1},\widetilde{\bf u}_h^{m+1})=0$
 (see (\ref{antisim})), one can extend the 
stability and convergence results of Algorithm~\ref{alg-time}
 to the fully discrete Algorithm~\ref{algo-fully}. In particular, for any $r<N$, the following
 stability estimates
 hold:
\begin{equation}\label{reg-debil-esq}
 \begin{array}{c}
\| {\bf u}_h^{r+1} \| _{l^{\infty}(L^2)} +
 \| \widetilde{\bf u}_h^{r+1} \| _{l^{\infty}(L^2)\cap l^2 (H^1)} + \| k\,
 \nabla p_h^{r+1} \|_{l^{\infty}(L^2)} \le C,
\\
\displaystyle \sum_{m=0}^{r} | \utilde_h -{\bf u}_h^m | ^2 +
\sum_{m=0}^{r} | \uu_h - \utilde_h | ^2 \le C.
\end{array}
\end{equation}
Indeed, by making $\Big ( (S_1)_h^{m+1} ,2\,k\, \umedio_h \Big )$,
using the fact that $$2\, k\, (\nabla p_h^m ,\utilde_h )= 2 (k\,
\nabla p_h^m , k \, \nabla (p_h^{m+1} -p_h^m)),$$ and the
equalities $(a-b)2a=a^2-b^2+(a-b)^2$ and
$(a-b)2b=a^2-b^2-(a-b)^2$, we have
\begin{equation}\label{est-s1}
\begin{array}{l}
| \utilde_h |^2 -|{\bf u}_h^m| ^2 + | \utilde_h -{\bf u}_h^m | ^2+ k\,
\| \utilde_h \| ^2 + | k\, \nabla p_h^{m+1} |^2- | k\, \nabla p_h^m|
^2 
\\
\quad - |k\, \nabla (p_h ^{m+1} -p_h^m )| ^2 \le k\,\|{\bf f}^{m+1}
\|_{H^{-1}}^2
\end{array}
\end{equation}
Adding (\ref{est-s2}) and (\ref{est-s1}), the negative term $-|k\,
\nabla (p_h^{m+1} -p_h^m) | ^2$ of (\ref{est-s1}) cancel and we arrive at
$$ | {\bf u}_h^{m+1} |^2 -|{\bf u}_h^m| ^2 + | \utilde_h -{\bf u}_h^m | ^2
 + | k\, \nabla p_h^{m+1} |^2 - | k\, \nabla
p_h^m| ^2 + k\,\| \utilde_h \|^2 \le k\, \|{\bf
f}^{m+1}\|_{H^{-1}}^2$$ Now, adding from ${m=0}$ to $r $ ($r<N$),
 we obtain the desired stability estimates
(\ref{reg-debil-esq}).
\subsection{Problems related to the spatial errors}
We will present  an error analysis for the fully discrete Algorithm~\ref{algo-fully}
$(\umedio_h , \uu _h , p_h^{m+1})$ as an approximation of the time
discrete  Algorithm~\ref{alg-time} $(\umedio, \uu , p^{m+1})$. Consequently, we
define the following  errors:
$$\ee _d =\uu -\uu_h, \qquad \emedio_d =\umedio -\umedio _h ,\quad e_{p,d}^{m+1} = p^{m+1} - p_h^{m+1} .
$$
Splitting the
discrete part  and the interpolation one:
$$\ee_d  = \ee_h   + \ee_i  , \qquad \emedio_d =
\emedio_h +\emedio_i , \qquad e_{p,d}^{m+1} = e_{p,h}^{m+1} +
e_{p,i}^{m+1} $$
 where ${\bf e}_i$ are interpolation errors and ${\bf e}_h$ space discrete
errors, concretely
$$  \ee_h = K_{h,k} \uu -\uu_h  \mbox{\  and \ } \ee_i = \uu -K_{h,k} \uu ,$$
$$
\emedio_h = I_h \umedio -\umedio_h  \mbox{\  and \ }  \emedio_i =
\umedio -I_h \umedio ,$$
$$  e_{p,h}^{m+1} = J_h p^{m+1} - p_h^{m+1}  \mbox{\  and \ }
e_{p,i}^{m+1} = p^{m+1} -  J_h p^{m+1}.
$$


%

\begin{remark}
From the equalities $ \uu = \utilde -k \, \nabla (p^{m+1} -p^m) $
and
 $K_{h,k} \uu = I_h \utilde -k \, \nabla  J_h (p^{m+1} -p^m )$,
one has
\begin{equation}\label{efinal-i}
 \ee_i = \etilde_i  -k \, \nabla (e_{p,i}^{m+1} -e_{p,i}^m).
\end{equation}
In particular, subtracting $\etilde_i$ and (\ref{efinal-i}) replacing $m$ for $m-1$, we get
\begin{equation}\label{ei-interm}
\frac{1}{k}(\etilde_i -{\bf e}_i^m) = e_i (\delta_t \utilde) +
\nabla(e_{p,i}^m -e_{p,i}^{m-1}),
\end{equation}
where $e_i (\delta_t \utilde)=(\etilde_i-\widetilde{\bf e}_i^m)/k$.
 Moreover,
owing to the choice of the interpolation operators $I_h$ and
$J_h$, from (\ref{efinal-i})
\begin{equation} \label{div-nula-ei}
  \Big (\ee_i , \nabla q_h \Big ) =
  \Big(\etilde_i, \nabla q_h \Big) \\
   - k \, \Big ( \nabla (e_{p,i}^{m+1} -e_{p,i}^m), \nabla q_h \Big )
    =0,\quad \forall\, q_h \in Q_h .
\end{equation}
On the other hand, since  $ \Big(\uu_h , \nabla q_h \Big)=0$ 
$\forall\, q_h \in Q_h $ and    $ \Big(\uu , \nabla q\Big )=0$ 
$\forall\, q\in H^1\cap L_0^2 $, then
\begin{equation}\label{div-nula-ed}
 \Big(\ee_d, \nabla q_h \Big)=0 \quad \forall q_h \in Q_h.
\end{equation}
Finally, from (\ref{div-nula-ei}) and (\ref{div-nula-ed}), we arrive
at
$$ \Big(\ee_h, \nabla q_h \Big)=0 \quad \forall q_h \in Q_h.
$$

\end{remark}


By comparing $(S_1)^{m+1},  (S_2)^{m+1}$ and $(S_1)^{m+1}_h,
(S_2)^{m+1}_{b,h}$, we have the following  problems
satisfied by the spatial errors $\emedio_d$ and
$(\ee_d,e_{p,d}^{m+1})$ respectively:
$$
\frac{1}{k}\Big(\emedio_d- {\bf e}_d^{m} , {\bf v}_h \Big)
 +  \Big( \nabla\,   \emedio_d , \nabla\,  {\bf v}_h \Big)
+ \Big(\nabla e_{p,d}^m ,  {\bf v}_h \Big) = {\bf NL}_h^{m+1}({\bf
v}_h) ,\quad \forall\, {\bf v}_h \in {\bf Y}_h,
$$
and
$$
{\bf e}_d^{m+1}= \emedio_d - k  \nabla (e_{p,d}^{m+1} -e_{p,d}^m
),
$$
 where
$${\bf NL}_h^{m+1}({\bf v}_h)= c\Big(\widetilde{\bf u}_h^m , \umedio _h ,
{\bf v}_h \Big)- c\Big( \widetilde{\bf u}^m , \umedio , {\bf v}_h
\Big)=-c\Big(\widetilde{\bf e}_d^m , \umedio , {\bf v}_h \Big)-
c\Big( \widetilde{\bf u}_h^m , \emedio_d , {\bf v}_h \Big).
$$
By splitting the error in the discrete and the interpolation
parts and  using (\ref{efinal-i}) and (\ref{ei-interm}), 
$$\left \{
\begin{array}{l}
\displaystyle\frac{1}{k}\Big(\emedio_h- {\bf e}_h^{m} , {\bf v}_h
\Big)
 +  \Big( \nabla\,   \emedio_h , \nabla\,  {\bf v}_h \Big)+\Big( \nabla
 e_{p,h}^m  ,  {\bf v}_h \Big)= {\bf NL}_h^{m+1}({\bf v}_h)
\\
\displaystyle  - \Big( e_i (\delta_t \utilde), {\bf v}_h\Big )
 - \Big(\nabla \etilde _i , \nabla {\bf v}_h \Big)- \Big(
 \nabla (2\, e_{p,i}^m -e_{p,i}^{m-1} ), {\bf v}_h\Big)
  ,\quad \forall\, {\bf v}_h \in {\bf Y}_h
\end{array} \right.
\leqno{(E_1)^{m+1}_h}
$$
$$
 \ee_h  = \emedio_h - k  \nabla
(e_{p,h}^{m+1} -e_{p,h}^m )  . \leqno{(E_2)^{m+1}_h}
$$
 Finally,
adding  $(E_1)_h^{m+1}$ and  $(E_2)_h^{m+1}$,
$$\left \{
\begin{array}{l}
\displaystyle\frac{1}{k}\Big(\ee_h- {\bf e}_h^{m} , {\bf v}_h
\Big)
 +  \Big( \nabla\,   \emedio_h , \nabla\,  {\bf v}_h \Big)+\Big( \nabla
 e_{p,h}^{m+1}  ,  {\bf v}_h\Big)= {\bf NL}_h^{m+1}({\bf v}_h)
\\
\displaystyle -\Big( e_i (\delta_t \utilde) , {\bf v}_h \Big)
 - \Big(\nabla \etilde _i , \nabla {\bf v}_h \Big)- \Big(
 \nabla (2\, e_{p,i}^m -e_{p,i}^{m-1} ), {\bf v}_h\Big)
  ,\quad \forall\, {\bf v}_h \in {\bf Y}_h.
\end{array} \right.
\leqno{(E_3)^{m+1}_h}
$$


\subsection{ $O(h)$  error estimates for  $\etilde_h$
in $l ^{\infty}( {\bf L}^2) \cap l ^{2}( {\bf H}^1)$ and for $ {\bf
  e}_h ^{m+1}$ in $l ^{\infty}( {\bf L}^2) $}

\begin{theorem}\label{dt1}
We assume  hypotheses of Theorem~\ref{v1} and the initial approximation
$$|{\bf e}_h^0 |
+ |k\,  \nabla e_{p,h}^0 | \le C\, h.
$$
 Then, the following
error estimates hold
\begin{equation}\label{dtr1}
\| \etilde _h   \|_{ l ^{\infty}( {\bf L}^2) \cap l ^{2}( {\bf
H}^1)}^2
 + \| {\bf e}_h^{m+1}  \| _{ l ^{\infty}( {\bf L}^2)}^2
+ \| k\,\nabla e_{p,h}^{m+1}  \| _{ l ^{\infty}( {\bf L}^2)}^2
  \le C\, h^{2},
\end{equation}
\begin{equation}\label{dtr1bis}
\| \etilde _h  -{\bf e}_h^m \| _{ l ^{2}( {\bf L}^2)}^2
 \le C\,k\,
  h^2  .
\end{equation}
\end{theorem}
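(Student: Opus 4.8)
The plan is to transpose the energy argument of Theorem~\ref{v1} to the spatial error, using the error equations $(E_1)^{m+1}_h$ and $(E_2)^{m+1}_h$ that compare Algorithm~\ref{algo-fully} with Algorithm~\ref{alg-time}. First I would test $(E_1)^{m+1}_h$ with ${\bf v}_h=2k\,\etilde_h$, which yields
$$|\etilde_h|^2-|{\bf e}_h^m|^2+|\etilde_h-{\bf e}_h^m|^2+2k\|\etilde_h\|^2+2k\big(\nabla e_{p,h}^m,\etilde_h\big)=\mathcal{R}_h^{m+1},$$
with $\mathcal{R}_h^{m+1}:=2k\,{\bf NL}_h^{m+1}(\etilde_h)-2k\big(e_i(\delta_t\utilde),\etilde_h\big)-2k\big(\nabla\etilde_i,\nabla\etilde_h\big)-2k\big(\nabla(2\,e_{p,i}^m-e_{p,i}^{m-1}),\etilde_h\big)$. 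Next I would multiply the forcing-free relation $(E_2)^{m+1}_h$ by $k({\bf e}_h^{m+1}+\etilde_h)+k^2\big(\nabla e_{p,h}^{m+1}+\nabla e_{p,h}^m\big)$ and use the discrete orthogonality $({\bf e}_h^{m+1},\nabla q_h)=0$ for all $q_h\in Q_h$, obtaining
$$|{\bf e}_h^{m+1}|^2-|\etilde_h|^2+|k\,\nabla e_{p,h}^{m+1}|^2-|k\,\nabla e_{p,h}^m|^2-2k\big(\etilde_h,\nabla e_{p,h}^m\big)=0.$$
Adding the two identities, the $|\etilde_h|^2$ terms and the cross terms $\pm2k(\nabla e_{p,h}^m,\etilde_h)$ cancel, leaving a relation with the telescopic quantities $|{\bf e}_h^{m+1}|^2$ and $|k\,\nabla e_{p,h}^{m+1}|^2$ on the left, entirely parallel to (\ref{24}).

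The core of the proof is the estimate of $\mathcal{R}_h^{m+1}$. For the three interpolation residuals, since $\etilde_h\in{\bf Y}_h\subset{\bf H}_0^1$ one first integrates by parts in the pressure term, rewriting it as $2k\big(2\,e_{p,i}^m-e_{p,i}^{m-1},\nabla\cdot\etilde_h\big)$, and then bounds each of the three terms by $\varepsilon\,k\|\etilde_h\|^2$ plus $C\,k$ times, respectively, $\|e_i(\delta_t\utilde)\|_{{\bf H}^{-1}}^2$, $\|\nabla\etilde_i\|^2$ and $|e_{p,i}^m|^2+|e_{p,i}^{m-1}|^2$. Writing $e_i(\delta_t\utilde)=\delta_t\utilde-I_h(\delta_t\utilde)$ and using (\ref{H-1approx}), the $L^2$- and $H^1$-approximation of $I_h$ and $J_h$, together with the uniform bounds $\|\utilde\|_{{\bf H}^2}\le C$ (Lemma~\ref{reg-h2}, Remark~\ref{reg-fuerte}), $\|\delta_t\utilde\|_{l^2({\bf L}^2)}\le C$ (from Theorem~\ref{lv12} and the regularity hypotheses) and $\|p(t_m)\|_{H^1}\le C$, each of these three contributions sums over $m$ to $O(h^2)$. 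For the nonlinear term I would expand
$${\bf NL}_h^{m+1}(\etilde_h)=-c\big(\widetilde{\bf e}_h^m,\utilde,\etilde_h\big)-c\big(\widetilde{\bf e}_i^m,\utilde,\etilde_h\big)-c\big(\widetilde{\bf u}_h^m,\etilde_h,\etilde_h\big)-c\big(\widetilde{\bf u}_h^m,\etilde_i,\etilde_h\big),$$
drop the third term by (\ref{antisim}), and bound $c(\widetilde{\bf e}_i^m,\utilde,\etilde_h)$ and $c(\widetilde{\bf u}_h^m,\etilde_i,\etilde_h)$ by $\varepsilon\,k\|\etilde_h\|^2$ plus, respectively, $C\,k\,|\widetilde{\bf e}_i^m|^2=O(k\,h^2)$ (since $|\widetilde{\bf e}_i^m|\le C\,h\,\|\utilde\|_{{\bf H}^1}$) and $C\,k\,\|\widetilde{\bf u}_h^m\|_{{\bf H}^1}^2\,\|\etilde_i\|_{{\bf H}^1}^2$, which sums to $O(h^2)$ by $\|\widetilde{\bf u}_h\|_{l^2({\bf H}^1)}\le C$ from (\ref{reg-debil-esq}) and $\|\etilde_i\|_{{\bf H}^1}\le C\,h$; here the embedding ${\bf H}^1\hookrightarrow{\bf L}^3$ and the divergence-free form of $c(\cdot,\cdot,\cdot)$ are used.

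The delicate term is $c(\widetilde{\bf e}_h^m,\utilde,\etilde_h)$: the space error $\widetilde{\bf e}_h^m$ is not controlled uniformly in ${\bf H}^1$, only in $l^2({\bf H}^1)$, so it must be kept in ${\bf L}^2$. Using the $\nabla\cdot$-free expression of $c(\cdot,\cdot,\cdot)$ (with $\widetilde{\bf e}_h^m$ in the transport slot), the embedding ${\bf H}^2(\Omega)\hookrightarrow{\bf W}^{1,3}\cap{\bf L}^\infty$ and $\|\utilde\|_{{\bf H}^2}\le C$, one gets $2k\,c(\widetilde{\bf e}_h^m,\utilde,\etilde_h)\le\varepsilon\,k\|\etilde_h\|^2+C\,k\,|\widetilde{\bf e}_h^m|^2$, and then the orthogonality in $(E_2)^m_h$ gives $|\widetilde{\bf e}_h^m|^2\le|{\bf e}_h^m|^2+2|k\,\nabla e_{p,h}^m|^2+2|k\,\nabla e_{p,h}^{m-1}|^2$, so this becomes a term handled by the discrete Gronwall Lemma~\ref{GronwallD}. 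After fixing $\varepsilon$ small enough to absorb all the $\varepsilon\,k\|\etilde_h\|^2$ contributions into $2k\|\etilde_h\|^2$ (and a fraction of $|\etilde_h-{\bf e}_h^m|^2$), summing in $m$ from $0$ to $r$, telescoping, using the initial bound $|{\bf e}_h^0|^2+|k\,\nabla e_{p,h}^0|^2\le C\,h^2$ and Lemma~\ref{GronwallD}, I would obtain (\ref{dtr1}) for ${\bf e}_h^{m+1}$ and $k\,\nabla e_{p,h}^{m+1}$, the $l^2({\bf H}^1)$-bound for $\etilde_h$, and (\ref{dtr1bis}) (as $k\sum_m|\etilde_h-{\bf e}_h^m|^2\le C\,k\,h^2$). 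Finally, the $l^\infty({\bf L}^2)$-bound for $\etilde_h$ in (\ref{dtr1}) follows from the orthogonality identity $|\etilde_h|^2=|{\bf e}_h^{m+1}|^2+|k\,\nabla(e_{p,h}^{m+1}-e_{p,h}^m)|^2\le|{\bf e}_h^{m+1}|^2+2|k\,\nabla e_{p,h}^{m+1}|^2+2|k\,\nabla e_{p,h}^m|^2\le C\,h^2$. I expect the main obstacle to be exactly the control of $c(\widetilde{\bf e}_h^m,\utilde,\etilde_h)$ described above — the point where the ${\bf H}^2$-stability of $\utilde$ (Lemma~\ref{reg-h2}) and the $(E_2)^m_h$ orthogonality are indispensable.
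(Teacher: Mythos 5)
Your proposal is correct and follows essentially the same route as the paper: the same energy identity for $(E_1)_h^{m+1}$ and $(E_2)_h^{m+1}$ combined through the discrete orthogonality $(\mathbf{e}_h^{m+1},\nabla q_h)=0$, the same splitting of ${\bf NL}_h^{m+1}$ with the crucial term $c(\widetilde{\bf e}_h^m,\utilde,\etilde_h)$ controlled via $\|\utilde\|_{{\bf H}^2}\le C$ and $|\widetilde{\bf e}_h^m|^2\le|{\bf e}_h^m|^2+2|k\nabla e_{p,h}^m|^2+2|k\nabla e_{p,h}^{m-1}|^2$, followed by discrete Gronwall and the recovery of $\|\etilde_h\|_{l^\infty({\bf L}^2)}$ from the projection identity. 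The only (immaterial) deviations are algebraic: you multiply $(E_2)_h^{m+1}$ by the Theorem~\ref{v1}-style combination instead of substituting directly, and you use the ${\bf H}^{-1}$-interpolation bound for $e_i(\delta_t\utilde)$ where the paper uses the ${\bf L}^2$ one.
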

\begin{remark}
By using the $O(k)$ accuracy for
the time discrete Algorithm~\ref{alg-time}, 
we
arrive at the following optimal order for the total error of the velocity:
$$ \|{\bf u}(t_{m+1}) -\utilde_h \|_{ l ^{\infty}( {\bf L}^2) \cap l ^{2}( {\bf
H}^1)} \le C \, (k+h).$$
\end{remark}
\noindent{\bf Proof}: By making $ \Big((E_1)_h^{m+1},2\, k\,
\emedio_h \Big)$ and using the equalities 
$$\Big(\nabla e_{p,h}^m, \ee_h
\Big)=0,$$ 
 $$2\, k\,
\Big(\nabla  e_{p,h}^m , \etilde_h \Big) = 2\Big(k\,\nabla
e_{p,h}^m, k\, \nabla (e_{p,h}^{m+1} -e_{p,h}^m) \Big) = | k\,
\nabla e_{p,h}^{m+1} |^2 - |k\, \nabla e_{p,h}^m | ^2- |k\, \nabla
(e_{p,h}^{m+1} -e_{p,h}^m) |^2,$$
  and the
$L^2$-orthogonality property
\begin{equation}\label{est-E2}
|\etilde_h |^2= |\ee_h|^2+ |k\, \nabla ( e_{p,h}^{m+1} -e_{p,h}^m) |^2,
\end{equation}
we arrive at
\begin{equation}\label{est-E1}
\begin{array}{l}
  |\ee_h |^2 -|{\bf e}_h^m|^2 + |\emedio_h - {\bf e}_h^m|^2+
2\, k\, \| \etilde_h  \|^2
 +  | k\, \nabla e_{p,h}^{m+1}
|^2 - |k\, \nabla e_{p,h}^m | ^2
  \\
   = -2\, k\, \Big(e_i (\delta_t \utilde) ,\emedio_h\Big) -2\,
k\,\Big(\nabla \emedio_i, \nabla\emedio_h \Big)-2\, k\, \Big(
\nabla (2\,e_{p,i}^m - e_{p,i}^{m-1}), \etilde_h \Big)\\
+ 2\, k\, c\Big(\widetilde{\bf e}_h^m, \utilde, \etilde_h \Big)
+2\,k\, c\Big(\widetilde{\bf e}_i^m , \utilde ,   \etilde_h\Big )
-2\,k\,c\Big( \widetilde{\bf u}_h^m , \etilde_h,\emedio_h \Big) \\
  -2\,k\, c\Big(\widetilde{\bf u}_h^m ,
\emedio _i, \emedio_h\Big )
 :=\sum_{i=1}^7 I_i
\end{array}
\end{equation}

We bound the RHS of (\ref{est-E1}) as follows (using  Remark \ref{reg-fuerte}):
$$
I_1 \le \varepsilon \, k\, | \etilde_h | ^2 + C \, k
|e_i(\delta_t \utilde) |^2 \le \varepsilon \, k\, \|
\etilde_h \| ^2 + C\,h^2 \, k \|\delta_t \utilde \|^2
$$
  $$
 I_2 \le \varepsilon \, k\, \|
\etilde_h \| ^2 + C\,h^2 \, k\, \|\utilde \|_{H^2}^2\le \varepsilon \, k\, \|
\etilde_h \| ^2 + C\,k\, h^2 
$$
$$
I_3=2\, k\, \Big(  (2\,e_{p,i}^m - e_{p,i}^{m-1}),
\nabla\cdot\etilde_h\Big) \le \varepsilon \, k\, \| \etilde_h \|
^2 +C\, k\, h^2 (\|p^m\|^2 + \| p^{m-1}\|^2) \le \varepsilon \, k\, \| \etilde_h \|
^2 +C\, k\, h^2
$$

With respect to the nonlinear terms,
\begin{eqnarray*}
I_4=2\, k \, c\Big(\widetilde{\bf e}_h^m , \utilde , \etilde_h
\Big)\le C\, k |\widetilde{\bf e}_h^m | \, \| \utilde  \|
_{W^{1,3} \cap L^{\infty}} \|  \etilde_h \| \le \varepsilon \, k\,
\| \etilde \| ^2 + C\, k\, | \widetilde{\bf e}_h^m | ^2
\end{eqnarray*}
$$ \le \varepsilon \, k\,
\| \etilde \| ^2 +C\, k\, \Big ( |{\bf e}_h^m |^2 + 2| k \, \nabla
e_{p,h}^{m}|^2+ 2|k \, \nabla e_{p,h}^{m-1}| ^2 \Big )
$$
(here,  (\ref{est-E2}) has been used),
\begin{eqnarray*}
I_5 =2\, k \, c\Big(\widetilde{\bf e}_i^m , \utilde , \etilde_h
\Big)\le \varepsilon \, k\, \| \etilde \| ^2 + C\, k\, |
\widetilde{\bf e}_i^m | ^2 \le \varepsilon \, k \, \| \etilde \|
^2 + C\, h^4  \, k\, \|\widetilde{\bf
  u}^m \| _{H^2}^2 
\end{eqnarray*}
$$ \le \varepsilon \, k \, \| \etilde \|
^2 + C\,k\,  h^4 ,
$$
\begin{eqnarray*}
I_6 =2\, k \, c\Big(\widetilde{\bf u}_h^m , \etilde_h, \etilde_h
\Big)=0,
\end{eqnarray*}
\begin{eqnarray*}
 I_7= 2\,k\, c\Big(\widetilde{\bf u}_h^m , \emedio _i, \emedio_h\Big) &\le&  C\,k
  \,\| \widetilde{\bf u}_h^m \| ^2 \, \| \emedio_i \|_{L^3} ^2+
\varepsilon \,k \| \emedio_h \| ^2 \\
   &\le &  C\,k \,\| \widetilde  {\bf u}_h^m \| ^2 \,
 | \emedio_i |  \,\| \emedio _i \|+
\varepsilon \,k \| \emedio_h \| ^2 \\
   &\le &  C\,k \,h^3 \, \| \widetilde{\bf u}_h^m \|^2 \,
   \,\| \umedio \|^2_{H^2} +
\varepsilon \,k \| \emedio_h \| ^2
\\
   &\le &  C\,k \,h^3 \, \| \widetilde{\bf u}_h^m \|^2 + \varepsilon \,k \| \emedio_h \| ^2
\end{eqnarray*}

Then, using these bounds in (\ref{est-E1}) we obtain
\begin{equation}\label{sumafinal}
\begin{array}{l}
    |{\bf e}_h^{m+1} |^2 -|{\bf e}_h^m|^2 + | k\, \nabla
e_{p,h}^{m+1} |^2 - |k\, \nabla e_{p,h}^m | ^2+ |\emedio_h - {\bf e}_h^m|^2+
 k\, \| \etilde_h  \|^2 \\
 \le C\, k\, \Big (|{\bf e}_h^m |^2 + | k \, \nabla
e_{p,h}^{m}|^2+ | k \, \nabla e_{p,h}^{m-1}| ^2\Big ) 
+C\, k\, h^2 + C\,k \,h^3
\, \| \widetilde{\bf u}_h^m \|^2 +C\, k\, h^2\, \|\delta_t \utilde\| ^2.
\end{array}
\end{equation}

Finally, by adding (\ref{sumafinal}) from
$m=0$ to $r$ (with any $r<M$), and using that $k\sum \| \widetilde{\bf
u}_h^m \|^2\le C$ and Theorem~\ref{lv12},
 the discrete Gromwall's
Lemma yields to
\begin{eqnarray*}
 |{\bf e}_h^{r+1} |^2  +| k\, \nabla
e_{p,h}^{r+1} |^2+ \sum_{m=0}^r  |\emedio_h - {\bf e}_h^m|^2+
 k\, \sum_{m=0}^r \| \etilde_h  \|^2  \le C\Big(|{\bf e}_h^0 |^2
+| k \,
 \nabla  e_{p,h}^0 | ^2 + h^2\Big)
\end{eqnarray*}
hence the estimates (\ref{dtr1})-(\ref{dtr1bis}) hold. \cqfd


Theorem~\ref{dt1} and  the inverse inequality $ \| {\bf u}_h \| \le C\,h^{-1} |{\bf u}_h   | $ for each ${\bf u}_h \in {\bf Y}_h$, imply the uniform estimate
\begin{equation}\label{uniform-estimate-H1}
 \| \emedio_h \| \le \frac{C}{h} \, | \emedio_h | \le C.
\end{equation}

\subsection{$O(h)$  for $\deltaee_h$ in $l ^{\infty}( {\bf L}^2)$,
 $\delta _t \emedio_h $ in $l ^{\infty}( {\bf L}^2) \cap l^{2}( {\bf H}^1)$
 and $(\etilde_h , e_{p,d}^{m+1})$ in $l ^{\infty}( {\bf H}^1 \times L^{2}) $ }

By making  $\delta_t (E_1)^{m+1}_h$ and $\delta_t (E_2)^{m+1}_h $,
one arrives at ($\forall\, m\ge 1 $):
$$
\frac{1}{k}  \Big(\delta_t \etilde_d -\delta_t {\bf e}_d^{m}, {\bf
v}_h  \Big)
 +  \,  \Big(\nabla   \delta_t \etilde_d ,\nabla\, {\bf v}_h
 \Big)-\Big(\delta_t \nabla
 e_{p,d}^{m} ,  {\bf v}_h \Big)
=  \delta_t  \,{\bf NL}_h^{m+1} ( {\bf v}_h )  \q \forall\, {\bf
v}_h\in {\bf Y}_h
$$
and
$$
\delta_t{\bf e}_d^{m+1}= \delta_t \emedio_d - k  \nabla (\delta_t
e_{p,d}^{m+1} -\delta_t e_{p,d}^m )
  $$
where
$$  \delta_t {\bf NL}_h^{m+1} ({\bf v}_h) = c\Big( \delta_t \widetilde{\bf
  e}_d^m  , \, \umedio, {\bf v}_h   \Big) + c\Big(\delta _t \widetilde{\bf
u}_h^m , \emedio_d , {\bf v}_h   \Big ) +  c\Big(\widetilde{\bf
e}_d^{m-1} ,  \delta _t \umedio , {\bf v}_h \Big) + c \Big(
\widetilde{\bf u}_h^{m-1} , \delta_t \emedio_d  , {\bf v}_h \Big).
$$
On the other hand, the following $L^2$-orthogonality property holds:
  $$
\Big( \delta_t \ee_d , \nabla q_h\Big)=0,\quad \forall\, q_h  \in
Q_h.
  $$
Consequently, for each ${\bf v}_h\in {\bf Y}_h$, one has
$$
 \left \{ \begin{array}{l}
\dis\frac{1}{k}  \Big(\delta_t \etilde_h -\delta_t {\bf e}_h^{m},
{\bf v}_h \Big )
 +  \, \Big (\nabla   \delta_t \etilde_h ,\nabla\, {\bf v}_h \Big )+
\Big (\nabla \delta_t
 e_{p,h}^{m} ,   {\bf v}_h\Big )
=  \delta_t  \,{\bf NL}_h^{m+1} ( {\bf v}_h )
\\
\quad  -\Big({\bf e}_i(\delta_t \delta_t \utilde),{\bf v}_h \Big) - \Big(\nabla \delta_t
\etilde_i, \nabla {\bf v}_h \Big) -\Big( \nabla (2 \, \delta_t
e_{p,i}^m - \delta_t e_{p,i}^{m-1} ), {\bf v}_h \Big) ,\end{array} \right.
\leqno{(D_1)^{m+1}_h}
$$
$$
 \delta_t \ee_h  = \delta_t \emedio_h - k
 \nabla (\delta_t e_{p,h}^{m+1} -\delta_t e_{p,h}^m ),
\leqno{(D_2)^{m+1}_h}
$$
and the following discrete $L^2$-orthogonality property:
\begin{equation}\label{discrete-orthog}
\Big( \delta_t \ee_h , \nabla q_h \Big)=0,\quad \forall\, q_h  \in Q_h.
\end{equation}
In the last two equalities, some properties of  the interpolation operators have been used.

\begin{theorem}\label{dt2} Under the hypotheses of Theorems
  \ref{lv12} and \ref{dt1}, assuming the following approximation for the first step of Algorithm~\ref{algo-fully}
 \begin{equation}\label{first-step}
 | \delta_t {\bf e}_h^{1} | + | k \,
 \nabla \delta_t e_{p,h}^1 | \le C\, h , \quad  k\, \|  \delta_t \widetilde{\bf e}_h^{1} \|^2 \le C\, h^2
\end{equation}
  then
\begin{equation}\label{error-delta_t-space}
\| \deltaee_h \|_{l^{\infty}({\bf L}^2)} +\| \delta_t \etilde _h
\|_{l^{\infty}({\bf L}^2)\cap l^2({\bf H}^1)} + \|k\,
\delta_t\nabla e_{p,h}^{m+1} \|_{l^{\infty}({\bf L}^2)} \le C\, h.
\end{equation}
\end{theorem}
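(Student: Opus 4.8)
{}
The plan is to carry out the space-discrete counterpart of Theorem~\ref{lv12}, i.e.\ to repeat the energy procedure of Theorem~\ref{dt1} on the discrete-in-time derivative problems $(D_1)^{m+1}_h$ and $(D_2)^{m+1}_h$ instead of on $(E_1)^{m+1}_h$, $(E_2)^{m+1}_h$. First I would test $(D_1)^{m+1}_h$ with ${\bf v}_h = 2\,k\,\delta_t\etilde_h\in{\bf Y}_h$, and use the discrete orthogonality (\ref{discrete-orthog}) together with the identity
$$
2\,k\,(\nabla\delta_t e_{p,h}^m,\delta_t\etilde_h)=|k\,\nabla\delta_t e_{p,h}^{m+1}|^2-|k\,\nabla\delta_t e_{p,h}^{m}|^2-|k\,\nabla(\delta_t e_{p,h}^{m+1}-\delta_t e_{p,h}^{m})|^2
$$
and the $L^2$-orthogonal splitting $|\delta_t\etilde_h|^2=|\delta_t\ee_h|^2+|k\,\nabla(\delta_t e_{p,h}^{m+1}-\delta_t e_{p,h}^{m})|^2$ (a consequence of $(D_2)^{m+1}_h$ and (\ref{discrete-orthog})), exactly as in Theorem~\ref{dt1}, so as to reach an energy identity of the form
$$
|\delta_t\ee_h|^2-|\delta_t{\bf e}_h^m|^2+|\delta_t\etilde_h-\delta_t{\bf e}_h^m|^2+2\,k\,\|\delta_t\etilde_h\|^2+|k\,\nabla\delta_t e_{p,h}^{m+1}|^2-|k\,\nabla\delta_t e_{p,h}^{m}|^2=R^{m+1},
$$
where $R^{m+1}=2\,k\,\delta_t{\bf NL}_h^{m+1}(\delta_t\etilde_h)$ minus the three interpolation terms $2\,k\,[\,({\bf e}_i(\delta_t\delta_t\utilde),\delta_t\etilde_h)+(\nabla\delta_t\etilde_i,\nabla\delta_t\etilde_h)+(\nabla(2\,\delta_t e_{p,i}^m-\delta_t e_{p,i}^{m-1}),\delta_t\etilde_h)\,]$.

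The next step is to bound $R^{m+1}$. The three interpolation terms are treated exactly like the terms $I_1,I_2,I_3$ in the proof of Theorem~\ref{dt1}: since ${\bf e}_i(\delta_t\delta_t\utilde)=\delta_t\delta_t\utilde-I_h(\delta_t\delta_t\utilde)$, $\delta_t\etilde_i=\delta_t\utilde-I_h(\delta_t\utilde)$ and $\delta_t e_{p,i}^m=\delta_t p^m-J_h(\delta_t p^m)$, I would pair the first one with $\|\delta_t\etilde_h\|$ through (\ref{H-1approx}), bound the second one with the $H^1$-approximation property of $I_h$, and the third one with the $L^2$-approximation property of $J_h$ after an integration by parts; after summation these three terms give $C\,h^2$, using $\|\delta_t\delta_t\utilde\|_{l^2({\bf L}^2)}\le C$ (from (\ref{reg-delta-delta})), $\|\delta_t\utilde\|_{l^2({\bf H}^2)}\le C$ (Lemma~\ref{reg-delta-h2}) and $k\sum_m\|\delta_t p^m\|_{H^1}^2\le C$ (which follows from Theorem~\ref{lv12} and $p_t\in L^2(H^1)$). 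For the nonlinear part I would expand $\delta_t{\bf NL}_h^{m+1}$ into its four trilinear pieces, split each velocity error into discrete and interpolation parts (for instance $\delta_t\widetilde{\bf u}_h^m=\delta_t\utilde-\delta_t\widetilde{\bf e}_h^m-\delta_t\widetilde{\bf e}_i^m$ and $\emedio_d=\emedio_h+\emedio_i$), and use the antisymmetry (\ref{antisim}) to cancel the only genuinely dangerous contribution $c(\widetilde{\bf u}_h^{m-1},\delta_t\etilde_h,\delta_t\etilde_h)=0$. In every surviving term exactly one factor is a space-discrete error, which I keep in $|\cdot|$ (or in $\|\cdot\|$, to be absorbed by $\varepsilon\,k\,\|\delta_t\etilde_h\|^2$), while the other factors are controlled by the (uniform, respectively $l^2$ in time) ${\bf H}^2$-bounds for $\utilde$ and $\delta_t\utilde$ of Lemma~\ref{reg-h2}, Remark~\ref{reg-fuerte} and Lemma~\ref{reg-delta-h2}, and by the $O(h)$ interpolation bounds for $\delta_t\widetilde{\bf e}_i^m$ and $\emedio_i$; the factor $|\delta_t\widetilde{\bf e}_h^m|^2$ arising in the bounds is rewritten through the $L^2$-orthogonal splitting $|\delta_t\widetilde{\bf e}_h^m|^2=|\delta_t{\bf e}_h^m|^2+|k\,\nabla(\delta_t e_{p,h}^m-\delta_t e_{p,h}^{m-1})|^2$, so that it only involves quantities already tracked by the Gronwall functional. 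Finally, collecting the bounds, summing from $m=1$ to $r$, absorbing the $(k/2)\,\|\delta_t\widetilde{\bf e}_h^m\|^2$-type remainders into $2\,k\sum\|\delta_t\etilde_h\|^2$ by the one-index shift used in Theorem~\ref{lv12}, using Theorems~\ref{v1}, \ref{lv12} and \ref{dt1} to bound the non-absorbed terms, and the first-step hypothesis (\ref{first-step}), the discrete Gronwall Lemma~\ref{GronwallD} gives $\|\delta_t\ee_h\|_{l^\infty({\bf L}^2)}^2+\|\delta_t\etilde_h\|_{l^2({\bf H}^1)}^2+\|k\,\nabla\delta_t e_{p,h}^{m+1}\|_{l^\infty({\bf L}^2)}^2\le C\,h^2$; the remaining estimate $\|\delta_t\etilde_h\|_{l^\infty({\bf L}^2)}\le C\,h$ then follows from the splitting $|\delta_t\etilde_h|^2=|\delta_t\ee_h|^2+|k\,\nabla(\delta_t e_{p,h}^{m+1}-\delta_t e_{p,h}^m)|^2$.

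The step I expect to be the main obstacle is the nonlinear term. There, the pieces of $\delta_t{\bf NL}_h^{m+1}$ that contain $\delta_t\widetilde{\bf e}_d^m$ --- precisely the quantity being estimated --- have to be arranged so that the only factor measured in a norm stronger than $L^2$ is the smooth one ($\utilde$ or $\delta_t\utilde$); this is exactly why the ${\bf H}^2$-stability results of Lemma~\ref{reg-h2}, Remark~\ref{reg-fuerte} and Lemma~\ref{reg-delta-h2} are needed. For the few terms where this cannot be arranged (products of two space-discrete errors that cannot both be placed in $L^2$), one must instead invoke the inverse inequality $\|{\bf v}_h\|\le C\,h^{-1}|{\bf v}_h|$ together with (\ref{dtr1}), checking that the surviving powers of $h$ still leave an $O(h)$ estimate. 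Keeping this bookkeeping consistent, and carrying out the one-index-shift absorption of the $\|\delta_t\widetilde{\bf e}_h^m\|^2$ terms correctly, is the delicate part of the argument.
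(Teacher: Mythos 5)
Your proposal follows essentially the same route as the paper's proof: test $(D_1)^{m+1}_h$ with $2\,k\,\delta_t\etilde_h$, use the discrete orthogonality and the $L^2$-orthogonal splitting from $(D_2)^{m+1}_h$ to form the same energy identity, bound the interpolation terms via \eqref{H-1approx}, Lemma~\ref{reg-delta-h2} and \eqref{reg-delta-delta}, treat the four trilinear pieces of $\delta_t{\bf NL}_h^{m+1}$ by the discrete/interpolation splitting with the antisymmetry cancellation and the uniform bound \eqref{uniform-estimate-H1}, and close with the index-shift absorption, Gronwall and the first-step hypothesis. The argument and all the key ingredients match the paper's; no gaps.
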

\noindent {\bf Proof:} Since the initial estimate   $| \delta_t
{\bf e}_h^{1} | + | k \,
 \nabla \delta_t e_{p,h}^1 | \le C\, h $
is assumed, it suffices to prove \eqref{error-delta_t-space} for each $m\ge 1$.

 By adding   $(D_1)^{m+1}_h$ multiplied by $2\, k\, \delta_t \emedio_h  \in {\bf Y}_h$, where the pressure term is  writing as
\begin{eqnarray*}
  2\, k\, \Big(\nabla
\delta_t e_{p,h}^m , \delta_t \etilde_h \Big)&=&  2\, k\,
\Big(\nabla \delta_t e_{p,h}^m, k\, \nabla (\delta_t e_{p,h}^{m+1}
-\delta_t
e_{p,h}^m) \Big)  \\
   &=&  | k\, \nabla \delta_t e_{p,h}^{m+1} |^2 - |k\,
\nabla \delta_t e_{p,h}^m | ^2- |k\, \nabla (\delta_t
e_{p,h}^{m+1} -\delta_t e_{p,h}^m) |^2
\end{eqnarray*}
\Big (here $\Big (\nabla \delta_t e_{p,h}^m, \delta_t \ee_h \Big
)=0$ has been used\Big), and  the equality 
$$
|\delta_t \etilde_h |^2= |\delta_t \ee_h|^2+ |k\, \nabla (\delta_t
e_{p,h}^{m+1} -\delta_t e_{p,h}^m) |^2$$
(which is deduced from $(D_2)_h^{m+1}$ and the discrete 
$L^2$-orthogonality \eqref{discrete-orthog}),  one has
 \begin{equation}\label{dte1}
   \begin{array}{l}
     |  \delta_t \ee_h | ^2 -
|  \delta_t  {\bf e}_h^m | ^2 + |  \delta_t \emedio_h -\delta_t
{\bf e}_h^m | ^2 + 2  \, k \| \delta_t  \emedio_h \| ^2
+ | k\, \nabla \, \delta_t e_{p,h}^{m+1} |^2
\\
- | k\, \nabla \, \delta_t
e_{p,h}^m | ^2 
  = -2\, k\, \Big({\bf e}_i (\delta_t \delta_t \utilde),  \delta_t
  \emedio_h  \Big)
 - 2\, k\,\Big(\nabla
\delta_t \etilde_i, \nabla \delta_t \emedio_h\Big )
\\ -2\, k \,\Big(
\nabla (2 \, \delta_t e_{p,i}^m - \delta_t e_{p,i}^{m-1} ),
\delta_t \emedio_h \Big)
 +
  2\, \delta_t  {\bf NL}_h^{m+1}  ( \delta_t \emedio_h  )
:= I_1 +I_2 + I_3 +I_4.
 \end{array}
\end{equation}
We bound the RHS of \eqref{dte1} as:
$$I_1 \le \varepsilon \, k\, \| \delta_t \etilde_h \|^2 + C\, k \,
h^2 \, | \delta_t  \delta_t \utilde |^2$$
(here the hypothesis \eqref{H-1approx} on the $O(h)$-approximation of $I_h$ in the ${\bf H}^{-1}$-norm has been used),
$$I_2 \le  \varepsilon \, k\, \| \delta_t \etilde_h \|^2 + C\, k \,h^2
\| \delta_t  \utilde \| _{{\bf H}^2}^2 $$
$$ I_3 \le \varepsilon \, k\, \| \delta_t \etilde_h \|^2 + C\, k \,
h^2 (\| \delta_t p^m \|^2 + \| \delta_t p^{m-1} \|^2 ).
$$
The nonlinear terms, for $m\ge 1$, are treated as follows:
\begin{eqnarray*}
I_4 &= & 2\, k\, c\Big( \delta_t \widetilde{\bf e}_d^m  , \,
\umedio, \, \delta_t \emedio_h \Big) + 2\, k \, c\Big(\delta _t
\widetilde{\bf u}_h^m , \emedio_d, \,\delta_t \emedio_h  \Big)
\\
&+& 2\,k \, c\Big(\widetilde{\bf e}_d^{m-1} ,  \delta _t \umedio ,
  \, \delta_t \emedio_h \Big)
+ 2\, k\,c\, \Big( \widetilde{\bf u}_h^{m-1} , \delta_t \emedio_d
,  \,  \delta_t \emedio_h   \Big )
 :=
\sum_{i=1}^4 J_i
\end{eqnarray*}
We bound each  $J_i$-term as follows:
$$
J_1 = 2\, k \, c\Big( \delta_t \widetilde{\bf e}_h^m  , \,
\umedio, \, \delta_t \emedio_h  \Big)+ 2\, k\, c\Big( \delta_t
\widetilde{\bf e}_i^m , \, \umedio, \, \delta_t \emedio_h
\Big):=J_{11}+J_{12}
$$
$$J_{11}
\le \varepsilon \, k \| \delta_t \emedio_h \|^2 + C\, k \, \| \umedio
\|_{W^{1,3}\cap L^{\infty}}^2 \, | \delta_t  \widetilde{\bf e}_h^m |^2
$$
$$ 
\le \varepsilon \, k \, \| \delta_t \emedio_h \|^2 +C\, k\, \,\Big ( 
|\delta_t {\bf e}_h^m |^2 +2 (|k\, \nabla \delta_t e_p^m| ^2+ | k\,
\nabla \delta_t e_p ^{m-1} | ^2 )
\Big ),
$$
$$
J_{12}  \le \varepsilon \, k \| \delta_t  \etilde_h \|^2 +
C\, k \, \| \umedio\|^2 \, | \delta_t \widetilde{\bf e}_i^m  |^2 \le
\varepsilon \, k \| \delta_t  \widetilde{\bf e}_h ^{m+1} \|^2 + C\, k\, h^2 \,
\|\delta_t \widetilde{\bf u}^m \|^2,
$$

$$ J_2=  2\, k \, c\Big(\delta _t
\widetilde{\bf e}_h^m , \emedio_d, \,\delta_t \emedio_h  \Big)+   2\, k \, c\Big(\delta _t
I_h \, \widetilde{\bf u}^m , \emedio_h +\emedio_i, \,\delta_t
\emedio_h  \Big)$$
$$  \le \varepsilon \, k\, \Big ( \| \delta_t \widetilde {\bf e}_h^{m+1} \|
^2 +\| \delta_t \widetilde {\bf e}_h^{m} \| ^2 \Big )  + C\, k \, \| \etilde _d
\|^2  \, | \delta_t \widetilde {\bf e}_h^m |
^2 + C\, k\, \|I_h \, \delta _t
 \widetilde{\bf u}^m \|^2 \,  \Big (  \|
 \widetilde {\bf e}_i ^m \| ^2 + \|
 \widetilde {\bf e}_h ^m \| ^2\Big )
$$
$$  \le \varepsilon \, k\, \Big ( \| \delta_t \widetilde {\bf e}_h^{m+1} \|
^2 +\| \delta_t \widetilde {\bf e}_h^{m} \| ^2 \Big ) + C\, k \, | \delta_t \widetilde {\bf e}_h^m |
^2 + C\, k\, \|\delta _t
 \widetilde{\bf u}^m \|^2 \,  \Big ( h^2 \|
 \widetilde {\bf u} ^m \|_{{\bf H}^2} ^2 + \|
 \widetilde {\bf e}_h ^m \| ^2\Big )
$$
$$ \le \varepsilon \, k\, \Big ( \| \delta_t \widetilde {\bf e}_h^{m+1} \|
^2 +\| \delta_t \widetilde {\bf e}_h^{m} \| ^2 \Big ) 
+C\, k\, \,\Big ( 
|\delta_t {\bf e}_h^m |^2 +2 (|k\, \nabla \delta_t e_p^m| ^2+ | k\,
\nabla \delta_t e_p ^{m-1} | ^2 )\Big)+C\,k\, h^2+C\, k\,  \|
 \widetilde {\bf e}_h ^m \| ^2
$$
(in the last inequality we use $ \|
\etilde_d\| \le C$, due to (\ref{uniform-estimate-H1})  and
$\|\widetilde{\bf u}^{m}\|_{H^2} \le C$),
\medskip
$$J_3 = 2\,k \, c\Big(\widetilde{\bf e}_h^{m-1} , \delta _t \umedio ,
  \, \delta_t  \emedio_h  \Big)+ 2\,k \, c\Big(\widetilde{\bf e}_i^{m-1} , \delta _t \umedio ,
  \, \delta_t \emedio_h  \Big):=J_{31}+J_{32}
$$
$$J_{31}\le  C\,k \, \| \widetilde{\bf e}_h^{m-1} \| \, \|   \deltaumedio
\|_{L^3}  \,\| \delta_t  \emedio_h   \|
\le \varepsilon \, k\,  \|\deltaemedio_h \|^2 +C\, k
\, \| \widetilde{\bf e}_h^{m-1} \| ^2,$$
$$J_{32} \le \varepsilon \, k\, \| \delta_t  \emedio_h  \|^2 + C\, k\, \| \widetilde{\bf e}_i^{m-1} \| ^2
\le \varepsilon \, k\, \|\deltaemedio_h \|^2 +C\, k \| \delta_t
\utilde \|_{H^2} ^2 \, \| \widetilde{\bf e}_i ^{m-1} \| ^2,
$$

$$
J_4= 2\, k\,c\Big( \widetilde{\bf u}_h^{m-1} , \delta_t \emedio_h
,  \, \delta_t  \emedio_h   \Big)+ 2\, k\,c\Big( \widetilde{\bf
u}_h^{m-1} , \delta_t \emedio_i ,
  \, \delta_t \emedio_h  \Big ):= J_{41}+ J_{42}
$$
$$J_{41}=0,$$
$$
 J_{42} \le C\, k\, \, \|\widetilde{\bf u}_h ^{m-1} \|\, \| \delta_t \emedio_i
\|_{L^3} \,  \| \delta_t \emedio_h  \|
   \le \varepsilon\, k\, \|\delta_t \emedio_h \| ^2
+ C\, k\,\| \widetilde {\bf u}_h^{m-1} \| ^2\, h^3  \| \delta_t \umedio \|_{H^2}^2
$$
(here,  we use  
$ \| \delta_t
\emedio_i \|_{L^3}\le C \, | \delta_t  \widetilde {\bf e}_i^{m+1}
| ^{1/2} \, \|  \delta_t \emedio_i \| ^{1/2} \le C\, h^{3/2} \|
\delta_t \umedio \|_{H^2}
$). 

By applying these estimates in (\ref{dte1}) for a small enough $\varepsilon$,  we obtain
\begin{eqnarray*}
   &&  |\delta_t {\bf e}_h^{m+1} |^2 -|\delta_t {\bf e}_h^m|^2 + |\delta_t
   \emedio_h - \delta_t {\bf e}_h^m|^2+
 k\, \| \delta_t \etilde_h  \|^2 +| k\, \nabla
\delta_t e_{p,h}^{m+1} |^2 - |k\,  \nabla\, \delta_t e_{p,h}^m | ^2
 \\
&&\le\ C\, k\, \,\Big ( 
|\delta_t {\bf e}_h^m |^2 +2 (|k\, \nabla \delta_t e_p^m| ^2+ | k\,
\nabla \delta_t e_p ^{m-1} | ^2 )
\Big )+C\, k\, h^2 + \frac{k}{2} \, \| \delta_t
 \widetilde{\bf e}^m \| ^2 \\
&& + C\, k \,
h^2 \,\Big ( | \delta_t  \delta_t \utilde |^2 + 
\| \delta_t  \utilde \| _{H^2}^2 \Big )+C\, k\,  \|
 \widetilde {\bf e}_h ^m \| ^2 .
\end{eqnarray*}
Therefore, by adding from $m=1$ to $r$
(with any $r<M$), taking into account (\ref{reg-delta-delta}), 
Lemma \ref{reg-delta-h2} and Theorem \ref{dt1}, the discrete Gromwall's Lemma can be applied, yielding to 
\begin{eqnarray*}
&& |\delta_t {\bf e}_h^{r+1} |^2 +| k\, \nabla
\delta_t e_{p,h}^{r+1} |^2+ \sum_{m=1}^r  |\delta_t \emedio_h -
\delta_t {\bf e}_h^m|^2+
 \frac{k}2\, \sum_{m=1}^r \| \delta_t \etilde_h  \|^2
 \\ && \le
 C\Big(|\delta_t {\bf e}_h^1 |^2 + \frac{k}{2} \, \| \delta_t
 \widetilde{\bf e}^1 \| ^2
 + | k \, \nabla \delta_t e_{p,h}^1 | ^2 +    h^2\Big),\end{eqnarray*}
hence  (\ref{error-delta_t-space}) holds by using the hypotheses on the first step \eqref{first-step}.
\cqfd

%

\begin{corollary} \label{dt5}
Assuming hypotheses of  Theorem \ref{dt2}, the following error
estimates hold
$$
\| \etilde_h \| _{l^{\infty}({H^1})} \le C\, h
\quad \hbox{and} \quad 
  \| e_{p,h}^{m+1} \| _{l^{\infty}(L^2)} \le C\,h  .
$$
\end{corollary}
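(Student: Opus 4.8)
The plan is to transpose to the fully discrete level the three-step argument used for the time-discrete scheme in Theorem~\ref{cor1}: I work with the spatial error problem $(E_3)^{m+1}_h$ and the discrete \emph{inf-sup} condition (\ref{cinfsup}) in place of their continuous analogues, and I feed in the fully discrete estimates of Theorems~\ref{dt1}--\ref{dt2} together with the regularity of the time-discrete solution collected in Remark~\ref{reg-fuerte}, Theorem~\ref{lv12} and Lemma~\ref{reg-delta-h2}. As in Section~2, all velocity errors are split into a discrete and an interpolation part.

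\textbf{Step 1 ($l^2(L^2)$-bound for the pressure error).} First I would regard $(E_3)^{m+1}_h$ as an identity for $(e_{p,h}^{m+1},\nabla\cdot{\bf v}_h)=-(\nabla e_{p,h}^{m+1},{\bf v}_h)$, so that (\ref{cinfsup}) bounds $|e_{p,h}^{m+1}|$ by $C$ times the sum of $|\delta_t{\bf e}_h^{m+1}|$, $\|\etilde_h\|$, the dual norm on ${\bf Y}_h$ of the functional ${\bf NL}_h^{m+1}$, and the three interpolation quantities $\|e_i(\delta_t\utilde)\|_{{\bf H}^{-1}}$, $\|\etilde_i\|$, $\|2\,e_{p,i}^m-e_{p,i}^{m-1}\|$; here $|{\bf NL}_h^{m+1}({\bf v}_h)|\le C(\|\widetilde{\bf e}_d^m\|+\|\emedio_d\|)\|{\bf v}_h\|$ follows from the estimates on $c(\cdot,\cdot,\cdot)$ and the bounds $\|\utilde\|_{{\bf H}^2}\le C$, $\|\widetilde{\bf u}_h^m\|_{{\bf L}^3}\le C$ (Remark~\ref{reg-fuerte}, (\ref{uniform-estimate-H1}) and the stability estimates). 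Splitting $\widetilde{\bf e}_d^m$ and $\emedio_d$ into their discrete and interpolation parts and invoking $\|\etilde_h\|_{l^2({\bf H}^1)}\le C\,h$, $|\delta_t{\bf e}_h^{m+1}|\le C\,h$ (Theorems~\ref{dt1}--\ref{dt2}), the interpolation estimates $\|\etilde_i\|\le C\,h\,\|\utilde\|_{{\bf H}^2}$, $\|e_i(\delta_t\utilde)\|_{{\bf H}^{-1}}\le C\,h\,|\delta_t\utilde|$, $\|e_{p,i}^m\|\le C\,h\,\|p^m\|_{H^1}$, and the uniform bounds $\|\utilde\|_{{\bf H}^2}$, $|\delta_t\utilde|$, $\|p^m\|_{H^1}\le C$, the right-hand side is $O(h)$ in the $l^2$-in-time norm; hence $\|e_{p,h}^{m+1}\|_{l^2(L^2)}\le C\,h$. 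Only an $l^2$-bound appears here because at this stage $\|\etilde_h\|$ is controlled only in $l^2({\bf H}^1)$.

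\textbf{Step 2 ($l^\infty({\bf H}^1)$-bound for $\etilde_h$).} Next I would test $(E_3)^{m+1}_h$ with ${\bf v}_h=2k\,\delta_t\etilde_h\in{\bf Y}_h$. The diffusion term yields the telescoping identity $2k(\nabla\etilde_h,\nabla\delta_t\etilde_h)=|\nabla\etilde_h|^2-|\nabla\widetilde{\bf e}_h^m|^2+|\nabla\etilde_h-\nabla\widetilde{\bf e}_h^m|^2$; the discrete-time-difference term equals $2k(\delta_t{\bf e}_h^{m+1},\delta_t\etilde_h)$; the pressure term, integrated by parts, is $-2k(e_{p,h}^{m+1},\nabla\cdot\delta_t\etilde_h)$; the convective term ${\bf NL}_h^{m+1}(2k\,\delta_t\etilde_h)$ is controlled by skew-symmetry of $c(\cdot,\cdot,\cdot)$ with $|\delta_t\etilde_h|\le C\,h$, $\|\utilde\|_{{\bf W}^{1,3}\cap{\bf L}^\infty}\le C$, $\|\widetilde{\bf u}_h^m\|_{{\bf L}^3}\le C$; and the three interpolation terms are bounded directly. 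After Young's inequality each right-hand side contribution is of the form $\varepsilon\,k\,\|\delta_t\etilde_h\|^2$ plus terms $C\,k\,(|\delta_t{\bf e}_h^{m+1}|^2+|e_{p,h}^{m+1}|^2+\|\widetilde{\bf e}_d^m\|^2+\|\emedio_d\|^2)+C\,k\,h^2(\|\utilde\|_{{\bf H}^2}^2+\|\delta_t\utilde\|_{{\bf H}^1}^2+\|p^m\|_{H^1}^2)$. Summing over $m$, from the first regular step up to $r$ (the initial steps being absorbed by the hypotheses of Theorem~\ref{dt2}), the diffusion term telescopes and leaves $|\nabla\widetilde{\bf e}_h^{r+1}|^2$ plus a nonnegative sum on the left, while every sum on the right is $O(h^2)$: $k\sum\|\delta_t\etilde_h\|^2=\|\delta_t\etilde_h\|_{l^2({\bf H}^1)}^2\le C\,h^2$ and $k\sum|\delta_t{\bf e}_h^{m+1}|^2\le C\,h^2$ (Theorem~\ref{dt2}), $k\sum|e_{p,h}^{m+1}|^2\le C\,h^2$ (Step~1), $k\sum(\|\widetilde{\bf e}_d^m\|^2+\|\emedio_d\|^2)\le C\,h^2$ (Theorem~\ref{dt1}), and $k\sum(\|\utilde\|_{{\bf H}^2}^2+\|\delta_t\utilde\|_{{\bf H}^1}^2)\le C$ (Remark~\ref{reg-fuerte}, Theorem~\ref{lv12}); notably no discrete Gronwall step is needed. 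Since $\|\etilde_h\|=O(h)$ at the first step, this gives $\|\etilde_h\|_{l^\infty({\bf H}^1)}\le C\,h$.

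\textbf{Step 3 ($l^\infty(L^2)$-bound for the pressure error).} Finally I would repeat the \emph{inf-sup} estimate of Step~1, now with every term on its right $O(h)$ \emph{uniformly in $m$}: $|\delta_t{\bf e}_h^{m+1}|\le C\,h$ (Theorem~\ref{dt2}), $\|\etilde_h\|\le C\,h$ (Step~2) and hence $\|\widetilde{\bf e}_d^m\|$, $\|\emedio_d\|\le C\,h$, and the interpolation contributions $O(h)$ in $l^\infty$ exactly as before; this gives $\beta\,|e_{p,h}^{m+1}|\le C\,h$ for every $m$, which is the assertion (and, with $\|e_{p,i}^{m+1}\|\le C\,h$, also $\|e_{p,d}^{m+1}\|_{l^\infty(L^2)}\le C\,h$). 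I expect Step~2 to be the main obstacle: the $\|\delta_t\etilde_h\|$-contributions generated by the pressure term, the convective term and the gradient interpolation error cannot be absorbed into the left-hand side — the factor $k$ rules that out without a lower bound on $k$ — so they must instead be summed and dominated by the already proved estimate $\|\delta_t\etilde_h\|_{l^2({\bf H}^1)}\le C\,h$ of Theorem~\ref{dt2}, which is precisely the mechanism (inherited from the time-discrete Theorem~\ref{cor1}) that allows the estimate to close with no constraint relating $h$ and $k$.
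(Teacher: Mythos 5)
Your three-step plan coincides with the paper's own proof: the discrete \emph{inf-sup} condition applied to $(E_3)^{m+1}_h$ gives first the $l^2(L^2)$ and then (after Step 2) the $l^\infty(L^2)$ pressure bound, and testing $(E_3)^{m+1}_h$ with $2k\,\delta_t\etilde_h$, summing, and dominating the $\|\delta_t\etilde_h\|^2$-contributions by the $l^2({\bf H}^1)$ estimate of Theorem~\ref{dt2} (no Gronwall, no $h$--$k$ constraint) is exactly the paper's Step 2. The only cosmetic slip is the reference to skew-symmetry when bounding ${\bf NL}_h^{m+1}(2k\,\delta_t\etilde_h)$ — with test function $\delta_t\etilde_h$ neither convective term is skew-symmetric, and the paper simply bounds them directly via the uniform bounds and Young's inequality — but this does not affect the argument.
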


\noindent {\bf Proof:}  We divide the proof into three steps:

\noindent{\bf Step 1}. To obtain 
\begin{equation}\label{est-L2-presion}
 \| e_{p,h}^{m+1} \|_{l^2(L^2)} \le C\, h.
\end{equation}
 Arguing as in the time discrete Algorithm~\ref{alg-time}, from
the discrete
 inf-sup condition applied to $ (E_3 )_h ^{m+1}$ and the estimates 
$\| \etilde_h \|_{l^2 (H^1)} \le C\, h$ and $\| \delta_t \ee_h
\|_{l^{\infty}(L^2)} \le C\, h $, we have \eqref{est-L2-presion}.

\noindent{\bf Step 2}. To prove \begin{equation}\label{reg-etilde-Linfty}
\|\etilde_h  \|_{l^{\infty} (H^1)} \le
  C\, h .
\end{equation}
By multiplying  $(E_3)_h^{m+1}$ by $2\,k\,
\delta _t \etilde_h $:
$$ \, |\nabla \etilde_h |^2
 - | \nabla \widetilde{\bf e}_h^{m}|^2 +  |\nabla  \etilde_h -
 \nabla \widetilde{\bf e}_h^{m}|^2 =
-2\, k \Big(\nabla e_{p,h} ^{m+1} +  \delta_t {\bf e}_h^{m+1} + \delta_t {\bf e}_i^{m+1}  ,  \delta _t \etilde_h   \Big)
$$
$$-2\,
k\, \Big(\nabla  \etilde _i , \nabla\delta_t \etilde_h\Big) -2\, k \,(\nabla (2\,
e_{p,i}^m -e_{p,i}^{m-1} ) , \delta_t \etilde _h )+ 2\, k\,
{\bf NL}_h^{m+1}( \delta _t \etilde_h  ).
$$
Then, we obtain
$$  |\nabla \etilde_h|^2 -
| \nabla \widetilde{\bf e}_h^{m}|^2 +  |\nabla  \etilde_h -
 \nabla \widetilde{\bf e}_h^{m}|^2 \le 2\, k \,
|e_{p,h} ^{m+1} |^2
$$
$$+ C\, k \,  |\nabla \cdot \delta_t \etilde_h|^2 +C\,k\,
|\delta_t {\bf e}_h ^{m+1} |^2 +C\,k\, |\delta_t\widetilde{\bf
e}_h ^{m+1} |^2
$$
$$ +2\, k\, |e_i (\delta_t \uu)| ^2 + 2\, k\, \|\etilde_i\|^2
  +C\, k \, (| e_{p,i} ^{m} |^2 + |e_{p,i} ^{m-1} |^2)
  +2\, k\,  {\bf NL}_h^{m+1}(  \delta_t \etilde_h)
$$
$$   \le 2\, k \, |e_{p,h} ^{m+1} |^2  + C\, k \,  |\nabla \cdot \delta_t \etilde_h|^2 +C\,k\,
|\delta_t {\bf e}_h ^{m+1} |^2 +C\,k\, |\delta_t \widetilde{\bf
e}_h ^{m+1} |^2
$$
$$  + C\, k\, h^2 \|\delta_t \uu\|^2 +
 C\, k\, h^2 \|\utilde \|_{H^2} ^2
  + C\,k\, h^2  (\|p^{m} \|^2+\|p^{m-1} \|^2)
  +2\, k\, {\bf NL}_h^{m+1}(  \delta_t \etilde_h) .
$$
Taking
into account (\ref{uniform-estimate-H1}), we bound  the last term of the RHS as follows, 
$$2\, k\, {\bf NL}_h^{m+1}(  \delta_t \etilde_h)\le \varepsilon
  \, k
  \|  \delta_t \etilde_h \|^2 + C\, k\, \| \etilde_h \| ^2+ C\, k \, \|
  \widetilde{\bf e}_h^m \|^2 + C\, k\, \| \etilde_i \| ^2+ C\, k \|
  \widetilde{\bf e}_i^m \|^2
$$
$$ \le  \varepsilon \, k
  \|  \delta_t \etilde_h \|^2 + C\, k\, \| \etilde_h \| ^2+ C\, k \,\|
  \widetilde{\bf e}_h^m \|^2 +C\, k\, h^2 
$$
hence, we arrive at
$$
  |\nabla \etilde_h|^2 -| \nabla \widetilde {\bf e}_h^m |^2 + |\nabla  \etilde_h -
 \nabla \widetilde{\bf e}_h^{m}|^2 \le
  k \,
|e_{p,h} ^{m+1} |^2
 +k  |\nabla \cdot \delta_t \etilde_h|^2
  +C\, k\,
|\delta_t {\bf e} ^{m+1}_h |^2 
$$
$$+ k | \nabla \, \delta_t \etilde_h
| ^2  + C\, k\, | \delta _t  \etilde_h  | ^2 +  C\, k\, \|
\etilde_h \| ^2+ C\, k \|
  \widetilde{\bf e}_h^m \|^2 + C\, k\, h^2   + C\, k\, h^2 \|\delta_t \uu\|^2  $$
Adding from $m=0$ to $r$,
$$ |\nabla \widetilde{\bf e}_h^{r+1}|^2
\le |\nabla \widetilde{\bf e}_h^0 |^2+ C\, k \sum_{m=0} ^r
|e_{p,h} ^{m+1} |^2 +C\, k \sum_{m=0} ^r |\nabla \, \delta_t
\etilde_h|^2 + C\, k\, \sum_{m=0} ^r |\delta_t {\bf e} ^{m+1}_h
|^2 + C\, h^2  . $$ Then, by applying (\ref{est-L2-presion}) and  the estimates  obtained  in Theorems \ref{dt1} and \ref{dt2}, we obtain \eqref{reg-etilde-Linfty}.
 
\noindent {\bf Step 3}. To obtain $\| e_{p,h}^{m+1} \|_{l^{\infty}(L^2)} \le C\, h$.

Finally, by using again  the discrete inf-sup  condition \eqref{cinfsup} and
  taking into account (\ref{reg-etilde-Linfty}), one has $\| e_{p,h}^{m+1} \|_{l^{\infty}(L^2)} \le C\, h$ and the proof is finished.
 \cqfd

\begin{remark} \label{re:total-order}
By combining  Theorem  \ref{cor1} and Corollary \ref{dt5},  the
following error estimate for the total  error holds
$$
\| {\bf u}(t_{m+1}) - \utilde_h \| _{l^{\infty}(H^1)}+ \| p(t_{m+1}) - p_h^{m+1} \|_{l^{\infty}(L^2)}
\leq C\,(k+ h).
$$

\end{remark}
\section{Numerical Simulations}
We  consider the FE approximation $P_2 \times P_1$  related to a structured mesh of the domain $\Omega =(0,1)^2 \subset \R^2  $. 

 The numerical results have been obtained using the software  FreeFem++ (\cite{ff++,hecht}), and show 
 first order accurate in time for velocity and pressure of the segregated version of  Algorithm~\ref{algo-fully} given in Remark~\ref{segr-Al-2}. These results are agree to  Remark~\ref{re:total-order}.

In fact, we present some numerical error orders  in time for velocity ${\bf u}=(u_1,u_2)$ and pressure $p$ using the following exact solution for $(P)$:
$$ {\bf u}=e^{-t} \Big ( \begin{array}{c}
                 (\cos ( 2 \, \pi \, x) -1) \, \sin (2 \, \pi \, y)\\
 - (\cos (2 \, \pi \, y) -1) \, \sin( 2 \, \pi \, x)
 \end{array}
\Big)
\quad\hbox{and}\quad p= 2\, \pi \, e^{-t} \, (\sin (2 \, \pi \, x)
+\sin ( 2 \, \pi \, y)).
$$
We take $\nu=1$ and adjust the force ${\bf f}$ to enforce this exact solution.

Note that $\nabla\cdot{\bf u}=0$ in $\Omega$, ${\bf u}|_{\partial\Omega}=0$ and $\int_\Omega p =0$. On the other hand, we have choice this regular exact solution such that $\nabla p\cdot {\bf n}\not=0$ on the boundary $\partial\Omega$, in order to measure the effect of the numerical boundary condition $\nabla (p^{n+1}-p^n)\cdot {\bf n}=0$ on  $\partial\Omega$. 
 We approach numerically the order in time for the
 segregated version of Algorithm~\ref{algo-fully} given in Remark~\ref{segr-Al-2}, comparing to  other  current  first order  splitting schemes like, rotational pressure-correction, consistent splitting  and   penalty-projection schemes, also implemented  in  they segregated form.

Some numerical analysis results and   computational simulations  can be seen in
\cite{guermond-shen} and  \cite{guermond-minev-shen} for the
rotational pressure-correction projection scheme, in
\cite{guermond-shen-consistent}, 
\cite{guermond-minev-shen} and \cite{shen-yang} for the  consistent
splitting  scheme and in  \cite{jobelin}, \cite{angot-1} and 
\cite{angot-2} for the penalty-projection scheme. 

Concretely, let $ {\bf u}_h^m\in {\bf Y}_h$, $\Pi_h( \nabla \cdot {\bf u}^m)\in Q_h$ and $p_h^{m-1},p_h^m\in Q_h$ be given,  where $\Pi_h$ is the  $L^2 (\Omega)$-projector operator onto the discrete pressure space $Q_h$, the implemented segregated  schemes   are:
\begin{itemize}
\item 
The {\it rotational pressure-correction} scheme:
\begin{description}
\item (a) Find ${\bf u}_h^{m+1} \in {\bf Y}_h $ such that  $ \forall \,{\bf v}_h \in
{\bf Y}_h $,
$$ \Big ( \frac{{\bf u}_h^{m+1} - {\bf u}_h^m}{k}, {\bf v}_h \Big )
+ c\Big( {\bf u}_h^{m}, {\bf u}_h^{m+1},  {\bf
v}_h\Big)
 +  \nu\Big(\nabla {\bf u}_h^{m+1},  \nabla  {\bf v}_h\Big) -
 \Big(  q^m_h ,   \nabla\cdot{\bf v}_h\Big)  =
\Big({\bf f}^{m+1},  {\bf v}_h\Big).
$$
where 
\begin{equation}\label{potent-aux}
q^m_h=2 p_h ^{m} -p_h ^{m-1}+\nu \,\Pi_h( \nabla \cdot {\bf u}^m)\in Q_h .
\end{equation}
\item (b) Compute $\Pi_h(\nabla \cdot {\bf u}_h^{m+1})$.
  \item (c) Find  $p_h^{m+1}\in Q_h$ such that
\begin{equation}\label{press-increm}
k \Big(\nabla\,(p_h^{m+1} -  p_h^m + \nu \, \Pi_h(\nabla \cdot {\bf u}_h^{m+1})) ,\nabla q_h \Big)= - \Big(\nabla \cdot
{\bf u}_h^{m+1},q_h \Big)  \quad \forall\, q_h\in Q_h
\end{equation}
\end{description}
\item
The {\it consistent splitting}  scheme:
\begin{description}
\item (a) Find ${\bf u}_h^{m+1} \in {\bf Y}_h $ such that $ \forall \,{\bf v}_h \in
{\bf Y}_h $,
 $$ \Big ( \frac{{\bf u}_h^{m+1}-{\bf u}_h^m}{k}, {\bf v}_h \Big ) 
 + c\Big( {\bf u}_h^{m}, {\bf u}_h^{m+1},  {\bf v}_h\Big)
 + \nu  \Big(\nabla  {\bf u}_h^{m+1},\nabla \, {\bf v}_h \Big) -\Big(p_h^m , \nabla \cdot {\bf v}_h \Big) = \Big({\bf f}^{m+1},{\bf v}_h \Big  )$$
 \item (b) Compute $\Pi_h(\nabla \cdot {\bf u}_h^{m+1}))$.
 \item (c) Find  $p_h^{m+1}\in Q_h$ such that
$$\Big(\nabla \, (p_h^{m+1} -  p_h^m + \nu \, \Pi_h(\nabla \cdot {\bf u}_h^{m+1})) , \nabla  q_h \Big)= \Big ( \frac{{\bf u}_h^{m+1}-{\bf u}_h^m}{k}, \nabla q_h\Big ) \quad \forall\,q_h \in Q_h.$$
\end{description}
\item
The  {\it penalty pressure-projection} scheme: 
\begin{description}
\item (a) Find ${\bf u}_h^{m+1} \in {\bf Y}_h $ such that,  $ \forall \,{\bf v}_h \in
{\bf Y}_h $,
$$ \Big ( \frac{{\bf u}_h^{m+1} - {\bf u}_h^m}{k}, {\bf v}_h \Big )
+ c\Big( {\bf u}_h^{m}, {\bf u}_h^{m+1},  {\bf v}_h\Big)
 +\nu  \Big(\nabla {\bf u}_h^{m+1},  \nabla {\bf v}_h\Big) +\nu    \Big(\nabla \cdot   {\bf u}_h^{m+1},  \nabla \cdot\, {\bf v}_h\Big) 
$$
$$ - \Big(  q^m_h ,  \nabla\cdot {\bf v}_h\Big)
 =\Big({\bf f}^{m+1},  {\bf v}_h\Big).
$$
where $q^m_h$ is given as in \eqref{potent-aux}. Note that this scheme is not fully segregated because it  couples the velocity components in the term $\nu    \Big(\nabla \cdot   {\bf u}_h^{m+1},  \nabla \cdot\, {\bf v}_h\Big) $.
\item (b) Compute $\Pi_h(\nabla \cdot {\bf u}_h^{m+1}))$.
\item (c) Find  $p^{m+1}\in Q_h$ solving the Poissson-Neumann problem 
\eqref{press-increm}.
\end{description}
\end{itemize}
We consider the structured mesh taking $70$  subintervals in $[0,1]$ (with  $h=0.0142857$).  In addition,  $k=0.2, \ 0.1, \ 0.05$ and
$0.025$ are considered corresponding to $10, 20, 40$ and $80$ time iterations in the  time interval $[0,2]$.  

The numerical results comparing the time accuracy can be seen  in Tables~\ref{fig:error-order-Incremental}, \ref{fig:error-order-Rotational}, \ref{fig:error-order-Consistent} and \ref{fig:error-order-penalty}, showing  a little better accuracy   in velocity and  pressure  for the
incremental scheme Algorithm~\ref{algo-fully}. Moreover,  
first  order accurate in time for velocity and pressure is observed for all previous schemes.   







\begin{table}
\begin{tabularx}{\textwidth}{|X|X|X|X|}
\hline
 $k$   & $0.2-0.1$ &  $0.1- 0.05$ & $0.05$-   $ 0.025$
\\ \hline\hline
$\|u_1\|_{l^{\infty}(L^2)} $ &  1.077  & 1.326  & 1.582
\\ \hline
$\|u_1\|_{l^{\infty}(H^1)}$ &  0.812  & 1.146  & 1.453 
\\ \hline
$\|u_2\|_{l^{\infty}( L^2)}$ & 1.095  & 1.352  & 1.585 
\\ \hline
$\|u_2\|_{l^{\infty}( H^1)}$ & 0.817  & 1.148  & 1.457 
\\ \hline
$\|p\|_{l^2(L^2)} $ & 0.877  & 1.282  & 1.535 
\\ \hline
$\|p\|_{l^{\infty}(L^2)} $ & 0.880  & 1.157  & 1.444
\\ \hline
 \end{tabularx}
\caption{\textit{} Error orders in time for Algorithm~\ref{algo-fully}}
 \label{fig:error-order-Incremental}
 \end{table}
%


\begin{table}
\begin{tabularx}{\textwidth}{|X|X|X|X|}
\hline
 $k$   & $0.2-0.1$ &  $0.1- 0.05$ & $0.05$-   $ 0.025$
\\ \hline\hline
$\|u_1\|_{l^{\infty}(L^2)} $ & 1.048  & 1.278  & 1.475 
\\ \hline
$\|u_1\|_{l^{\infty}(H^1)}$ & 0.955  & 1.150  & 1.290
\\ \hline
$\|u_2\|_{l^{\infty}( L^2)}$ &  1.105  & 1.314  & 1.511 
\\ \hline
$\|u_2\|_{l^{\infty}( H^1)}$ & 1.035  & 1.176  & 1.311 
\\ \hline
$\|p\|_{l^2(L^2)} $ & 1.241  & 1.436  & 1.490 
\\ \hline
$\|p\|_{l^{\infty}(L^2)} $ & 1.012  & 1.238  & 1.361 
\\ \hline
 \end{tabularx}
\caption{\textit{} Error orders in time for  Rotational Scheme }
 \label{fig:error-order-Rotational}
 \end{table}

\begin{table}
\begin{tabularx}{\textwidth}{|X|X|X|X|}
\hline
 $k$   & $0.2-0.1$ &  $0.1- 0.05$ & $0.05$-   $ 0.025$
\\ \hline\hline
$\|u_1\|_{l^{\infty}(L^2)} $ &  0.726  & 0.814  & 0.885 
\\ \hline
$\|u_1\|_{l^{\infty}(H^1)}$ & 0.715  & 0.813  & 0.885
\\ \hline
$\|u_2\|_{l^{\infty}( L^2)}$ & 0.764  & 0.843  & 0.905
\\ \hline
$\|u_2\|_{l^{\infty}( H^1)}$ &  0.775  & 0.841  & 0.908 
\\ \hline
$\|p\|_{l^2(L^2)} $ & 0.822  & 0.906  & 0.952 
\\ \hline
$\|p\|_{l^{\infty}(L^2)} $ & 0.700  & 0.792  & 0.868
\\ \hline
 \end{tabularx}
\caption{\textit{} Error orders in time for  Consistent Scheme }
 \label{fig:error-order-Consistent}
 \end{table}
\begin{table}
\begin{tabularx}{\textwidth}{|X|X|X|X|}
\hline
 $k$   & $0.2-0.1$ &  $0.1- 0.05$ & $0.05$-   $ 0.025$
\\ \hline\hline
$\|u_1\|_{l^{\infty}(L^2)} $ & 0.983  & 1.256  & 1.459 
\\ \hline
$\|u_1\|_{l^{\infty}(H^1)}$ & 0.903  & 1.120  & 1.266
\\ \hline
$\|u_2\|_{l^{\infty}( L^2)}$ & 1.012  & 1.265  & 1.484 
\\ \hline
$\|u_2\|_{l^{\infty}( H^1)}$ & 0.942  & 1.135  & 1.282
\\ \hline
$\|p\|_{l^2(L^2)} $ & 1.161  & 1.354  & 1.429
\\ \hline
$\|p\|_{l^{\infty}(L^2)} $ & 0.937  & 1.172  & 1.324 
\\ \hline
 \end{tabularx}
\caption{\textit{} Error orders in time for   Penalty-Projection Scheme }
 \label{fig:error-order-penalty}
 \end{table}

With respect to the computational cost, the CPU time needed  taking $k=0.025$ ($80$ time iterations)  is shown in
Table~\ref{comput-const}, showing a little 
 lower cost in the incremental scheme Algorithm~\ref{algo-fully}. Note that in this scheme  the  problem related to  the  $L^2 (\Omega)$-projector $\Pi_h$ has  not to be  computed.
\begin{table}
\begin{tabularx}{\textwidth}{|X|X|X|X|X|}
\hline
Scheme: & Algorithm~\ref{algo-fully} & Rotational   & Consistent & Penalty 
\\ \hline
CPU-time (s) &  \mbox{2067.45} &   \mbox{2113.22} &\mbox{2079.4}  &  \mbox{2147.9} 
\\ \hline
 \end{tabularx}
\caption{Computational cost}
 \label{comput-const}
 \end{table}
%
\section{Conclusions}

The optimal error estimates of order $O(k+h)$ for the velocity and pressure are deduced  for the first-order linear fully
discrete  segregated scheme based on an incremental pressure
projection method (Algorithm~\ref{algo-fully}) approaching the $3D$ Navier-Stokes problem. This convergence is unconditional, i.e.~without imposing constraints on mesh size $h$ or time step $k$.

Moreover, some numerical computations of the segregated version of Algorithm~\ref{algo-fully} agree the previous numerical analysis are provided. These simulations are also compared with the segregated versions of the 
rotational, consistent and penalty-projection schemes, obtaining    a little better  accuracy in time and lower computational cost  of  Algorithm~\ref{algo-fully}.

Finally, although this segregated scheme has the numerical boundary layer furnished by the artificial boundary condition $\nabla(p ^{m+1}-p^m)\cdot {\bf n}$ on $\partial\Omega$, this fact does not perturb the optimal convergence in the energy norms ${\bf H}^1(\Omega)\times L^2(\Omega)$ for the velocity and pressure, respectively.

\end{document}